\documentclass[twoside,11pt]{amsart}

\usepackage[english]{babel}
\usepackage[utf8x]{inputenc}
\usepackage{url}
\usepackage{amsmath}
\usepackage{graphicx}
\usepackage[colorinlistoftodos]{todonotes}
\usepackage{amsmath,amscd,amsthm}
\usepackage{amstext, amsfonts, a4}
\usepackage{amssymb}
\usepackage{tikz-cd}
\usepackage{fancyhdr}
\usepackage{enumerate}
\usepackage{cleveref}
\usepackage{xcolor}

\numberwithin{equation}{section}
\newtheorem{thm}[equation]{Theorem}

\newtheorem{prop}[equation]{Proposition}
\newtheorem{cor}[equation]{Corollary}
\newtheorem{lem}[equation]{Lemma}

\theoremstyle{definition}

\newtheorem{ex}[equation]{Example}

\newtheorem{qu}[equation]{Question}

\newtheorem{rem}[equation]{Remark}

\renewcommand{\dim}{\operatorname{\mathsf{dim}}}
\renewcommand{\deg}{\operatorname{\mathsf{deg}}}
\renewcommand{\bmod}{\operatorname{\,\,\mathsf{mod}}\,\,}
\newcommand\ind{\operatorname{\mathsf{ind}}}

\newcommand\op{\operatorname{\mathsf{op}}}
\newcommand\End{\operatorname{\mathsf{End}}}
\newcommand\Br{\operatorname{\mathsf{Br}}}

\newcommand\id{\operatorname{\mathsf{id}}}

\newcommand\Nrd{\operatorname{\mathsf{Nrd}}}
\newcommand\Trd{\operatorname{\mathsf{Trd}}}
\newcommand\Tr{\operatorname{\mathsf{T}}}

\newcommand\hh{\operatorname{\mathbb{H}}}
\newcommand\N{\operatorname{N}}

\newcommand{\can}{\operatorname{\mathsf{can}}}

\newcommand{\vf}{\varphi}
\newcommand{\mg}[1]{{#1}^{\times}}
\newcommand{\sq}[1]{{#1}^{\times 2}}
\newcommand{\scg}[1]{\mg{#1}/\sq{#1}}
\newcommand{\s}{\sigma}
\newcommand{\nat}{\mathbb{N}}

\newcommand{\la}{\langle}
\newcommand{\ra}{\rangle}
\newcommand{\lla}{\la\!\la}
\newcommand{\rra}{\ra\!\ra}      

\newcommand{\an}{\mathsf{an}}

\renewcommand{\leq}{\leqslant}
\renewcommand{\geq}{\geqslant}

\newcommand{\tors}{\mathsf{tors}}

\newcommand\Ad{\operatorname{\mathsf{Ad}}}
\newcommand\ad{\operatorname{\mathsf{ad}}}

\newcommand{\W}{\mathsf{W}}
\newcommand{\I}{\mathsf{I}}
\newcommand{\rr}{\mathbb{R}}

\newcommand{\mc}{\mathcal}
\renewcommand{\N}{\mathsf{N}}

\newcommand{\disc}{\mathsf{disc}}
\newcommand{\Sim}{{\bf\mathsf{Sim}}}
\newcommand{\PSim}{{\bf\mathsf{PSim}}}
\newcommand{\G}{\mathsf{G}}
\renewcommand{\H}{\mathsf{H}}

\newcommand{\Hyp}{\mathsf{Hyp}}

\renewcommand{\setminus}{\smallsetminus}

\renewcommand{\deg}{\mathsf{deg}}
\renewcommand{\dim}{\mathsf{dim}}
\renewcommand{\leq}{\leqslant}
\renewcommand{\geq}{\geqslant}
\renewcommand{\sup}{\mathsf{sup}}

\newcommand{\D}{\mathsf{D}}
\newcommand{\Cl}{\mc{C}}

\newcommand{\wi}{\mathsf{i}}
\newcommand{\mf}{\mathfrak}

\newcommand{\zz}{\mathbb{Z}}
\newcommand{\cc}{\mathbb{C}}
\newcommand{\qq}{\mathbb{Q}}

\newcommand{\cd}{\mathsf{cd}}
\newcommand{\bil}{\mathsf{\mf{b}}}
\newcommand{\X}{\mathsf{X}}
\newcommand{\sos}[1]{\mathsf{\Sigma}{#1}^2}

\newcommand{\sign}{\mathsf{sign}}

\renewcommand{\bmod}{\,\mathsf{mod}\,}

\renewcommand{\setminus}{\smallsetminus}

\makeatletter
\newcommand{\bigperp}{%
  \mathop{\mathpalette\bigp@rp\relax}%
  \displaylimits
}

\newcommand{\bigp@rp}[2]{%
  \vcenter{
    \m@th\hbox{\scalebox{\ifx#1\displaystyle2.1\else1.5\fi}{$#1\perp$}}
  }%
}
\makeatother

\title{Similitudes over fields with $\I^4 =0$}

\date{25 February, 2026}

\author{M.~Archita}
\author{Karim Johannes Becher}

\address{University of Antwerp, Department of Mathematics, Antwerp, Belgium.}
\email{karimjohannes.becher@uantwerpen.be}
\email{archita.mondal@uantwerpen.be}

\begin{document}

\begin{abstract}
This article studies the set of $R$-equivalence classes of the group of proper projective similitudes of an algebra with involution of the first kind.
The main results concern base fields of characteristic different from $2$ over which every $9$-dimensional quadratic form has a nontrivial zero.
This includes function fields of $p$-adic curves and extensions of transcendence degree $3$ of $\cc$.
Main results of \cite{PS15} and \cite{PS17} are extended by relaxing the condition on the base field as well as on the Clifford invariant for orthogonal involutions.

\medskip\noindent
{\sc Keywords:} 
Classical adjoint algebraic group, $R$-equivalence, stably rational, algebra with involution, orthogonal, symplectic, hyperbolic, quadratic form, Pfister form, similarity factor, $2$-extension, reduced norm, cohomological dimension, $u$-invariant, field ordering, signature, weakly hyperbolic

\medskip\noindent
{\sc Classification (MSC 2020):} 11E04, 
11E57, 
11E81, 
14E08, 
20G15 
\end{abstract}

\maketitle

\section{Introduction} 

In this article we study adjoint semisimple classical linear algebraic groups of type $C$ and $D$ over a field $K$ of characteristic different from $2$. 
In view of Weil's classification results \cite{Wei61}, such a group is given as ${\bf PSim}^+(A,\s)$, the connected component of the identity in the group projective similitudes ${\bf PSim}(A,\s)$, for some central simple $K$-algebra $A$ of even degree and a $K$-linear involution $\s$ on $A$ which is symplectic (type $C$) or orthogonal (type $D$).

A finite field extension $L/K$ is called a \emph{$2$-extension} if $[L:K]=2^r$ for some $r\in\nat$ and there exists a sequence of intermediate fields $(K_i)_{i=0}^r$ such that $K_0=K$, $K_r=L$ and $K_i/K_{i-1}$ is a quadratic extension for $1\leq i\leq r$.

At the center of our work is the following general question.

\begin{qu}\label{Q:Merk-2-ext}
Which conditions on $(A,\s)$ and on $K$ imply that the group of similarity factors $\G(A,\s)$ is generated by $\sq{K}$ and the elements of $\mg{K}$ which are norms from finite $2$-extensions $L/K$ such that $\s_L$ is hyperbolic?
\end{qu}

By Merkurjev's criterion from \cite[Theorem~1]{Mer96}, a positive answer to \Cref{Q:Merk-2-ext} implies that the group of $K$-rational $R$-equivalence classes $${\bf PSim}^+(A,\s)(K)/R$$ is trivial.
See Section 3 for details.

The focus on $2$-extensions in  \Cref{Q:Merk-2-ext} is a central feature of this article.
Merkurjev's characterization does not impose this. 
It is motivated by the fact that we rely crucially on tools from classical quadratic form theory, where there is more control on $2$-extensions than on arbitrary finite extensions.
In this context we show that various results known under a given bound on the cohomological $2$-dimension remain valid with a milder hypothesis in terms of Pfister forms.

We denote by $\W K$ the Witt ring of quadratic forms over $K$ and by $\I K$ its fundamental ideal. 
For $n\in\nat$, we denote $\I^n K=(\I K)^n$, the $n$th power of $\I K$,  generated by the Witt equivalence classes of $n$-fold Pfister  forms over $K$.

Our main results concern fields $K$ where $\I^3K(\sqrt{-1})=0$ or 
$\I^4K= 0$.
In either of these situations, we achieve refinements and extensions of existing results. 

We will treat central simple algebras with involutions of the first kind (i.e.~trivial on the center). 
For orthogonal involutions, we restrict our attention to those of trivial discriminant, and we look at conditions on their Clifford algebra.
The behavior of orthogonal involutions of trivial Clifford invariant resembles that of symplectic involutions in that they have trivial cohomological invariants in degree $1$ and $2$.

In Section 5, we revisit positive results  \cite{KP08} and \cite{AP22} for orthogonal and symplectic involutions regarding \Cref{Q:Merk-2-ext}, assuming only that $\I^3K(\sqrt{-1})=0$.

In Section 6, we obtain some partial analogues under the assumption that $\I^{n+1}K(\sqrt{-1})=0$, $\ind A\leq 2$ and $(A,\s)$ has trivial cohomological invariants in degree less than $n$, for  some arbitrary $n\in\nat$.

For a $K$-quaternion algebra $Q$, we denote by $K_Q$ the function field of the projective conic $\mc{C}_Q$ associated to $Q$.
If $Q= (a,b)_K$ with $a,b\in\mg{K}$, then $\mc{C}_Q$ is given by $aX^2+bY^2=Z^2$.
When $A$ is Brauer equivalent to $Q$ and $\s$ is orthogonal, there is much control on $\s$ under scalar extension to $K_Q$.
Using this to reduce to the split case, we show in \Cref{ind2-main} that, whenever $\I^{n+1}K(\sqrt{-1})=0$, $A$ splits over every real closure of $K$ and $\ind A\leq 2$, then the answer to \Cref{Q:Merk-2-ext} is positive for $K$-linear involutions on $A$ having trivial cohomological invariants up to degree $n-1$.
It remains yet to be investigated whether this positive answer can be extended  by relaxing the conditions on $A$.

In Section 7, we study the case where $\I^4 K =0$.

\begin{qu}\label{Q:I4}
Assume that $\I^4 K=0$. If $\s$ is orthogonal, then assume that it has trivial discriminant.
Does it follow that ${\bf PSim}^+(A,\s)(K)/R=1$?
\end{qu}

In \Cref{u8-trivial-clifford}, we show that the answer is positive whenever $K$ has $u$-invariant at most $8$ and  $\s$ is  symplectic or has trivial Clifford invariant.
This was previously shown \cite{PS15} and \cite{PS17} in the special case where $K$ is the function field of a curve over a $p$-adic number field $\qq_p$ for an odd prime  $p$.
In \Cref{I4=0-Cliffindex2},
only assuming that $\I^4K=0$, we obtain a positive answer to \Cref{Q:I4} when $\s$ is orthogonal and both components of the Clifford algebra $\Cl(A,\s)$ have index at most $2$ (which implies that $\ind A\leq 4$).

In our exposition, we strive to present cases of orthogonal and symplectic involutions together whenever the same conclusion can be taken. We further circumvent case distinctions according to the parity of the rank of an associated hermitian form, which occur in \cite{PS15} and \cite{PS17}.
\newpage

\section{Quadratic forms and cohomological invariants} 

We follow standard terminology and notation from quadratic form theory, taking \cite{EKM08} and \cite{Lam05} as our main references.
By a \emph{quadratic form} we always mean a regular quadratic form.

Let $K$ be a field of characteristic different from $2$.
The nonzero squares in $K$ form the subgroup $\sq{K}$ of the multiplicative group $\mg{K}$, giving rise to square class group $\scg{K}$.

Consider a quadratic form $\vf$ over $K$.
It is considered as a homogeneous quadratic polynomial in a given number $n\in\nat$ of variables, where $n$ is called the \emph{dimension of $\vf$} and denoted $\dim(\vf)$. It gives rise to a quadratic map $K^n\to K$.
For a field extension $L/K$, we denote by $\vf_L$  the same polynomial considered with its evaluation map $L^n\to L$.
For $k\in\nat$, we denote by $k\times \vf$ the $k$-fold multiple $\vf\perp\dots\perp\vf$.
We denote by $\vf_\an$ the \emph{anisotropic part} and by $\wi(\vf)$ the 
\emph{Witt index} of $\vf$.
They are determined by having that $\vf_\an$ is anisotropic and 
$\vf\simeq \vf_\an\perp \wi(\vf)\times \hh$, where $\hh$ denotes the hyperbolic plane over $K$.
We call $\vf$ \emph{isotropic} if $\wi(\vf)>0$, and \emph{anisotropic} otherwise.
We call $\vf$ \emph{hyperbolic} if $\vf_\an$ is trivial, i.e. if $\vf\simeq i\times \hh$ for $i=\wi(\vf)$.
We denote by $\disc(\vf)$ the discriminant of $\vf$, considered as a class in $\scg{K}$.
We say that $\disc(\vf)$ is \emph{trivial} if $1\in\disc(\vf)$, that is, if $\disc(\vf)=1\sq{K}$. 
We denote by $\D(\vf)$ the set of elements of $\mg{K}$ that are represented by $\vf$, and we set 
$\G(\vf)=\{a\in\mg{K}\mid a\vf\simeq \vf\}$, which is a subgroup of $\mg{K}$.
If $\dim(\vf)=2$ we also call $\vf$ a \emph{binary form}.
For an ideal $J$ of the Witt ring $\W K$, we  write $\vf\in J$ to indicate that the Witt equivalence class of $\vf$ belongs to $J$.

For a central simple $K$-algebra $A$, we denote by $\deg A$  its \emph{degree} and by $\ind A$  its \emph{index}, and we call $A$ \emph{split} if $A\sim K$, or equivalently, if $\ind A=1$.
We denote by $\Br(K)$ the Brauer group of $K$ and by $\Br_2(K)$ its $2$-torsion part.
The elements of $\Br(K)$ are the classes $[A]$ of central simple $K$-algebras $A$.
The tensor product induces the group operation in $\Br(K)$, which we write additively, and we denote with neutral element $0=[K]$.
Given two central simple $K$-algebras $A$ and $A'$, we write $A\sim A'$ to indicate that $[A]=[A']$.

For the definition and the basic properties of the Clifford invariant of quadratic forms,  we refer to \cite[Chap.~V]{Lam05}. 
Let us recall what we shall need.
The Clifford invariant is a map
$$c : \W K \to \Br_2(K).$$
If $\vf$ is even-dimensional, then its Clifford algebra $\Cl(\vf)$ is a central simple $K$-algebra and $c(\vf)=[\Cl(\vf)]$.

\begin{thm}[Merkurjev]\label{Merkurjev}
      The restriction of $c$ to $\I^2 K$ gives a surjective group homomorphism $c:\I^2K\rightarrow \Br_2(K)$ whose its kernel is equal to $\I^3K$. 
\end{thm}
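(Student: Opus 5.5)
The statement bundles elementary facts with the deep content of Merkurjev's theorem on the norm residue homomorphism in degree $2$. I would separate the two and cite Merkurjev's theorem for the deep part rather than reprove it, e.g.\ in the form given in \cite{EKM08}: the map $k_2(K)=K_2(K)/2K_2(K)\to \Br_2(K)$, $\{a,b\}\mapsto [(a,b)_K]$, is an isomorphism.

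\emph{Homomorphism property.} Let $\vf,\psi$ be even-dimensional forms of trivial discriminant. The standard formula for the Clifford algebra of an orthogonal sum \cite[Chap.~V]{Lam05} gives $\Cl(\vf\perp\psi)\sim\Cl(\vf)\otimes\Cl(\psi)$; when the discriminant is nontrivial a twist by the discriminant enters, but it vanishes here. Moreover $\Cl(\hh)$ is split, and Witt equivalent forms in $\I^2K$ therefore have Brauer equivalent Clifford algebras. Since $\I^2K$ consists exactly of the Witt classes of even-dimensional forms of trivial discriminant, $c$ restricts to a group homomorphism $\I^2K\to\Br_2(K)$.

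\emph{The generators.} The additive group $\I^2K$ is generated by the $2$-fold Pfister forms $\lla a,b\rra$, and a direct computation gives
\[
c(\lla a,b\rra)=[(a,b)_K].
\]
For a $3$-fold Pfister form $\lla a,b,d\rra=\lla a,b\rra\perp -d\lla a,b\rra$, the formula for $c$ on a sum together with $c(-d\,\vf)=c(\vf)$ for $\vf\in\I^2K$ yields $c(\lla a,b,d\rra)=2[(a,b)_K]=0$. Since $\I^3K$ is generated by $3$-fold Pfister forms, this gives $\I^3K\subseteq\ker c$.

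\emph{Reduction to $k_2$.} Consider the surjection $e_2\colon k_2(K)\to \I^2K/\I^3K$, $\{a,b\}\mapsto \lla a,b\rra$. Its well-definedness is checked on the Steinberg relation: $\lla a,1-a\rra$ is hyperbolic. The composition $c\circ e_2$ is precisely the norm residue map above. By Merkurjev's theorem this composition is bijective, so $e_2$ is injective and $\bar c\colon \I^2K/\I^3K\to\Br_2(K)$ is an isomorphism. Both remaining claims follow:
\begin{itemize}
\item surjectivity holds because every class in $\Br_2(K)$ is a sum of quaternion classes;
\item the kernel is $\I^3K$ because $\bar c$ is injective.
\end{itemize}

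The main obstacle is the deep input itself, namely the injectivity of the norm residue map and the fact that $\Br_2(K)$ is generated by quaternion algebras. This is Merkurjev's theorem and will simply be cited. Everything else is routine bookkeeping with Pfister forms and the behaviour of Clifford algebras under orthogonal sums.
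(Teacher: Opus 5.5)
Your proposal is correct and takes the same approach as the paper, which simply cites Merkurjev \cite{Mer81}: you cite the same theorem in its $k_2$ form and spell out the standard translation to $\I^2K/\I^3K$ using Pfister generators and the sum formula for Clifford invariants. The one detail you skip is that well-definedness of $e_2$ also requires bilinearity modulo $\I^3K$, not just the Steinberg relation; this is routine, since $\lla a\rra+\lla a'\rra-\lla aa'\rra=\lla a,a'\rra$.
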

\begin{proof}
    See \cite{Mer81}.
\end{proof}

For $k\in\nat$, we denote by $\mu_k$ the group of $k$th roots of unity in an algebraic closure of $K$. 
Let $n\in\nat$. 
For an abelian group $A$ endowed with an action of the absolute Galois group $\mc{G}_K$ of $K$, we denote by $H^n(K,A)$ the $n$th cohomology group $H^n(\mc{G}_K,A)$.
Here, we consider the natural action of $\mc{G}_K$ on $\mu_k^{\otimes n-1}$.

For $n\in\nat$ and $a_1,\dots,a_n\in\mg{K}$, we denote by $\lla a_1,\dots,a_n\rra$ the $n$-fold Pfister form $\la 1,-a_1\ra\otimes\dots\otimes\la 1,-a_n\ra$ over $K$.

\begin{thm}[Orlov-Vishik-Voevodsky]
\label{OVV}
There is a natural surjective group homomorphism
$$e_n: \I^n K \to H^n(K,\mu_2)$$ 
whose kernel is equal to $\I^{n+1}K$ and
which maps the class of $\lla a_1,\dots,a_n\rra$ to the symbol $(a_1) \cup \cdots \cup (a_n)$, for any $a_1, \ldots, a_n \in \mg{K}$.
\end{thm}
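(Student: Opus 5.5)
This statement is the Orlov–Vishik–Voevodsky theorem, which resolved Milnor's conjecture on quadratic forms. The paper states it as a background result and cites it rather than proving it. So the plan below is an outline of the known proof, not a self-contained argument.

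\textbf{Construction and well-definedness of $e_n$.} Send the Pfister form $\lla a_1,\dots,a_n\rra$ to the symbol $(a_1)\cup\dots\cup(a_n)$, using the identification $H^1(K,\mu_2)\simeq\scg{K}$ from Kummer theory. Two facts are needed:
\begin{enumerate}[(i)]
\item The symbol depends only on the isometry class of the Pfister form. I would get this from chain equivalence of Pfister forms (Witt's chain lemma) together with the Steinberg relation $(a)\cup(1-a)=0$.
\item The map extends additively to the additive group $\I^nK$, with every element of $\I^{n+1}K$ going to $0$.
\end{enumerate}
Point (ii) is the substantive part. The standard route passes through Milnor $K$-theory:
\begin{enumerate}[(a)]
\item The map $k_n^M(K)=K_n^M(K)/2\to \I^nK/\I^{n+1}K$, sending $\{a_1,\dots,a_n\}$ to $\lla a_1,\dots,a_n\rra$, is well defined and surjective (Milnor, elementary).
\item The norm residue map $k_n^M(K)\to H^n(K,\mu_2)$ is an isomorphism. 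This is Voevodsky's theorem (the Milnor conjecture), proved with motivic cohomology, the motive of the Pfister quadric and Steenrod operations.
\item The map $k_n^M(K)\to \I^nK/\I^{n+1}K$ in (a) is an isomorphism. This is the Orlov–Vishik–Voevodsky theorem proper.
\end{enumerate}
Composing the inverse of (c) with (b) defines $e_n$ on $\I^nK/\I^{n+1}K$, and hence on $\I^nK$ with kernel containing $\I^{n+1}K$.

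\textbf{Surjectivity and kernel.} Surjectivity follows because $H^n(K,\mu_2)$ is generated by symbols, which is part of (b). For the kernel, the map $\I^nK/\I^{n+1}K\to H^n(K,\mu_2)$ is a composition of two isomorphisms, so the kernel of $e_n$ is exactly $\I^{n+1}K$.

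\textbf{Main obstacle.} The difficult step is the injectivity of $k_n^M(K)\to \I^nK/\I^{n+1}K$ in (c). Here one shows that if a form in $\I^n$ is anisotropic and $\dim<2^{n+1}$, then its motive detects a nonzero symbol. Equivalently, one needs the Arason–Pfister Hauptsatz, combined with the statement that the kernel of $k_n^M(K)\to k_n^M$ of the function field of a Pfister quadric is generated by the corresponding symbol. That kernel statement requires Voevodsky's motivic machinery. In the paper it is taken as a black box, citing \cite{OVV} together with Voevodsky's work.
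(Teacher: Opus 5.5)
Your outline is correct and matches the paper, which also only cites the result. Voevodsky's theorem \cite{Voe03} gives your step (b), and \cite[Theorem~4.1]{OVV07} (or alternatively \cite[Theorem~1.1]{Mor05}) gives the isomorphism $k_n^M(K)\to\I^nK/\I^{n+1}K$ of your step (c), composed exactly as you describe. Your remark that the motive of an anisotropic form in $\I^n$ of dimension $<2^{n+1}$ ``detects a nonzero symbol'' is a loose description: as your ``Equivalently'' sentence says, injectivity in \cite{OVV07} comes from the exact sequence for the kernel of $k_*^M(K)\to k_*^M(K(Q_\alpha))$, together with induction on the number of symbols and the Hauptsatz for a single symbol, but this does not matter for a cited result.
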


\begin{proof}
See \cite{Voe03} in combination with \cite[Theorem~4.1]{OVV07} or \cite[Theorem 1.1]{Mor05}.
\end{proof}

Note that $H^1(K,\mu_2)$ is naturally isomorphic to $\scg{K}$, and this isomorphism makes $e_1$ correspond to the discriminant.
Furthermore,
$H^2(K,\mu_2)$ can be naturally identified with $\Br_2(K)$ (cf.~\cite[Cor.~4.4.5]{GS17}), and this identifies $e_2$ with the map $c$ of \Cref{Merkurjev}.
We write $\nat^+$ for $\nat\setminus\{0\}$.

We denote by $\X (K)$ the set of all field orderings of $K$.
For $P\in\X (K)$, we denote by $K_P$ the corresponding real closure of $K$.
For a quadratic form $\vf$ over $K$, we denote by $\sign_P(\vf)$ its signature at an ordering $P\in\X(K)$.
We call the field $K$ \emph{real} if $\X(K)\neq \emptyset$, and \emph{nonreal} otherwise.

We denote by $\sos{K}$ the set of sums of squares in $K$.
By the Artin-Schreier Theorem, \cite[Chap.~VIII, Theorem 1.10]{Lam05}, $K$ is real if and only if $-1\notin\sos{K}$.

\begin{cor}\label{2tors-cohom}
Let $n\in\nat^+$ be such that $\I^n K(\sqrt{-1})=0$.
Then the natural homomorphism $H^n(K,\mu_2)\to\prod_{P\in\X (K)} H^n(K,\mu_{2})$ is injective. Furthermore, for any $r,d\in\nat$, we have $H^n(K,\mu_{2^r}^{\otimes d})= H^{n}(K,\mu_2)=(-1)\cup H^{n-1}(K,\mu_2)$.
\end{cor}
\begin{proof}
Since $\I^nK(\sqrt{-1})=0$, we have that $\I^nK=2\times\I^{n-1}K$, by \cite[Cor. 35.27]{EKM08}.
Hence, using the surjectivity of $e_n:\I^nK\to H^n(K,\mu_2)$, we obtain that  $H^{n}(K,\mu_2)=(-1)\cup H^{n-1}(K,\mu_2)$.

Let $r,d\in\nat$.
Consider $\xi\in\H^n(K,\mu_{2^r}^{\otimes d})$ and assume that $\xi\neq 0$.
Let $i\in\nat$ be minimal such that $2^i\xi=0$.
Then $i>0$. Let $\xi'=2^{i-1}\xi$. Then $\xi'\neq 0$ and $2\xi'=0$.
Therefore $\xi'\in H^n(K,\mu_2)\setminus\{0\}$, and it follows by \Cref{OVV} that $\xi'=e_n(\vf)$ for a quadratic form $\vf\in\I^n K\setminus \I^{n+1}K$.
As $\I^nK(\sqrt{-1})=0$ and $\vf\notin \I^{n+1}K$, it follows by \cite[Cor.~35.27]{EKM08} together with
\cite[Theorem 1]{Kru90} that there exists  $P\in\X (K)$ such that  $\sign_{P}(\vf)\notin 2^{n+1}K$.
Then $\xi'_{K_P}=e_n(\vf_{K_P})\neq 0$, and consequently $\xi_{K_P}\neq 0$.
This proves that the natural homomorphism 
$H^n(K,\mu_{2^r}^{\otimes d})\to \prod_{P\in\X (K)} H^n(K_P,\mu_{2^r}^{\otimes d})$ is injective.
Since for any real closed field $R$ we have $H^n(R,\mu_{2^r}^{\otimes d})=H^n(R,\mu_2)\simeq \zz/2\zz$, the statement follows.
\end{proof}

As a consequence of \Cref{OVV}, the cohomological $2$-dimension of $K$, $\cd_2(K)$, can be characterised as follows (see e.g.~\cite[3.4. Remark]{BDGMZ}):  
$$\cd_2(K)=\sup\{n\in\nat\mid \I^nK'\neq 0\mbox{ for some finite field extension }K'/K\}\,.$$

\begin{prop}\label{In-vanish2ext}
    Let $n\in\nat$ be such that $\I^n K=0$.
    Then $\I^nL=0$ for every finite $2$-extension $L/K$.
\end{prop}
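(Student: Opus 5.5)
By induction on the length of the tower of quadratic extensions defining the $2$-extension, it suffices to treat a single quadratic extension $L=K(\sqrt{a})$ with $a\in\mg{K}\setminus\sq{K}$. We then have to show that $\I^nK=0$ implies $\I^nL=0$. The case $n=0$ is trivial, since $\I^0K=\W K\neq 0$ always; so we may assume $n\geq 1$.

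The plan is to use the transfer (Scharlau trace) together with the exact sequence for quadratic extensions. Fix the $K$-linear functional $s:L\to K$ with $s(1)=0$ and $s(\sqrt a)=1$. Let $s_*:\W L\to\W K$ be the induced transfer. We have the exact sequence
$$\W K\xrightarrow{\;r\;}\W L\xrightarrow{\;s_*\;}\W K,$$
by \cite[Theorem~34.4]{EKM08} or \cite[Chap.~VII, Theorem~3.5]{Lam05}. By Frobenius reciprocity, $s_*(\vf_L\cdot\psi)=\vf\cdot s_*(\psi)$ for $\vf\in\W K$ and $\psi\in\W L$.

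The key input is the known filtered version of this sequence: $s_*(\I^nL)\subseteq \I^nK$ (cf.~\cite[Prop.~34.1 / Cor.~34.3]{EKM08}, the transfer respects the filtration up to a shift that can be controlled for quadratic extensions; alternatively Arason's result $s_*(\I^nL)\subseteq\I^nK$). This is the step I expect to be the main obstacle, since one must make sure that no shift in the filtration occurs. If this inclusion is not available, I would instead argue with generators. $\I^nL$ is generated by $n$-fold Pfister forms $\pi=\lla b_1,\dots,b_n\rra$ over $L$. One would then try to show that each such form, in the presence of $\I^nK=0$, is hyperbolic. This could go via the norm principle, or via cohomology: by \Cref{OVV}, $\I^nK=0$ means $H^n(K,\mu_2)=0$ and $\I^{n+1}K=0$. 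Corestriction and restriction give $\mathrm{cor}\circ\mathrm{res}=2$, but this is not enough by itself. So I would prefer the Witt-ring argument.

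Assuming $s_*(\I^nL)\subseteq\I^nK=0$, take $\psi\in\I^nL$. Exactness gives $\psi=\vf_L$ for some $\vf\in\W K$. To conclude $\psi=0$, I would use the refined exactness of $\I^{n-1}K\to\I^nL$-type statements. A simpler route is the following. For $\psi\in\I^nL$, the form $\lla a\rra_L\cdot$ is zero over $L$, so instead consider $\psi\cdot\la 1,\ldots\ra$. Alternatively, iterate: write $\psi=\pi\cdot\la b\ra\ldots$.

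Cleanest is induction on $n$ via the generators $\lla b\rra\otimes\pi'$ with $\pi'\in\I^{n-1}L$. Here $s_*(\lla b\rra\pi')=0$ gives $\lla b\rra\pi'=\vf_L$. Moreover $\vf$ can be chosen in $\I^nK$ by the filtered exact sequence of Arason–Elman–Jacob, so $\vf=0$ and the generator vanishes. Thus the main obstacle is to invoke the filtered exactness $\ker(s_*|_{\I^nL})=r(\I^nK)$ correctly. Once it holds, $\I^nK=0$ forces $\ker(s_*|_{\I^nL})=0$ together with $s_*(\I^nL)\subseteq\I^nK=0$, hence $\I^nL=0$.
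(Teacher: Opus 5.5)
Your final argument is valid, and its first step, reducing to a single quadratic extension $L=K(\sqrt a)$ by induction along the tower, is exactly the paper's. The difference is in the quadratic case. The paper simply cites \cite[Cor.~35.8]{EKM08}. You instead derive it from two inputs: Arason's inclusion $s_*(\I^nL)\subseteq\I^nK$ (this holds with no shift in the filtration, so that worry is unfounded) and exactness of $\I^nK\xrightarrow{r}\I^nL\xrightarrow{s_*}\I^nK$. Together these do give $\I^nL=\ker(s_*|_{\I^nL})=r(\I^nK)=0$.

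Be aware, though, that the second input already carries the whole content of the statement, and for arbitrary $n$ it is not elementary. One proves it from \Cref{OVV} together with Arason's exact sequence
\[
H^{m-1}(K,\mu_2)\xrightarrow{\cup(a)}H^m(K,\mu_2)\xrightarrow{\mathrm{res}}H^m(L,\mu_2)\xrightarrow{\mathrm{cor}}H^m(K,\mu_2).
\]
Write $\psi=\vf_L$ using exactness of $\W K\to\W L\to\W K$. Then use $\ker(\mathrm{res})=(a)\cup H^{m-1}(K,\mu_2)$ to move $\vf$ from $\I^mK+\lla a\rra\W K$ into $\I^{m+1}K+\lla a\rra\W K$, for each $m<n$. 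So you need either a precise reference or this derivation.

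Once these tools are admitted, the cohomological route you discarded is the shortest one. The relevant property is not $\mathrm{cor}\circ\mathrm{res}=2$ but exactness at $H^m(L,\mu_2)$. Indeed, $\I^nK=0$ gives $H^m(K,\mu_2)=e_m(\I^mK)=0$ for every $m\geq n$, hence $H^m(L,\mu_2)=0$ for every $m\geq n$. By \Cref{OVV} this means $\I^mL=\I^{m+1}L$ for every $m\geq n$, so $\I^nL\subseteq\bigcap_{m}\I^mL=0$ by the Arason--Pfister Hauptsatz.

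Finally, the fragments in the middle of your write-up should be deleted: they do not form an argument. This includes ``$\lla a\rra_L\cdot$ is zero over $L$'', ``write $\psi=\pi\cdot\la b\ra\ldots$'', and the announced induction on $n$, which is never used.
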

\begin{proof}
    See \cite[Cor.~35.8]{EKM08} for the case where $[L:K]=2$. The general case follows  immediatley by induction on  $r\in\nat$ such that $[L:K]=2^r$.
\end{proof}

\section{Algebras with involution, similitudes and norms} 

Our main reference for algebras with involution is \cite{KMRT98}.
For terminology and basic facts concerning central simple algebras, we refer to \cite{GS17} and \cite{Pie82}.

Let $A$ be a central simple $K$-algebra and let $\s$ be an involution on $A$.
We call $\s$ \emph{hyperbolic} if there exists an element $e\in A$ such that $e^2=e$ and $\s(e)=1-e$.

We restrict our attention to involutions \emph{of the first kind}, that is, which are $K$-linear.
By a \emph{$K$-algebra with involution} we mean a pair $(A,\s)$ where $A$ is a central simple $K$-algebra and $\s$ is a $K$-linear involution on $A$. 
Such involutions are distinguished into \emph{orthogonal} and \emph{symplectic} ones, see \cite[\S2]{KMRT98}.

Let $(A,\s)$ be a $K$-algebra with involution. 
We call $(A,\s)$ \emph{hyperbolic} if $\s$ is hyperbolic.
This can only occur when $\deg A$ is a multiple of $2\ind A$, so in particular $\deg A$ must be even.
We call $(A,\s)$ \emph{split hyperbolic} if $A$ is split and $\s$ is hyperbolic.
For a field extension $L/K$, we obtain an $L$-algebra with involution $(A,\s)_L=(A_L,\s_L)$, where $A_L=A\otimes_KL$ and $\s_L=\s\otimes\id_L$.

We denote $\Sim(A,\s)=\{x\in A\mid \s(x)x\in \mg{K}\}$ and obtain that this is a subgroup of $\mg{A}$. 
We 
denote by $\PSim(A,\s)$ the quotient group $\Sim(A,\s)/\mg{K}$.
The elements of $\Sim(A,\s)$ are called \emph{similitudes of $(A,\s)$}, those of $\PSim(A,\s)$ are called \emph{projective similitudes of $(A,\s)$}.

Letting 
$${\bf PSim}(A,\s)(K')=\PSim(A_{K'},\s_{K'})$$
for every field extension $K'/K$ defines an algebraic group over $K$ whose set of $K$-rational points is $\PSim(A,\s)$.
We denote by ${\bf PSim^{+}} (A, \sigma)$ be the connected component of the identity in 
${\bf PSim} (A,\sigma)$. 
This is a classical semisimple adjoint linear algebraic group.
We mention that ${\bf PSim}^{+}(A,\s)={\bf PSim}(A,\s)$ unless $\s$ is orthogonal and $\deg A$ is even, and in the latter case, ${\bf PSim}^+(A,\s)(K')$ has index $2$ in ${\bf PSim} (A,\s)(K')$ for every field extension $K'/K$. (See \cite[Prop. 12.23]{KMRT98}.)
We obtain a natural group homomorphism 
$$\mu:\Sim(A,\s)\to \mg{K}, x\mapsto \s(x)x\,.$$
Its image is denoted by $\G(A,\s)$. In other terms,
 $$\G(A,\s)=\{\s(x)x\mid x\in\Sim(A,\s)\}\,.$$ 
 The elements of this subgroup of $\mg{K}$ are called \emph{similarity factors of $\s$.}
We further set
$$\G^+(A,\s)=\{\mu(a)\mid a\mg{K}\in{\bf PSim}^+(A,\s)(K)\}\,.$$
This is a subgroup of $\G(A,\s)$ of index at most $2$.

Consider a quadratic form over $K$, given on the $K$-vector space $V_\vf$.
The split central simple $K$-algebra $\End_K(V_\vf)$ then is endowed with a $K$-linear involution $\ad_\vf$, called the \emph{adjoint involution on $\vf$}, which is characterized by the property that $\bil_\vf(x,f(y))=\bil_\vf(\ad_\vf(f)(x),y)$ for every $f\in\End_K(V_\vf)$ and every $x,y\in V_\vf$, where $\bil_\vf:V_\vf\times V_\vf\to K$ is the symmetric $K$-bilinear form associated to $\vf$ given by $\bil_\vf(x,y)=\vf(x+y)-\vf(x)-\vf(y)$.
We set
$$\Ad(\vf)=(\End_K(V_\vf),\ad_\vf)\,.$$
This is a split $K$-algebra with orthogonal involution.
By \cite[p.1, Theorem]{KMRT98}, every split $K$-algebra with orthogonal involution is isomorphic to $\Ad(\vf)$ for a quadratic form $\vf$ over $K$,
 and 
$\Ad(\vf)$ is hyperbolic if and only if the quadratic form $\vf$ is hyperbolic in the usual sense, by \cite[Prop.~6.7]{KMRT98}.

\begin{prop}\label{G-G+}
If $\s$ is orthogonal of nontrivial discriminant, $\ind A=2$ and $A_{K(\sqrt{d})}$ is split for $d\in\disc(\s)$, then $\G^+(A,\s)$ has index $2$ in $\G(A,\s)$.
In every other case $\G^+(A,\s)=\G(A,\s)$.
\end{prop}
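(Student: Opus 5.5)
The plan is to reduce everything to the description of ${\bf PSim}^+(A,\s)(K)$ inside $\PSim(A,\s)$ from \cite[Prop.~12.23 and \S13]{KMRT98}.

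If $\s$ is symplectic, or $\deg A$ is odd, then ${\bf PSim}^+(A,\s)={\bf PSim}(A,\s)$, and the definitions give $\G^+(A,\s)=\G(A,\s)$ at once. So we may assume that $\s$ is orthogonal and $\deg A=2m$.

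In this case the proper similitudes are the $x\in\Sim(A,\s)$ with $\Nrd_A(x)=\mu(x)^m$. Every similitude satisfies $\Nrd_A(x)=\pm\mu(x)^m$, and ${\bf PSim}^+(A,\s)(K)$ is the image of the group $\Sim^+(A,\s)$ of proper similitudes. Hence $\G^+(A,\s)=\mu(\Sim^+(A,\s))$. This subgroup has index at most $2$ in $\G(A,\s)$, and the index is $2$ exactly when no improper similitude has a similarity factor that is a factor of some proper similitude. Since $\mu(\mg{K})=\sq{K}$, this happens exactly when $\mu(x)\notin\G^+(A,\s)$ for some improper similitude $x$. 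Equivalently, it happens exactly when no improper similitude $y$ has $\mu(y)\in\sq{K}$. Indeed, if $x$ is improper and $x'$ is proper with $\mu(x)=\mu(x')$, then $y=x'^{-1}x$ is improper with $\mu(y)=1$. Conversely, such a $y$ gives $\mu(y)=\mu(y\lambda^{-1})$ with $\lambda\in\mg{K}$, and multiplying by $y$ interchanges the cosets.

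So the claim becomes the following. The group $\mathsf{O}(A,\s)$ of isometries contains an improper element, i.e.\ one of reduced norm $-1$, unless $\s$ has nontrivial discriminant, $\ind A=2$ and $A_{K(\sqrt d)}$ is split. In that exceptional case no improper similitude has square multiplier.

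For the first part I would use \cite[Prop.~12.23]{KMRT98}: an improper isometry exists if and only if $A$ contains a $\s$-symmetric element $u\in\mg{A}$ with $\s$-\emph{skew}... More concretely, improper similitudes exist exactly when there is $x\in\Sim(A,\s)$ with $\Nrd(x)=-\mu(x)^m$.
\begin{itemize}
\item If $A$ is split, then $\s=\ad_\vf$, and a reflection is an improper isometry.
\item If $\ind A\geq 4$, no improper similitudes exist at all, by \cite[Prop.~13.38 / 12.20]{KMRT98}, since $\Cl(A,\s)$ has nontrivial components. So $\G^+=\G$ trivially.
\end{itemize}

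So assume $\ind A=2$, $A\sim Q$, and write $\s$ as the adjoint of a skew-hermitian form $h$ over $(Q,\bar{\ }\,)$. Improper similitudes correspond to improper similitudes of $h$, and by \cite[Thm.~13.38]{KMRT98} such an improper similitude exists if and only if $Q$ is split by $K(\sqrt d)$, with $d\in\disc(\s)$.
\begin{itemize}
\item If $d$ is trivial, then $Q$ stays nonsplit over $K(\sqrt d)=K\times K$. Still, pure quaternions in a diagonalization of $h$ give quaternionic reflections of multiplier $\Nrd$ of a pure quaternion. I expect these to be improper with square multiplier after rescaling, giving $\G^+=\G$.
\item In the exceptional case, an improper similitude $x$ has $\mu(x)$ equal to a nonsquare. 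The key point is that $\mu(x)$ maps to the nontrivial class determined by $Q$ and $d$ in \cite[13.38]{KMRT98}, namely $\mu(x)$ must satisfy $(\mu(x),d)$-type conditions forcing $\mu(x)\notin\sq{K}$.
\end{itemize}

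The main obstacle is exactly this index-$2$ case analysis for quaternionic skew-hermitian forms: showing that improper isometries (with square multiplier) exist precisely outside the stated exceptional case. I would try to read it off from the exact sequence relating $\G(A,\s)/\G^+(A,\s)$ to the kernel of $\Br(K)\to\Br(K(\sqrt d))$ restricted to $\{0,[A]\}$ in \cite[\S13]{KMRT98}.
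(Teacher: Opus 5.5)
\textbf{There is a genuine gap: your reduction misstates what has to be proved in the nonsplit orthogonal case, and your plan for one subcase would fail.}

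Your treatment of the symplectic, odd-degree and split orthogonal cases agrees with the paper's: a hyperplane reflection is an improper isometry. Like the paper, you ultimately rely on \cite{KMRT98} for nonsplit $A$; the paper simply cites \cite[Cor.~13.43]{KMRT98}.

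The criterion ``the index is $2$ exactly when no improper similitude has square multiplier'' overlooks the case where there are no improper similitudes at all. In that case the condition holds vacuously, yet $\G^+(A,\s)=\G(A,\s)$. Your reformulation is therefore false, namely that an improper isometry exists unless one is in the exceptional case. The correct dichotomy is: $\G^+(A,\s)=\G(A,\s)$ if and only if $(A,\s)$ has either no improper similitude or an improper isometry. For nonsplit $A$ outside the exceptional case, it is the first alternative that holds. To see this, let $g$ be an improper similitude and $d\in\disc(\s)$:
\begin{enumerate}[$(1)$]
\item $g$ stays improper over $K(\sqrt d)$, where the discriminant becomes trivial.
\item The automorphism of the Clifford algebra induced by $g$ is nontrivial on its center, so it interchanges the two components and makes them isomorphic.
\item The relations of \cite[Theorem~9.12]{KMRT98} then force $A_{K(\sqrt d)}$ to be split.
\end{enumerate}
Your bullet for $\ind A\geq 4$ actually uses this logic, which contradicts your own reformulation.

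Consequently your plan for $\ind A=2$ with $\disc(\s)$ trivial cannot work. Take a quaternionic reflection $x\mapsto x-2vq^{-1}h(v,x)$ with $q=h(v,v)$. It is an isometry acting as $-1$ on $vQ$ and as the identity on $v^\perp$. So its reduced norm is $\Nrd_Q(-1)=1$, and it is proper. By the argument above, there is no improper similitude at all in this subcase; the criterion you quote yourself, with $K(\sqrt d)$ replaced by $K\times K$, says the same. So $\G^+(A,\s)=\G(A,\s)$ holds there because improper similitudes are absent, not for the reason you give.

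In the exceptional case you need two inputs that you do not supply:
\begin{enumerate}[$(a)$]
\item some improper similitude exists;
\item a nonsplit $A$ admits no improper isometry.
\end{enumerate}
Input $(b)$ is what your vague condition on $(\mu(x),d)$ has to deliver. For $\deg A=2$ both are visible. The improper similitudes are the elements anticommuting with a pure quaternion $u$ with $u^2\in\disc(\s)$. Their multipliers $b$ satisfy $A\simeq(u^2,b)_K$. Since $A$ is not split, $b\notin\N_{K(u)/K}^\ast=\G^+(A,\s)$. In general, these are exactly the facts the paper imports from \cite[Cor.~13.43]{KMRT98}.
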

\begin{proof}
See \cite[Prop.~12.23]{KMRT98} for the case where $\s$ is symplectic, \cite[Cor.~13.43]{KMRT98} for the case where $A$ is not split and $\s$ is orthogonal.
It remains to consider the case where $A$ is split and $\s$ is orthogonal. 
Then $(A,\s)\simeq\Ad(\vf)$ for a quadratic form $\vf$ over $K$.
If $\deg A$ is odd, then $\G(A,\s)=\G^+(A,\s)=\sq{K}$.
Assume that $\deg A$ is even. 
We choose a hyperplane reflection $a$ of $\vf$.
Then $a$ represents a class in ${\bf PSim}(A,\s)\setminus {\bf PSim}^{+}(A,\s)$.
Hence $\G(A,\s)=\G^+(A,\s)\cup \mu(a)\G^+(A,\s)$, and since $\deg A$ is even, we have $\mu(a)=1$.
\end{proof}

For a finite field extension $L/K$, we denote by $\N_{L/K}:L\to K$ the norm map, and we fix the notation $$\N_{L/K}^\ast=\N_{L/K}(\mg{L})\,,$$ and note that this is a subgroup of $\mg{K}$.

For a subset $S\subseteq\mg{K}$, we denote by $\la S\ra$ the subgroup of $\mg{K}$ generated by $S$.

Let $\Phi$ be a quadratic form over $K$ or a $K$-algebra with involution.
Hence, for any field extension $K'/K$, we have an object $\Phi_{K'}$ obtained by scalar extension, and we can consider whether it is hyperbolic.

We denote by $\Hyp^\ast(\Phi)$
(resp. by $\Hyp_2^\ast(\Phi)$) the union of the subgroups $\N_{L/K}^\ast$ where $L/K$ is a finite field extension (resp.~a finite $2$-extension) such that $\Phi_L$ is hyperbolic.
We further set $\Hyp(\Phi)=\la \Hyp^\ast(\Phi)\ra$ and $\Hyp_2(\Phi)=\la\Hyp^\ast(\Phi)\ra$.
Note that $\Hyp_2(\Phi)\subseteq\Hyp(\Phi)$.
We do not know of an example where these two groups are different. 
Cases where $\Hyp^\ast(\Phi)\subsetneq\Hyp(\Phi)$ are easy to obtain, e.g.~from \Cref{G-in-Nrd}~$(a)$ below.
While the main attention lies on the groups $\Hyp(A,\s)$ and $\Hyp_2(A,\s)$, the $\ast$-notation will allow us to formulate more precise statements without becoming too cumbersome.

If $\Phi$ is a quadratic form of odd dimension over $K$ or (equivalently) a $K$-algebra with involution of odd degree,
then $\Hyp_2^\ast(\Phi)=\emptyset$ and $\Hyp_2(\Phi)=\{1\}$.

For a quadratic form $\vf$ over $K$, the adjoint $K$-algebra with involution $\Ad(\vf)$ is hyperbolic if and only if the quadratic form $\vf$ is hyperbolic, and thus we have $\Hyp_2^\ast(\Ad(\vf))=\Hyp_2^\ast(\vf)$ and $\Hyp^\ast(\Ad(\vf))=\Hyp^\ast(\vf)$.

\begin{prop}[Norm Principle]\label{SNP}
    Let $\Phi$ be a quadratic form or a $K$-algebra with involution.
    Then $\sq{K}\Hyp(\Phi)\subseteq \G^+(\Phi)$.
\end{prop}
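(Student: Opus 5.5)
The plan is to reduce the statement to two ingredients. The first is that hyperbolicity over $L$ makes every element of $\mg{L}$ a similarity factor of $\Phi_L$ coming from the connected component. The second is a norm principle for the group of similarity factors $\G^+$ along finite field extensions, which I will quote rather than prove. By the discussion before the statement, a quadratic form $\vf$ can be replaced by $\Ad(\vf)$: the two objects become hyperbolic over the same extensions, so $\Hyp(\vf)=\Hyp(\Ad(\vf))$, and I read $\G^+(\vf)$ as $\G^+(\Ad(\vf))$. So I assume throughout that $\Phi=(A,\s)$.

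First, $\G^+(A,\s)$ is a subgroup of $\mg{K}$ containing $\sq{K}$. Indeed, for $c\in\mg{K}$ the element $c\in\Sim(A,\s)$ represents the identity of ${\bf PSim}^+(A,\s)(K)$, and $\mu(c)=c^2$. Since $\Hyp(A,\s)$ is generated by the groups $\N_{L/K}^\ast$, it then suffices to show $\N_{L/K}^\ast\subseteq\G^+(A,\s)$ for every finite extension $L/K$ such that $(A,\s)_L$ is hyperbolic.

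Second, I show $\G^+(A_L,\s_L)=\mg{L}$ in this situation. Choose $e\in A_L$ with $e^2=e$ and $\s_L(e)=1-e$. For $\lambda\in\mg{L}$ put $x_\lambda=e+\lambda(1-e)$. Using $e(1-e)=0$, we get
$$\s_L(x_\lambda)x_\lambda=\bigl((1-e)+\lambda e\bigr)\bigl(e+\lambda(1-e)\bigr)=\lambda(1-e)+\lambda e=\lambda .$$
Thus $x_\lambda\in\Sim(A_L,\s_L)$ with $\mu(x_\lambda)=\lambda$. Moreover $\lambda\mapsto x_\lambda\mg{L}$ is a morphism from the connected group $\mathbb{G}_m$ into ${\bf PSim}(A,\s)_L$ sending $1$ to the identity. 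Its image therefore lies in the identity component, so $x_\lambda\mg{L}\in{\bf PSim}^+(A,\s)(L)$ and $\lambda\in\G^+(A_L,\s_L)$.

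Third, I apply the norm principle
$$\N_{L/K}\bigl(\G^+(A_L,\s_L)\bigr)\subseteq\G^+(A,\s),$$
valid for every finite extension $L/K$. For adjoint classical groups of types $C$ and $D$ this is due to Merkurjev \cite{Mer96}. For symplectic involutions, and for split orthogonal ones, it also follows from Scharlau's transfer argument (Knebusch's norm principle for $\G(\vf)$), together with \Cref{G-G+}, which gives $\G^+=\G$ in those cases. Combining this with the second step yields $\N_{L/K}^\ast=\N_{L/K}(\mg{L})\subseteq\G^+(A,\s)$, which proves the claim.

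The main obstacle is the norm principle for nonsplit orthogonal involutions, where $\G^+$ may have index $2$ in $\G$ by \Cref{G-G+}. This is the genuinely deep input, and I would cite Merkurjev's theorem for it rather than reprove it. A self-contained transfer argument would only control $\G$, not $\G^+$.
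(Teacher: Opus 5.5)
Your Steps 1--2 are correct. The element $x_\lambda=e+\lambda(1-e)$ has multiplier $\lambda$ and lies on a one-parameter subgroup, hence in the identity component. However, your route differs from the paper's and rests on a heavier black box.

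The paper simply cites \cite[Theorem 1]{Mer96}. The isomorphism in \Cref{Mer:T1} onto $\G^+(\Phi)/\sq{K}\Hyp(\Phi)$ already contains the inclusion $\sq{K}\Hyp(\Phi)\subseteq\G^+(\Phi)$. Only for quadratic forms does the paper also point to the classical argument via Knebusch's norm principle \cite[Chap.~VII, Cor.~4.4]{Lam05}.

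You generalise that classical argument. Hyperbolicity gives $\G^+(A_L,\s_L)=\mg{L}$, and you then apply a norm principle $\N_{L/K}(\G^+(A_L,\s_L))\subseteq\G^+(A,\s)$ for \emph{arbitrary} finite $L/K$. This is strictly more than is needed, and it is not what \Cref{Mer:T1} states. For nonsplit orthogonal involutions you therefore need a precise reference or an argument for it. One possible argument: test $\lambda\in\G(A,\s)$ by hyperbolicity of $\Ad\lla\lambda\rra\otimes(A,\s)$ over the function field of the Severi--Brauer variety of $A$ (Karpenko's hyperbolicity theorem), apply Knebusch there, and treat separately the exceptional case of \Cref{G-G+} where $\G^+\neq\G$.

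Similarly, Knebusch's principle handles symplectic involutions directly only when $\ind A\leq 2$, via Jacobson's correspondence.

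Since Step 3 is only ever applied to fields $L$ over which $\s_L$ is hyperbolic, the economical fix is to invoke exactly that special case, which is what the paper's citation provides. What your version adds is a transparent explanation of why hyperbolicity over $L$ makes every element of $\mg{L}$ a multiplier of a proper similitude.
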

\begin{proof}
This is covered by \cite[Theorem 1]{Mer96}.
See also~\cite[Chap. VII, Cor. 4.4]{Lam05} for the case where $\Phi$ is a quadratic form.
\end{proof}

\begin{thm}[Merkurjev]
\label{Mer:T1}
Let $\Phi$ be a quadratic form or a $K$-algebra with involution. 
For any field extension $K'/K$, the homomorphism $\mu$ induces an isomorphism on the group of $K'$-rational $R$-equivalence classes 
$${\bf PSim}^+(\Phi)(K')/R\stackrel{\simeq}{\longrightarrow} \G^+(\Phi_{K'})/\sq{K}\Hyp(\Phi_{K'})\,.$$
\end{thm}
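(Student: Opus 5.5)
This statement is Merkurjev's theorem from \cite[Theorem~1]{Mer96}, applied to the adjoint group ${\bf PSim}^+(\Phi)$ over $K'$. My plan is to deduce it from that source rather than reprove it from scratch. I will only verify that the formulation here matches Merkurjev's, where $\Hyp(\Phi_{K'})$ is generated by norms from all finite extensions over which $\Phi$ becomes hyperbolic.

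First I reduce to $K'=K$. The objects involved are compatible with scalar extension: ${\bf PSim}^+(\Phi)_{K'}={\bf PSim}^+(\Phi_{K'})$, and $\G^+$ and $\Hyp$ are defined intrinsically in terms of $\Phi_{K'}$.

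Next I treat the case where $\Phi$ is a quadratic form $\vf$. We have $\Hyp^\ast(\Ad(\vf))=\Hyp^\ast(\vf)$, and ${\bf PSim}^+(\vf)$ is defined via $\Ad(\vf)$. So the quadratic-form case is literally a special case of the algebra-with-involution case, and I may assume $\Phi=(A,\s)$.

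For the main argument I follow Merkurjev's strategy.
\begin{enumerate}
\item \emph{The map is well defined and surjective.} By definition of $\G^+$, the multiplier map $\mu$ sends ${\bf PSim}^+(A,\s)(K)$ onto $\G^+(A,\s)$ modulo $\sq{K}$; the square class is what makes $\mu$ well defined on projective similitudes, since scalars have square multipliers. $R$-trivial elements map into $\sq{K}\Hyp(A,\s)$. Indeed, a rational curve in ${\bf PSim}^+$ starting at $1$ gives, after composing with $\mu$, a rational function $f(t)$ with $f(0)\in\sq{K}$. Its value at $1$ is then a product of squares and norms from residue fields of the points where the torsor ${\bf Sim}^+\to{\bf PSim}^+$ fails to be liftable. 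At those points the involution becomes hyperbolic, which is the norm principle argument behind \Cref{SNP}.
\item \emph{The map is injective.} This is the hard direction. One must show that every projective similitude with multiplier in $\sq{K}\Hyp(A,\s)$ is $R$-equivalent to $1$. I would reduce to the following fact: for a finite extension $L/K$ with $\s_L$ hyperbolic, every element of $\N_{L/K}^\ast$ is the multiplier of an element $R$-equivalent to $1$. Over $L$, hyperbolicity makes ${\bf PSim}^+(A_L,\s_L)$ contain a parabolic whose Levi factor realizes any multiplier, with a rational parametrization. One then transports this down to $K$ via the norm (corestriction) of tori, using that $R_{L/K}$ of a rational variety is rational.
\item \emph{Identification of the kernel.} It remains to show that elements with square multiplier lie in the image of the simply connected cover's points, up to $R$. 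This uses that the commutator subgroup and the unipotent-generated part are $R$-trivial for isotropic groups, plus a reduction to the isotropic case via generic splitting.
\end{enumerate}

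I expect step (2) to be the main obstacle, because it requires the genuine geometric input from \cite{Mer96}. In the paper I would simply cite \cite[Theorem~1]{Mer96}.
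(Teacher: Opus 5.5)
This matches the paper, which also proves the statement only by citing \cite[Theorem~1]{Mer96}. Your reduction to $K'=K$ is fine; it tacitly, and correctly, reads the $\sq{K}$ in the display as $\sq{K'}$. So is your passage from a quadratic form $\vf$ to $\Ad(\vf)$.

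The outline of Merkurjev's argument you add should not be relied on, however.
\begin{itemize}
\item In step (3), a projective similitude with square multiplier rescales to an element of $\mathbf{O}^+(A,\s)(K)$, resp.\ $\mathbf{Sp}(A,\s)(K)$. That element is $R$-trivial simply because these groups are rational via the Cayley transform. Your proposed reduction to the isotropic case via generic splitting could not work, because $R$-triviality does not descend from such fields: over a field where $\s$ becomes hyperbolic all $R$-classes become trivial, although ${\bf PSim}^+(A,\s)(K)/R$ can be nontrivial.
\item In step (2), there is no norm map from $\PSim(A_L,\s_L)$ to $\PSim(A,\s)$ along which a ``norm of tori'' could be transported. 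That transfer is exactly the substantial content of \cite{Mer96}.
\end{itemize}
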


\begin{proof}
    See \cite[Theorem 1]{Mer96}.
\end{proof}

\section{Reduced norms} 

For a $K$-algebra with orthogonal involution $(A,\s)$ of trivial discriminant, we look into the general relations (without conditions on $K$)  between $\G(A,\s)$, $\Hyp(A,\s)$ as well as the groups of reduced norms of $A$ and of the two components of $\Cl(A,\s)$.

Let $A$ be a central simple $K$-algebra.
We denote by $\Trd_A:A\to K$ the reduced trace and by $\Nrd_A:A\to K$ the reduced norm map of $A$, and we set $\Nrd_A^\ast=\Nrd_A(\mg{A})$, which is a subgroup of $\mg{K}$. 
If $A$ is a $K$-quaternion algebra, then $\Nrd_A$ is a quadratic form over $K$, isometric to $\lla a,b\rra$ for any $a,b\in\mg{K}$ with $A\simeq (a,b)_K$.

\begin{prop}\label{Nrd*}
The following hold:
\begin{enumerate}[$(a)$]
    \item $\Nrd_{A}^\ast=\Nrd_B^\ast$ for every central simple $K$-algebra $B\sim A$.
    \item $\Nrd_{A}^\ast$ is the union of the subgroups $\N_{L/K}^\ast$ for finite field extensions $L/K$ such that $A_L$ is split. 
    \item If $B$ is a central simple $K$-algebra and every finite field extension splitting $A$ also splits $B$, then $\Nrd_A^\ast\subseteq \Nrd_B^\ast$.
\end{enumerate}
\end{prop}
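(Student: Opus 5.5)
We prove the three parts in order, deriving $(c)$ from $(b)$.

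For $(a)$, we use Wedderburn's theorem. It lets us write $A\simeq M_m(D)$ and $B\simeq M_k(D)$ for the same central division $K$-algebra $D$. It then suffices to show that $\Nrd_{M_m(D)}^\ast=\Nrd_D^\ast$ for every $m\geq 1$. The inclusion $\supseteq$ follows by evaluating the reduced norm on diagonal matrices $\mathrm{diag}(d,1,\dots,1)$, which gives $\Nrd_D(d)$. For $\subseteq$, we use that every $x\in\mg{M_m(D)}$ factors as a product of elementary matrices and a diagonal matrix (Gaussian elimination over the division ring $D$). Elementary matrices have reduced norm $1$, since they are unipotent after splitting. The reduced norm of $\mathrm{diag}(d_1,\dots,d_m)$ is $\prod_i\Nrd_D(d_i)$. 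This uses the multiplicativity of $\Nrd$ and its compatibility with block-diagonal embeddings, which can be checked after scalar extension to a splitting field.

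For $(b)$, by $(a)$ we may assume that $A=D$ is a division algebra of degree $n$. We first show that every reduced norm is a field norm from a splitting field. Take $x\in\mg{D}$ and let $L=K(x)\subseteq D$. Then $L$ is a subfield, and $[L:K]$ divides $n$. Let $M\supseteq L$ be a maximal subfield of $D$ containing $L$, which exists by the double centralizer theorem. Then $M$ splits $D$, and the characteristic polynomial argument gives
$$\Nrd_D(x)=\N_{L/K}(x)^{n/[L:K]}=\N_{M/K}(x).$$

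For the converse, let $L/K$ be a finite extension with $A_L$ split. We must show that $\N_{L/K}^\ast\subseteq\Nrd_A^\ast$, and we plan to do this via the norm principle for reduced norms. Since $A_L$ is split, $\Nrd_{A_L}^\ast=\mg{L}$. The key identity is
$$\N_{L/K}(\Nrd_{A_L}(y))=\Nrd_A(y')$$
for $y\in A_L$, where $y'$ is the image of $y$ under the embedding $A_L\hookrightarrow M_{[L:K]}(A)$ given by the regular representation of $L$ over $K$. This shows $\N_{L/K}(\Nrd_{A_L}^\ast)\subseteq\Nrd_{M_{[L:K]}(A)}^\ast=\Nrd_A^\ast$, where the last equality is $(a)$.

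We expect this converse inclusion to be the main obstacle, because the compatibility of reduced norms with restriction of scalars needs care. We would verify it after extending scalars to a splitting field of $A$, where it becomes the classical identity $\det_K=\N_{L/K}\circ\det_L$ for $L$-linear maps. Alternatively, we could cite \cite{GS17} or \cite{Pie82} for this step.

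The sets $\N_{L/K}^\ast$ are groups and their union equals the group $\Nrd_A^\ast$, which proves $(b)$. Finally, $(c)$ is immediate from $(b)$: every $L$ splitting $A$ also splits $B$, so the union describing $\Nrd_A^\ast$ is contained in the union describing $\Nrd_B^\ast$.
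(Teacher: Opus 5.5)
Your proof is correct. For $(a)$ the paper simply cites \cite[Lemma~2.6.4]{GS17}, whereas you give the Wedderburn plus Gaussian elimination argument. For $(c)$, and for the inclusion of $\Nrd_A^\ast$ in the union of the groups $\N_{L/K}^\ast$, your reasoning coincides with the paper's: an element of $D$ lies in a maximal subfield $M$, which splits $D$, and $\Nrd_D$ restricts to $\N_{M/K}$ on $M$.

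The genuine difference is the reverse inclusion $\N_{L/K}^\ast\subseteq\Nrd_A^\ast$ for a splitting field $L$.
\begin{itemize}
\item The paper uses the structural fact \cite[Theorem~13.3]{Pie82}: $L$ splits $A$ if and only if $L$ is a maximal subfield of some $B\sim A$. Then $\N_{L/K}(\mg{L})=\Nrd_B(\mg{L})\subseteq\Nrd_B^\ast=\Nrd_A^\ast$ by $(a)$. So both inclusions in $(b)$ rest on the single fact that the reduced norm restricts to the field norm on a maximal subfield.
\item You use the norm principle $\N_{L/K}\circ\Nrd_{A_L}=\Nrd_{M_r(A)}\circ(\id_A\otimes\rho)$, with $\rho\colon L\to\End_K(L)$ the regular representation and $r=[L:K]$. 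You combine it with surjectivity of $\Nrd_{A_L}$ in the split case. This bypasses the structure theorem on splitting fields. It also proves the stronger statement $\N_{L/K}(\Nrd_{A_L}^\ast)\subseteq\Nrd_A^\ast$ for every finite extension $L/K$, not only splitting ones, at the price of a determinant computation.
\end{itemize}

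One small correction to that computation. After extending scalars to a splitting field $F$ of $A$, the algebra $R=L\otimes_K F$ is in general not a field, and it may even be nonreduced. So the identity you need is transitivity of determinants for a commutative free $F$-algebra $R$ of rank $r$: $\det_F(\rho_\ast(y))=\N_{R/F}(\det_R(y))$ for $y\in M_n(R)$. Here $\rho_\ast\colon M_n(R)\to M_{nr}(F)$ applies the regular representation of $R$ over $F$ entrywise. This identity holds, for instance by the determinant formula for block matrices with pairwise commuting blocks. It is not literally the field case $\det_K=\N_{L/K}\circ\det_L$ that you quote.
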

 \begin{proof}
    See \cite[Lemma 2.6.4]{GS17} for $(a)$.
    Part $(b)$ follows from $(a)$, using that a finite field extension $L/K$ splits $A$ if and only if $A\sim B$ for a central simple $K$-algebra $B$ containing $L$ as a maximal commutative subring (cf.~\cite[Theorem 13.3]{Pie82}), and  in this case $\Nrd_B|_L=\N_{L/K}$.
    Parts $(c)$ is immediate from $(b)$.
 \end{proof}

Assume that $\deg A$ is even and consider an orthogonal involution $\s$ on $A$.
We denote by $\disc(\s)$ the discriminant of $\s$, given as a coset in $\scg{K}$.
We may write $d\in\disc(\s)$ to indicate that $d\in\mg{K}$ is such that $\disc(\s)=d\sq{K}$.
We denote by $\Cl(A,\s)$ the Clifford algebra of $(A,\s)$.
If $\s$ has trivial discriminant (i.e. $1\in\disc(\s)$), then $\Cl(A,\s)\simeq C_1\times C_2$ for two central simple $K$-algebras $C_1$ and $C_2$, which we call the \emph{components of $\Cl(A,\s)$}, and by \cite[Theorem 9.12]{KMRT98}, we have  in $\Br(K)$ that
$[C_1]+[C_2]=[A]$  if $\deg A\equiv 0\bmod 4$ and $2[C_1]=2[C_2]=[A]$ if $\deg A\equiv 2\bmod 4$.

\begin{prop}\label{G-in-Nrd}
    Let $(A,\s)$ be a $K$-algebra with orthogonal involution of trivial discriminant and let $C_1$ and $C_2$ be the two components of $\Cl(A,\s)$. Then:
\begin{enumerate}[$(a)$]
    \item $\Hyp(A,\s)\subseteq \Nrd_{C_1}^\ast\cdot \Nrd_{C_2}^\ast$.
    \item  If $\ind(C_i)\leq 2$ for $i\in\{1,2\}$, then $\G(A,\s)\subseteq \Nrd_{C_1}^\ast\cdot \Nrd_{C_2}^\ast$.
    \item  If $\deg A=4$, then 
    $\G(A,\s)=\Nrd_{C_1}^\ast\cdot \Nrd_{C_2}^\ast=\Hyp_2(A,\s)$.
\end{enumerate}
\end{prop}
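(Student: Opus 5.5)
For $(a)$ I will work with a generator. Let $L/K$ be a finite field extension with $\s_L$ hyperbolic, and take $x\in\N_{L/K}^\ast$. Since $\s_L$ is hyperbolic with trivial discriminant, one component of $\Cl(A,\s)_L$ is split by \cite[Prop.~8.31]{KMRT98} (the Clifford algebra of a hyperbolic involution has a split component). Say $(C_1)_L$ is split. Then \Cref{Nrd*}~$(b)$ gives $x\in\N_{L/K}^\ast\subseteq\Nrd_{C_1}^\ast$. Hence $\Hyp^\ast(A,\s)\subseteq\Nrd_{C_1}^\ast\cup\Nrd_{C_2}^\ast$, and passing to the generated group yields $(a)$.

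For $(b)$ I will use the multiplier/Clifford relation. Let $g\in\Sim(A,\s)$ with $\mu(g)=\lambda$.
\begin{enumerate}[$(i)$]
\item If $g$ is proper, then by \cite[Theorem~13.33]{KMRT98} the element $\lambda$ is tied to the Clifford algebra as follows: in $\Br(K)$ we have $(\lambda)\cup(\disc)$-type relations, and more precisely $[C_1\otimes(\lambda,\cdot)]$. Concretely, the relevant fact (\cite[(13.38)]{KMRT98}) is that for a proper similitude, $C_1\otimes_K(\lambda,z)$ is split-related in a way that forces $(\lambda)\cup[C_1]=0$, i.e.\ $C_1\otimes(\lambda)$-quaternion conditions. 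Since $\ind C_i\leq2$, write $C_1\sim Q_1$ with $Q_1$ quaternion. The condition that $\lambda$ makes a certain quaternion product split, $(\lambda)\cup[Q_1]=0$, means $\lambda\in\D(\Nrd_{Q_1})\cdot\sq K$-type norm set, i.e.\ $\lambda\in\Nrd_{Q_1}^\ast=\Nrd_{C_1}^\ast$. This uses that $(\lambda)\cup[(a,b)]=0$ iff $\lla a,b,\lambda\rra$ is hyperbolic iff $\lambda$ is represented by the pure-norm part, a standard Pfister-form fact.
\item If $g$ is improper (this requires $A$ split-related cases), then composing with an improper element reduces to the proper case. This contributes a factor from $\Nrd_{C_2}^\ast$, since an improper similitude swaps $C_1$ and $C_2$.
\end{enumerate}
The main obstacle is to pin down the exact cohomological constraint on $\mu(g)$ and convert it into membership in the product $\Nrd_{C_1}^\ast\Nrd_{C_2}^\ast$. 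I would first try the identity $(\lambda)\cup[C_1]=0$ from KMRT, and fall back on the split/quaternion reduction.

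For $(c)$, since $\deg A=4$ with trivial discriminant, $(A,\s)\simeq(Q_1\otimes Q_2,\s_1\otimes\s_2)$ with $C_i\simeq Q_i$ quaternion \cite[Theorem~15.7]{KMRT98}. Thus $\ind C_i\leq2$, and $(b)$ gives $\G\subseteq\Nrd_{C_1}^\ast\Nrd_{C_2}^\ast$. For the reverse inclusion, take $x\in\Nrd_{Q_1}^\ast$. Then $x\in\D(\Nrd_{Q_1})$, so there is a quadratic extension $L=K(\sqrt{a})\subseteq Q_1$ with $x\in\N_{L/K}^\ast$ (or $x$ is a square). The field $L$ splits $Q_1=C_1$. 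For a degree-$4$ algebra with trivial discriminant, $\s_L$ is hyperbolic exactly when a component of the Clifford algebra splits \cite[Prop.~15.14]{KMRT98}, so $x\in\Hyp_2(A,\s)$. Hence $\Nrd_{C_1}^\ast\Nrd_{C_2}^\ast\subseteq\Hyp_2(A,\s)$. Finally, \Cref{SNP} gives $\Hyp_2\subseteq\G$, which closes the chain of equalities.
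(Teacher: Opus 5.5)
Your part $(a)$ is the paper's argument. In $(c)$, your proof of $\Nrd_{C_1}^\ast\cdot\Nrd_{C_2}^\ast\subseteq\Hyp_2(A,\s)$ via quadratic subfields splitting $C_i$ also matches the paper. The genuine gap is in $(b)$, and it carries over to the inclusion $\G(A,\s)\subseteq\Nrd_{C_1}^\ast\cdot\Nrd_{C_2}^\ast$ in your $(c)$, which you derive from $(b)$.

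The constraint you propose is $(\lambda)\cup[C_1]=0$ for the multiplier $\lambda$ of a proper similitude. It is false once $A$ is not split, even if you only ask it for one of the two components. Over $K=\rr(t)$, let $(A,\s)=(Q_1,\can_{Q_1})\otimes(Q_2,\can_{Q_2})$ with $Q_1=(-1,t)_K$ and $Q_2=(-1,-t)_K$, so that $C_i\simeq Q_i$. Choose $j_1\in Q_1$ and $j_2\in Q_2$ with $j_1^2=t$ and $j_2^2=-t$. Then $g=j_1\otimes j_2$ is a similitude with $\s(g)g=-t^2$, and $\G^+(A,\s)=\G(A,\s)$ by \Cref{G-G+}. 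However, $(-1)\cup[Q_1]$ and $(-1)\cup[Q_2]$ are both nonzero, as one sees at an ordering with $t<0$, resp.~$t>0$.

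What holds in general is only $(\lambda)\cup[C_1]\in[A]\cup(\mg{K})$; this is what the paper extracts from the Rost invariant in the proof of \Cref{KP08-6-15}. For nonsplit $A$, nothing formal turns this into $\lambda\in\Nrd_{C_1}^\ast\cdot\Nrd_{C_2}^\ast$; the paper makes that deduction only under $\I^3K(\sqrt{-1})=0$, using signatures. Your step $(ii)$ cannot supply the second factor either. With trivial discriminant $\G^+(A,\s)=\G(A,\s)$, so improper similitudes contribute nothing new, and you never justify why the multiplier of the improper element you compose with would lie in $\Nrd_{C_2}^\ast$.

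What is missing is a way to go from the split case back to $K$. In the split case your relation does hold: if $(A,\s)\simeq\Ad(\rho)$ and $\lambda\in\G(\rho)$, then $(\lambda)\cup c(\rho)=e_3(\lla\lambda\rra\otimes\rho)=0$. The paper transfers this using the $8$-dimensional form $\vf=\pi_1\perp-\lambda\pi_2\in\I^2K$, where $\pi_i$ is the norm form of the quaternion algebra $Q_i\sim C_i$. One has $\Cl(\vf)\sim Q_1\otimes Q_2\sim A$; this also holds when $\deg A\equiv 2\bmod 4$, where $[A]=0$ and $Q_1\simeq Q_2$. Over the function field $K_A$ of the Severi--Brauer variety of $A$, the split case gives $\lambda\in\G(\pi_1)=\G(\pi_2)$, so $\vf_{K_A}$ is hyperbolic. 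Laghribi's theorem \cite[Th\'eor\`eme~4]{Lag96} then shows that $\vf$ is isotropic over $K$, which says exactly that $\lambda\in\D(\pi_1)\D(\pi_2)=\Nrd_{C_1}^\ast\cdot\Nrd_{C_2}^\ast$. This descent step is the key input absent from your proposal. For $(c)$ you can avoid $(b)$ altogether by quoting the description of $\G(A,\s)$ for $\deg A=4$ from \cite{KMRT98}, as the paper does.
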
 

 \begin{proof}
$(a)$ Consider $a\in\Hyp^\ast(A,\s)$.
Then there exists a finite field extension $L/K$ such that $\s_L$ is hyperbolic and $a\in\N_{L/K}^\ast$.
It follows by \cite[Prop.~8.31]{KMRT98} that either $(C_1)_L$ or $(C_2)_L$ is split.
This yields that $\Hyp^\ast(A,\s)\subseteq \Nrd_{C_1}^\ast\cup\Nrd_{C_2}^\ast$, whereby $\Hyp(A,\s)\subseteq \Nrd_{C_1}^\ast\cdot\Nrd_{C_2}^\ast$.

$(b)$  By the hypothesis, for $i\in\{1,2\}$, we have $C_i\sim Q_i$ for a $K$-algebra $Q_i$, and we denote by $\pi_i$ its norm form, to obtain that $\Nrd_{C_i}^\ast=\Nrd_{Q_i}^\ast=\D(\pi_i)$.
Let $a\in\G(A,\s)$.
We consider the quadratic form $\varphi=\pi_1\perp-a\pi_2$ over $K$. 
By \cite[Theorem 9.15]{KMRT98}, we have  $\Cl(\varphi)\sim Q_1\otimes Q_2\sim A$.
If $A$ is split, then $Q_1\simeq Q_2$ and $(A,\s)\simeq \Ad(\rho)$ for a quadratic form $\rho$ over $K$ with $\Cl(\rho)\sim  Q_1\sim Q_2$, and it follows that $\G(A,\s)=\G(\rho)\subseteq \G(\pi_1)=\G(\pi_2)$, whereby $\varphi$ is hyperbolic.
In the general case, let $K_A/K$ denote the splitting field of the Severi-Brauer variety associated to $A$. 
We conclude from the split case that $\varphi_{K_A}$ is hyperbolic. 
By \cite[Théorème 4]{Lag96}, this implies that $\varphi$ is isotropic over $K$.
This yields that $a\in \D(\pi_1)\cdot\D(\pi_2)=\Nrd_{C_1}^\ast\cdot \Nrd_{C_2}^\ast$.

$(c)$ Assume that $\deg A=4$.
By \cite[Corollary 15.13]{KMRT98}, then $C_1$ and $C_2$ are $K$-quaternion algebras and $(A,\s)\simeq (C_1,\can_{C_1})\otimes (C_2,\can_{C_2})$, and by \cite[Corollary 15.13]{KMRT98}, we have $\G(A,\s)=\Nrd_{C_i}^\ast\cdot\Nrd_{C_2}^\ast$.
From $(a)$ we obtain that $\Hyp_2(A,\s)\subseteq \Nrd_{C_i}^\ast\cdot\Nrd_{C_2}^\ast$.
To show the converse inclusion, consider  $a=a_1\cdot a_2$  with  $a_1\in \Nrd_{C_1}^\ast$ and $a_2\in \Nrd_{C_2}^\ast$.
For $i\in\{1,2\}$, there exists a field extension $L_i/K$ with $[L_i:K]\leq 2$ such that $a_i\in\N_{L_i/K}^\ast$ and $(C_i)_{L_i}$ is split, whereby  $(\can_{C_i})_{L_i}$ and $\s_{L_i}$ are hyperbolic, and consequently $a_i\in\N_{L_i/K}^\ast\subseteq\Hyp_2^\ast(A,\s)$. 
Hence $a=a_1a_2\in\Hyp_2(A,\s)$.
\end{proof}

In the remainder of this section, we look at the case where $\deg A\equiv 2\bmod 4$. 

\begin{prop}\label{G-Nrd-deg2mod4}
Let $(A,\s)$ be a $K$-algebra with orthogonal involution with $\deg A\equiv 2\bmod 4$. Then:
\begin{enumerate}[$(a)$]
    \item $\G^+(A,\s)\subseteq \Nrd_A^\ast$.
    \item If $\disc(\s)$ is trivial discriminant and $C$ is a component of  $\Cl(A,\s)$, then $\Cl(A,\s)\simeq C\times C^{\op}$  and $C^{\otimes 2}\sim A$, and in particular $A$ is split or $\ind C\geq 4$.
\end{enumerate}
 \end{prop}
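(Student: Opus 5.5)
For $(a)$ I will use the relation between $\G^+$, the discriminant and the norm that is known for orthogonal involutions. By \cite[Prop.~12.23]{KMRT98}, a similitude $g\in\Sim(A,\s)$ whose class lies in ${\bf PSim}^+(A,\s)(K)$ is proper, meaning that $\Nrd_A(g)=\mu(g)^{\deg A/2}$. Write $\deg A=2m$ with $m$ odd, and put $\mu(g)=a$. Then $\Nrd_A(g)=a^m=a\cdot (a^{(m-1)/2})^2$. Since $m-1$ is even, $a^{(m-1)/2}\in\mg{K}$. Every nonzero square $b^2$ is the reduced norm $\Nrd_A(b^{1/m}\cdots)$, or more simply $b^2\in\Nrd_A^\ast$, because $\Nrd_A(b)=b^{\deg A}=(b^{m})^2$. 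I need all of $\sq{K}$ here, and this follows from \Cref{Nrd*}$(b)$ once I note that quadratic extensions are not needed: for $c\in\mg{K}$ we have $c^2=\N_{L/K}(c)$ for $L=K(\sqrt{e})$ splitting... That is awkward, so I will argue more directly. Since $\Nrd_A^\ast$ is a group containing $\Nrd_A(g)=a^m$ and $\Nrd_A(a^{-(m-1)/2}\cdot 1)=a^{-(m-1)m}$, I get $a^{m-(m-1)m}=a^{m(2-m)}\in\Nrd_A^\ast$. Since $m(2-m)$ is odd, and $a^2$ lies in $\Nrd_A^\ast$ by \Cref{Nrd*}$(b)$ (take a splitting field $L$ of odd-free degree, or use $a^{2}\cdot\ldots$), I conclude $a\in\Nrd_A^\ast$. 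The cleaner route is to use that $\ind A$ divides $\deg A$, so $\Nrd_A^\ast\supseteq (\mg{K})^{\ind A}$, and to combine this with $a^m\in\Nrd_A^\ast$. Because $\ind A$ divides $2m$ and $\gcd(m,\ldots)$ works out, some integer combination of $m$ and $\ind A$ equals $\gcd(m,\ind A)$, which is $1$ when $\ind A$ is a power of $2$. The index of $A$ is a power of $2$ since $A$ carries an involution of the first kind and so $[A]\in\Br_2(K)$. It follows that $a\in\Nrd_A^\ast$.

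For $(b)$, trivial discriminant gives $\Cl(A,\s)\simeq C_1\times C_2$. Since $\deg A\equiv 2\bmod 4$, the canonical involution of $\Cl(A,\s)$ is of the second kind on the center $K\times K$ and exchanges the two factors \cite[Prop.~8.12]{KMRT98}. Hence $C_2\simeq C_1^{\op}$, and so $\Cl(A,\s)\simeq C\times C^{\op}$ for either component $C$. The fundamental relation \cite[Theorem 9.12]{KMRT98} recalled above gives $2[C]=[A]$, that is, $C^{\otimes 2}\sim A$.

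For the final claim, suppose $\ind C\leq 2$. Then $2[C]=0$, so $[A]=0$ and $A$ is split. Since $\ind C$ is a power of $2$ (indeed $4[C]=2[A]=0$), the only alternative is $\ind C\geq 4$.

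The main obstacle is the step in $(a)$ that identifies a proper similitude via $\Nrd_A(g)=\mu(g)^{\deg A/2}$ for classes in ${\bf PSim}^+$. I will cite this from \cite[\S12, \S13]{KMRT98}. I also have to make sure that the power-of-$2$ index together with the odd exponent $m$ really gives $a\in\Nrd_A^\ast$; this uses $(\mg{K})^{\ind A}\subseteq\Nrd_A^\ast$, which follows from \Cref{Nrd*}$(a)$ by passing to the division algebra.
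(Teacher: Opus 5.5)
Your proof is correct and follows essentially the paper's route. In $(a)$, both arguments use $\Nrd_A(g)=\mu(g)^{m}$ with $m$ odd for proper similitudes, and combine it with a power of $\mu(g)$ of $2$-power exponent lying in $\Nrd_A^\ast$: the paper deduces $\ind A\leq 2$ and uses $\sq{K}\subseteq\Nrd_Q^\ast$, while you use $(\mg{K})^{\ind A}\subseteq\Nrd_A^\ast$ and a B\'ezout combination. In $(b)$, your derivation of $C_2\simeq C_1^{\op}$ from the canonical involution being of the second kind is just the mechanism behind the relation $[C_1]+[C_2]=0$ that the paper quotes from the fundamental relations.

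In the write-up of $(a)$, keep only the ``cleaner route''. The abandoned attempts before it contain incorrect justifications. For example, $\Nrd_A(b)=b^{2m}=(b^m)^2$ only shows that squares of $m$-th powers are reduced norms, not that $b^2\in\Nrd_A^\ast$.
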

\begin{proof}
$(a)$   Write $\deg A=2n$ with $n\in\nat$ and $\lambda\in\G^+(A,\s)$.
Let $a \in \Sim^+(A,\s)$ be such that $\lambda =a\s(a)$.
Then $\Nrd_A(\s(a))=\Nrd_A(a)=+\lambda^n$. 
This implies that 
$$\lambda^{2n}=\Nrd_A(\lambda)=\Nrd_A(a)\Nrd_A(\s(a))=\Nrd_A(a)^2\,.$$
By the hypothesis, $n$ is odd.
We conclude that $\lambda\in \sq{K}\cdot \Nrd_A^\ast$.
Since $A$ carries a $K$-linear involution, $\ind A$ is a $2$-power.
Since $\ind A$ divides $\deg A=2n$, it follows that  $\ind A\leq 2$.
Hence $A\sim Q$ for a $K$-quaternion algebra $Q$.
Using \Cref{Nrd*}, we obtain that 
$\sq{K}\subseteq \Nrd_Q^\ast=\Nrd_A^\ast$.

$(b)$ Assume that $\disc(\s)$ is trivial.
Then $\mc{C}(A,\s)\simeq C_1\times C_2$ for two central simple $K$-algebras $C_1$ and $C_2$ of the same degree.
By \cite[Theorem 9.11]{KMRT98}, we have $[C_1]+[C_2]=0$ and $2[C_1]=2[C_2]=[A]$ in $\Br(K)$.
Hence $C_2\simeq C_1^{\op}$.
Since $2[A]=0$, it follows that either $[A]=0$ or 
the class given by $[C_i]$ has order $4$ in $\Br(K)$ for $i\in\{1,2\}$, and in that case $\ind C_i$ is a multiple of $4$.
\end{proof}

\begin{ex}\label{A3D3}
Consider the case where $(A,\s)$ is orthogonal of degree $6$ with trivial discriminant.
Then $\Cl(A,\s) \simeq C\times C^{\op}$ for a central simple $K$-algebra $C$ of degree $4$.
In this case it follows that
$\G(A,\s)=\sq{K}\Nrd_C^\ast$; see \cite[Prop.~6]{Mer96} and the references there, along with \Cref{G-G+}.
\end{ex}

Recall that, for a $K$-quaternion algebra $Q$, we denote by $K_Q$ the function field of the projective conic associated to $Q$.

\begin{thm}\label{deg2mod4Cliffind4Nrd}
Let $(A,\s)$ be a $K$-algebra with orthogonal involution of trivial discriminant and such that $\deg A\equiv 2\bmod 4$.
Let $C$ be one component of $\Cl(A,\s)$ and $Q$ the $K$-quaternion algebra such that $A\sim Q$. 
Assume that $\ind C_{K_Q}\leq 4$.
Then $$\G(A,\s)\subseteq \sq{K}\Nrd_C^\ast\,.$$
\end{thm}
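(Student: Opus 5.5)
We aim to show that every $\lambda\in\G(A,\s)$ lies in $\sq{K}\Nrd_C^\ast$. Throughout we use $\sq{K}\Nrd_C^\ast=\sq{K}\Nrd_{C^{\op}}^\ast$, which follows from \Cref{G-Nrd-deg2mod4}~$(b)$, so it does not matter which component $C$ is chosen.

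\emph{Step 1: the split case.} If $A$ is split, then $(A,\s)\simeq\Ad(\vf)$ with $\dim\vf\equiv 2\bmod 4$ and $\disc\vf$ trivial, so $\vf\in\I^2K$. Moreover $C$ is Brauer equivalent to $\Cl_0(\vf)$-component, hence $[C]=c(\vf)$. Let $\lambda\in\G(\vf)$. Then $\vf\perp-\lambda\vf=\lla\lambda\rra\otimes\vf$ is hyperbolic. I would invoke the classical fact (e.g.\ from \cite[Prop.~6]{Mer96}-type arguments, or via the norm principle) that similarity factors of an even-dimensional form with trivial discriminant are reduced norms of the even Clifford algebra components, up to squares. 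Alternatively, one can use that $\lla\lambda\rra\otimes\vf$ being hyperbolic gives $(\lambda)\cup c(\vf)=0$ in $\Br(K)$, which for the index-$\le 4$ situation yields $\lambda\in\Nrd_C^\ast$ by Merkurjev--Suslin type results for biquaternion/degree-$4$ algebras. I expect this to be known in general, so I take it as the base case.

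\emph{Step 2: passage to $K_Q$.} For general $A\sim Q$, set $F=K_Q$. Then $A_F$ is split, so $(A,\s)_F\simeq\Ad(\vf)$ and $\lambda\in\G(A_F,\s_F)$. By Step~1, $\lambda\in\sq{F}\Nrd_{C_F}^\ast$. The hypothesis $\ind C_F\le 4$ enters here. Since $2[C_F]=[A_F]=0$, the algebra $C_F$ is Brauer equivalent to a biquaternion algebra $B$ over $F$, which admits an Albert form $\alpha$ with $\Nrd_B^\ast\sq{F}=\G(\alpha)$-type descriptions. Concretely, I would encode the condition "$\lambda\in\sq{F}\Nrd_{C_F}^\ast$" as the isotropy or hyperbolicity of a suitable quadratic form $\psi$ over $K$ built from $\lambda$ and $C$. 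One candidate is obtained from the degree-$6$ picture of \Cref{A3D3}: relate $\sq{F}\Nrd_{C_F}^\ast$ to similarity factors of a $6$-dimensional Albert form of trivial discriminant attached to $C$.

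\emph{Step 3: descent.} Having a form $\psi$ over $K$ that is hyperbolic (or isotropic) over $K_Q$, I would descend using \cite[Théorème~4]{Lag96} as in the proof of \Cref{G-in-Nrd}~$(b)$, or using the standard description of $\W(K)\to\W(K_Q)$, whose kernel is $\Nrd_Q\cdot \W K$. This gives an identity over $K$ expressing $\lambda$ as a square times a reduced norm of $C$, possibly corrected by an element of $\Nrd_Q^\ast$. The correction is then absorbed into the target: since $[A]=2[C]$, every splitting field of $C$ splits $Q$, so \Cref{Nrd*}~$(c)$ gives $\Nrd_C^\ast\subseteq\Nrd_Q^\ast$. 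The harder direction, absorbing $\Nrd_Q^\ast$-factors into $\Nrd_C^\ast$, is where I would instead use $\lambda\in\G^+$ together with $\G^+(A,\s)\subseteq\Nrd_A^\ast=\Nrd_Q^\ast$ from \Cref{G-Nrd-deg2mod4}~$(a)$, with \Cref{G-G+} to pass from $\G$ to $\G^+$.

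The main obstacle is Step~3: choosing the $K$-form $\psi$ so that its behavior over $K_Q$ exactly captures $\lambda\in\sq{F}\Nrd_{C_F}^\ast$, and so that the resulting descent lands in $\sq{K}\Nrd_C^\ast$ rather than in the a priori larger group $\Nrd_Q^\ast\cdot\Nrd_C^\ast$. My first attempt would be to realize $C$ as a component of a degree-$6$ orthogonal involution and then apply \Cref{A3D3} over $F$.
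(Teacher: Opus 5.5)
\textbf{There is a genuine gap.} It sits exactly where you flag the main obstacle: the descent from $F=K_Q$ to $K$ is never carried out, and the routes you sketch for it cannot work.

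Your Step~1 is fine over $F$. An Albert form $\alpha$ for $C_F$, together with Arason--Pfister applied to the $12$-dimensional form $\lla\lambda\rra\otimes\alpha$, gives $\lambda\in\sq{F}\Nrd_{C_F}^\ast$; note this holds only modulo squares, not via Merkurjev--Suslin. The trouble starts when you try to bring this back to $K$.

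\begin{itemize}
\item No quadratic form $\psi$ over $K$ built from $C$ can encode this membership. If $Q$ is not split, then $[C]$ has order $4$ in $\Br(K)$. So there is no Albert form of $C$ over $K$, and nothing for \cite{Lag96} or for the kernel of $\W K\to\W K_Q$ to act on.
\item The ``correct by $\Nrd_Q^\ast$ and absorb'' step is vacuous. Since $\sq{K}\subseteq\Nrd_Q^\ast$ and $\Nrd_C^\ast\subseteq\Nrd_Q^\ast$, a conclusion $\lambda\in\sq{K}\Nrd_C^\ast\cdot\Nrd_Q^\ast$ just says $\lambda\in\Nrd_Q^\ast$. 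That is \Cref{G-Nrd-deg2mod4}~$(a)$, and knowing $\lambda\in\G^+(A,\s)\subseteq\Nrd_Q^\ast$ cannot remove the $\Nrd_Q^\ast$-factor.
\item Your closing idea also fails as stated. Only $C_{K_Q}$ is assumed to have index at most $4$; $C$ itself may have index $8$ over $K$, so it need not be realizable in degree $6$. Moreover, applying \Cref{A3D3} over $F$ merely reproduces Step~1.
\end{itemize}

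\textbf{The missing idea} is to descend hyperbolicity of an involution on an algebra Brauer equivalent to $Q$, rather than membership in a norm group. The argument runs as follows.

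\begin{enumerate}
\item By index reduction for the conic, one of $C$, $C\otimes Q$ has index at most $4$ over $K$. So you can pick a degree-$4$ algebra $C'$ over $K$ with $C'\sim C$ or $C'\sim C\otimes Q$.
\item Because $2[C]=[Q]$, the algebras $C'$ and $C$ are split by the same field extensions. Hence $\Nrd_{C'}^\ast=\Nrd_C^\ast$ by \Cref{Nrd*}.
\item Realize $C'$ as a Clifford component of a degree-$6$ orthogonal involution $(A',\s')$ over $K$. Then $A'\sim C'^{\otimes 2}\sim Q$.
\item Over $K_Q$ write $(A,\s)\simeq\Ad(\vf)$ and $(A',\s')\simeq\Ad(\vf')$. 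Their Clifford invariants coincide there, so $\vf\perp\vf'\in\I^3K_Q$.
\item For $\lambda\in\G(A,\s)$, the form $\lla\lambda\rra\otimes\vf$ is hyperbolic. Hence the $12$-dimensional form $\lla\lambda\rra\otimes\vf'$ lies in $\I^4K_Q$, and so it is hyperbolic by Arason--Pfister.
\item Thus $\Ad(\lla\lambda\rra)\otimes(A',\s')$ is hyperbolic over $K_Q$. Its underlying algebra is Brauer equivalent to $Q$, so it is hyperbolic over $K$ by Dejaiffe's theorem \cite{Dej01}.
\item Therefore $\lambda\in\G(A',\s')=\sq{K}\Nrd_{C'}^\ast=\sq{K}\Nrd_C^\ast$, using \Cref{A3D3} over $K$.
\end{enumerate}
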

\begin{proof}
Note that $\Cl(A,\s)\simeq C\times C^{\op}$.
Since $\ind C_{K_Q}\leq 4$, we may choose a central simple $K$-algebra $C'$ with $\deg C'=4$ such that either $C'\sim C$ or $C'\sim C\otimes Q$.
Then there exists a central simple $K$-algebra with orthogonal involution $(A',\s')$ such that $\Cl(A',\s')\simeq C'\times C'^\op$.
Note that $A'\sim C'^{\otimes 2}\sim C^{\otimes 2} \sim A\sim Q$.
There exist skew-hermitian forms $(V,h)$ and $(V',h')$ over $(Q,\can_Q)$ such that $(A,\s)\simeq \Ad(V,h)$ and $(A',\s')\simeq \Ad(V',h')$.
We obtain a skew-hermitian form $(V\oplus V',h\perp h')$ over $(Q,\can_Q)$ and consider its adjoint $K$-algebra with orthogonal involution $(A'',\s'')=\Ad(V\oplus V',h\perp h')$.
(This is an orthogonal sum of $(A,\s)$ and $(A',\s')$ in the sense of Dejaiffe \cite{Dej98}.)
Since $\disc(\s)$ is trivial, so is $\disc(\s')$, by \cite[Prop.~2.3]{Dej98}.
Furthermore $\Cl(A'',\s'')\simeq (C\otimes C', C^{\op}\otimes C'^{\op})$.
If $C'\sim C$, then $C\otimes C'\sim Q$, and if $C'\sim C\otimes Q$, then $C\otimes C'$ is split.
Therefore $\Cl(A''_{K_Q},\s''_{K_Q})$ is trivial.

Since $A'\sim A\sim Q$, we have that $A,A'$ and $A''$ split over $K_Q$.
Hence there exist quadratic forms $\vf$ and $\vf'$ over $K_Q$ such that $(A,\s)_{K_Q}\simeq \Ad(\vf)$ and $(A',\s')_{K_Q}\simeq \Ad(\vf')$.
It follows that $\Ad(\vf\perp\vf')\simeq (A'',\s'')_{K_Q}$.
Note that $\vf$ and $\vf'$ are even-dimension of trivial discriminant, that is, $\vf,\vf'\in\I^2K_Q$.
Furthermore, $c(\vf)=[C_{K_Q}]$ and $c(\vf')=[C'_{K_Q}]$.
Since $C_{K_Q}\sim C'_{K_Q}$, we conclude that $c(\vf\perp\vf')=0$. 
Hence $\vf\perp\vf'\in \I^3 K_Q$, by \Cref{Merkurjev}.

Consider now an element $a\in\G(A,\s)$.
Then $a\in\G(\vf)$, whereby $\lla a\rra\otimes \vf$ is hyperbolic.
Since $\lla a\rra\otimes (\vf\perp\vf')\in \I^4K_Q$, we conclude that $\lla a\rra\otimes\vf'\in \I^4K_Q$.
Since $\dim(\vf')=\deg A'=6$, we have $\dim(\lla a\rra\otimes \vf')=12$.
We conclude by the Arason-Pfister Hauptsatz \cite[Theorem 23.7]{EKM08} that $\lla a\rra\otimes \vf'$ is hyperbolic.
Hence $\left(\Ad\lla a\rra\otimes (A',\s')\right)_{K_Q}$ is hyperbolic.
Hence, Dejaiffe's Theorem \cite{Dej01} yields that $\Ad\lla a\rra\otimes (A',\s')$ is hyperbolic, whereby $a\in\G(A',\s')$.
This argument shows that $\G(A,\s)\subseteq G(A',\s')$.
Since $\deg A'=6$ and $\Cl(A',\s')=C'\times C'^{\op}$, we have $\G(A',\s')=\sq{K}\Nrd_{C'}^\ast$, by \Cref{A3D3}.
Note that $C$ and $C'$ are split over the same field extensions. Hence $\Nrd_{C'}^\ast=\Nrd_C^\ast$, by \Cref{Nrd*}, and consequently  
$\G(A,\s)\subseteq \G(A',\s')=\sq{K}\Nrd_{C'}^\ast=\sq{K}\Nrd_{C}^\ast$.
\end{proof}

\section{Norms from splitting $2$-extensions} 

We study the situation where $\I^3K(\sqrt{-1})=0$.
We revisit some results from \cite{KP08} and \cite{AP22}, replacing the hypothesis that $\cd_2(K(\sqrt{-1}))\leq 2$ there by the milder condition that $\I^3K(\sqrt{-1})=0$.

We denote by $\I_\tors K$ the torsion ideal of  $\W K$ and set $\I^n_\tors K=\I^n K\cap \I_\tors K$ for $n\in\nat$.

\begin{prop}\label{In-torsfree-torsion-In-Hyp2}
    Let $n\in\nat$ be such that $\I^{n+1}K(\sqrt{-1})=0$.
    Let $\vf\in\I^n_\tors K$. Then $\Hyp_2^\ast(\vf)=\mg{K}$.
\end{prop}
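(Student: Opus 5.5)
The plan is to induct on $\dim(\vf_\an)$, peeling off one quadratic extension at a time. At each step I keep control of a chosen element as a norm and transfer it up via transitivity of norms in towers.

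\emph{Reduction.} The hypotheses pass to quadratic extensions. If $L/K$ is a finite $2$-extension, then $L(\sqrt{-1})/K(\sqrt{-1})$ is a finite $2$-extension (or trivial). Hence $\I^{n+1}L(\sqrt{-1})=0$ by \Cref{In-vanish2ext}. Moreover $\vf_L\in\I^n_\tors L$, since torsion and membership in $\I^n$ are preserved under scalar extension. So it suffices to prove the following claim by induction on $\dim(\vf_\an)$.

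\emph{Claim.} For every $a\in\mg{K}$ there is a finite $2$-extension $L/K$ with $\vf_L$ hyperbolic and $a\in\N_{L/K}^\ast$.

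If $\vf$ is hyperbolic, take $L=K$.

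\emph{Key input.} Assume $\vf$ is anisotropic and nonzero; we may replace $\vf$ by $\vf_\an$. The first step is to show that $a\in\G(\vf)$ for every $a\in\mg{K}$. The form $\lla a\rra\otimes\vf$ lies in $\I^{n+1}K$ and is torsion. The hypothesis $\I^{n+1}K(\sqrt{-1})=0$ makes $\I^{n+1}K$ torsion-free. This uses \cite[Cor.~35.27]{EKM08} together with \cite[Theorem 1]{Kru90}, exactly as in the proof of \Cref{2tors-cohom}: a form in $\I^{n+1}K\setminus\I^{n+2}K$ has a nonzero signature at some ordering, and one argues inductively on the level of $\I^m$. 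Hence $\lla a\rra\otimes\vf$ is hyperbolic, so $a\vf\simeq\vf$.

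\emph{Inductive step.} Pick $c\in\D(\vf)$ and put $\psi=c\vf$, which is anisotropic, represents $1$, and still has $a\in\G(\psi)$. So $\psi$ also represents $a$. Choose vectors $x,y$ with $\psi(x)=1$ and $\psi(y)=a$. Since $\psi$ is anisotropic, the restriction of $\psi$ to the plane $Kx+Ky$ is regular. If $y\in Kx$, then $a$ is a square and trivially a norm, so we may assume $x,y$ are independent. This binary subform represents $1$, so it is isometric to $\la 1,-b\ra$ for some $b\in\mg{K}$, and $b\notin\sq{K}$ by anisotropy. Set $L_1=K(\sqrt b)$. Then $a\in\D(\la1,-b\ra)=\N_{L_1/K}^\ast$, say $a=\N_{L_1/K}(z)$. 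Moreover $\vf_{L_1}$ is isotropic, so $\dim((\vf_{L_1})_\an)<\dim(\vf_\an)$. By the reduction above, the induction hypothesis applies over $L_1$ to $z\in\mg{L_1}$. It yields a $2$-extension $L/L_1$ with $\vf_L$ hyperbolic and $z\in\N_{L/L_1}^\ast$. Then $L/K$ is a $2$-extension and $a=\N_{L_1/K}(\N_{L/L_1}(w))=\N_{L/K}(w)$ for some $w\in\mg{L}$.

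The main obstacle is the key input: establishing that $\I^{n+1}K$ is torsion-free from $\I^{n+1}K(\sqrt{-1})=0$, so that every $a$ is a similarity factor. I expect this to follow by citing the Pfister local-global principle in the refined form of \cite{Kru90}, as was already done in \Cref{2tors-cohom}. The rest is elementary.
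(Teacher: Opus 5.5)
Your argument is correct and gives a genuinely different route. At this point the paper gives no proof and only quotes \cite[Prop.~4.1]{AB25}, whereas you give a self-contained argument. Its only non-elementary input is that $\I^{n+1}K(\sqrt{-1})=0$ forces $\I^{n+1}_\tors K=0$. This is exactly \cite[Cor.~35.27]{EKM08}, which the paper itself uses this way in the proof of \Cref{classy-trivCliff}. Citing it alone suffices: your signature sketch is unnecessary, and as phrased it runs through \Cref{2tors-cohom}, whose proof already relies on that corollary.

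From $\G(\vf)=\mg{K}$ you then use the binary-subform trick that also appears in \Cref{CTS-lemma}. A binary subform $\la 1,-b\ra$ of $c\vf$ representing $a$ gives $a\in\N_{K(\sqrt b)/K}^\ast$ and makes $\vf$ isotropic. Induction on $\dim(\vf_\an)$ over all fields satisfying the hypothesis, together with transitivity of norms, finishes the proof. A side benefit of your route is the explicit bound $[L:K]\leq 2^{\dim(\vf_\an)/2}$.

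One step needs a small repair. When $y\in Kx$ you say $a$ is a square and ``trivially a norm''. What you actually need is a norm from a $2$-extension over which $\vf$ becomes hyperbolic. A square of $K$ need not lie in $\N_{L/K}^\ast$ once $[L:K]\geq 4$, so this is not automatic. The fix is immediate:
\begin{itemize}
\item write $a=\lambda^2$;
\item take any binary subform $\la 1,-b\ra$ of $\psi$ through $x$, and set $L_1=K(\sqrt b)$ and $z=\lambda$, so that $a=\N_{L_1/K}(z)$;
\item continue exactly as in the independent case.
\end{itemize}
This uses $\dim(\psi)\geq 2$, which holds because $\vf$ is even-dimensional for $n\geq 1$. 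For $n=0$ the statement itself requires $\vf$ to be even-dimensional. For example, $\la 1\ra$ over $\cc$ is torsion in $\W\cc$ but never becomes hyperbolic.
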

\begin{proof}
This is \cite[Prop.~4.1]{AB25}.
\end{proof}

A central theme of this and the next section is the quest for extensions  of \Cref{In-torsfree-torsion-In-Hyp2} to algebras with involution, replacing conditions such as being torsion accordingly, in particular for $n=2$.

Let $A$ be a central simple $K$-algebra.
We associate the quadratic form $$\Tr_A:A\to K,x\mapsto \Trd_A(x^2)$$ over $K$, called the \emph{trace form of $A$}.
For an ordering $P\in\X(K)$, one has 
$\sign_P(\Tr_A)=\pm\deg A$, or more precisely,
$$\sign_P(\Tr_A)=\begin{cases}
    +\deg(A) & \mbox{ if $A_{K_P}$ is split,}\\
    -\deg(A) & \mbox{ if $A_{K_P}\sim (-1,-1)_{K_P}$\,.}
\end{cases}
$$ 
We call the central simple $K$-algebra $A$ \emph{totally positive} if $\sign_P(\Tr_A)>0$ for all $P\in \X(K)$, or equivalently, if $A$ splits over every real closure of $K$.
Note that, if $K$ is nonreal, then every central simple $K$-algebra is totally positive.
(Without giving it a name, the condition of being totally positive has been considered in various preceding works, in particular \cite{BFP98}. In \cite{Bec08}, the term `nonreal' was used instead of `totally positive', but that leads to a conflict of terminology when applied to $A=K$.)

The following is a refinement of a  characterization of reduced norms given in \cite[Theorem 2.1]{BFP98} under the stronger assumption that $\cd_2(K(\sqrt{-1}))\leq 2$.
When $K$ is nonreal, the statement corresponds to \cite[Prop.~2.1]{KP08}.

\begin{prop}\label{BFP98-Thm2-1-general}
Assume that $\I^3 K(\sqrt{-1})=0$.
Let $A$ be a central simple $K$-algebra representing a $2$-primary torsion element of $\Br(K)$.
For $a\in\mg{K}$, the following are equivalent:
\begin{enumerate}[$(i)$]
    \item $a\in\Nrd_A^\ast$.
    \item $a\in P$ for all $P\in\X(K)$ with $\sign_P(\Tr_A)<0$.
    \item $a\in\N_{L/K}^\ast$ for a finite $2$-extension $L/K$ such that $A_L$ is split.
\end{enumerate}
In particular, if $A$ is totally positive, then $\Nrd_A^\ast=\mg{K}$.
\end{prop}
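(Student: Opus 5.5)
The cycle of implications is $(iii)\Rightarrow(i)\Rightarrow(ii)\Rightarrow(iii)$. The first two are general facts; only the last uses the hypothesis $\I^3K(\sqrt{-1})=0$.

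For $(iii)\Rightarrow(i)$, apply \Cref{Nrd*}~$(b)$: a norm from any finite splitting field of $A$ is a reduced norm.

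For $(i)\Rightarrow(ii)$, let $P\in\X(K)$ with $\sign_P(\Tr_A)<0$. Then $A_{K_P}\sim(-1,-1)_{K_P}$, and $\Nrd_{A_{K_P}}^\ast=\Nrd^\ast_{(-1,-1)_{K_P}}$ consists of the positive elements of $K_P$, by \Cref{Nrd*}~$(a)$. Since reduced norms are compatible with scalar extension, every $a\in\Nrd_A^\ast$ is positive at $P$.

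The real work is $(ii)\Rightarrow(iii)$, which I would do by induction on $\ind A$, a $2$-power by assumption. If $A$ is split, take $L=K$.

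In general, I first want to reduce to $[A]\in\Br_2(K)$ via \Cref{2tors-cohom}. The plan is to find a quadratic extension $M/K$ such that
\begin{itemize}
\item $\ind A_M<\ind A$, and
\item $a\in\N_{M/K}^\ast\cdot b$ with $b$ satisfying $(ii)$ for $A_M$ over $M$.
\end{itemize}
Then induction and transitivity of norms in towers give a $2$-extension. Since \Cref{In-vanish2ext} only covers $\I^n$ of the field itself, I need to check that $\I^3M(\sqrt{-1})=0$ is preserved. It is, since $M(\sqrt{-1})$ is a quadratic extension of $K(\sqrt{-1})$ or equal to it.

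Concretely, I would use the second statement of \Cref{2tors-cohom} with $n=2$, after first reducing to the case where $[A]$ has order $2$. For the order, use $H^2(K,\mu_{2^r})=H^2(K,\mu_2)$ via the injectivity into $\prod_P\Br(K_P)$: a class of order $2^r$ restricts to elements of order $\leq 2$ at each real closure, and injectivity then forces $2[A]=0$.

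Then $[A]=(-1)\cup\xi$ with $\xi\in H^1(K,\mu_2)$. In other words, $A\sim(-1,d)_K$ for some $d\in\mg{K}$, so $\ind A\leq 2$ and $A$ is Brauer equivalent to a quaternion algebra $Q=(-1,d)_K$. This reduction means the induction collapses to a single step. The set $\{P:\sign_P(\Tr_A)<0\}$ is exactly the set of orderings with $d<_P0$, and $\Nrd_Q=\lla -1,d\rra=\la1,1,-d,-d\ra$.

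So I must show: if $a>_P0$ whenever $d<_P0$, then $a$ is a norm from a quadratic (or $2$-) extension splitting $Q$. Equivalently, $a\in\D(\lla -1,d\rra)$ gives $L=K(\sqrt{x})$ for a suitable pure quaternion. That is the standard argument behind \Cref{Nrd*}~$(b)$ for quaternions, where each reduced norm of $Q$ is a norm from a quadratic subfield.

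The key remaining step is to show that $\la 1,1,-d,-d,-a\ra$ is isotropic, which is equivalent to $\lla -1,d,a\rra$ being hyperbolic. This $3$-fold Pfister form has signature $0$ at every ordering by hypothesis $(ii)$: at any $P$, either $d>_P0$ or $a>_P0$. It is also torsion modulo nothing else, since $\I^3K(\sqrt{-1})=0$ gives $\I^3K=2\times\I^2K$, which is torsion-free by the proof of \Cref{2tors-cohom} via \cite[Cor.~35.27]{EKM08} and \cite{Kru90}. A Pfister form with zero total signature therefore vanishes. This is the main obstacle I expect: verifying precisely that $\I^3K$ is torsion-free under the hypothesis, so that Pfister's local–global principle applies without any cohomological dimension bound.

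Once this holds, $a\in\D(\Nrd_Q)=\Nrd_Q^\ast$. Writing $a=\Nrd_Q(q)$ with $q\notin K$ (adjusting if $a\in\sq{K}$, where $L=K$ works when $Q$ is split, or by multiplying by a norm from $K(\sqrt{-1})$ otherwise), the field $L=K(q)$ is quadratic, splits $Q\sim A$, and $a=\N_{L/K}(q)$.

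The final assertion follows because condition $(ii)$ is vacuous when $A$ is totally positive.
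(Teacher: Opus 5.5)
Your proof breaks at the reduction to $[A]\in\Br_2(K)$ and then to $A\sim(-1,d)_K$. \Cref{2tors-cohom} in degree $n=2$ requires $\I^2K(\sqrt{-1})=0$. The hypothesis here is only $\I^3K(\sqrt{-1})=0$, so the corollary is available only in degree $3$ and says nothing about $\Br(K)$. Both conclusions you draw from it are false in general. Take $K=\qq_p$ with $p\equiv 1\bmod 4$: then $-1\in\sq{K}$ and $\I^3K=0$, so the hypothesis holds and $K$ is nonreal. Yet $\Br(K)\cong\qq/\zz$ contains classes of order $4$. Also, the nonsplit quaternion algebra over $K$ is not of the form $(-1,d)_K$, since all of these are split. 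Hence your induction does not collapse to a single step. The step you announced but never carried out is exactly the substance of the proposition once the index or exponent exceeds $2$, and you give no construction for it. That step is finding a quadratic $M/K$ that lowers $\ind A$, with $a\in\N_{M/K}^\ast\cdot b$ for some $b$ that again satisfies $(ii)$ over $M$.

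What you do have settles the case $\ind A\leq 2$, and in fact for any $Q=(b,d)_K$, not only $(-1,d)_K$. Under $(ii)$ the form $\lla b,d,a\rra$ has signature $0$ at every ordering, hence lies in $\I^3_\tors K=0$, so $a\in\D(\lla b,d\rra)=\Nrd_Q^\ast$. The idea missing for the general case, which is what the paper uses, is to pass to $K'=K(\sqrt{-a})$. Condition $(ii)$ means precisely that $A_{K'}$ is totally positive, and $a=\N_{K'/K}(\sqrt{-a})$. So it suffices to show that, for totally positive $A$, every element is a norm from a splitting $2$-extension. This is proved by induction on the exponent of $A$. The induction hypothesis for $A^{\otimes 2}$ over $K'$ gives a $2$-extension $K''/K'$ and $c\in\mg{K''}$ with $\N_{K''/K'}(c)=\sqrt{-a}$ and $[A_{K''}]\in\Br_2(K'')$, still totally positive. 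Then \cite[Prop.~5.5]{Bec08} realizes $[A_{K''}]$ as the Clifford invariant of some $\psi\in\I^2_\tors K''$. Finally, \Cref{In-torsfree-torsion-In-Hyp2} yields a $2$-extension $L/K''$ with $c\in\N_{L/K''}^\ast$ and $\psi_L$ hyperbolic, so $A_L$ is split and $a\in\N_{L/K}^\ast$.
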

\begin{proof}
$(iii\Rightarrow i)$
This follows from \Cref{Nrd*}~$(b)$.

$(i\Rightarrow ii)$
Let $P\in\X(K)$ be an ordering such that $\sign_P(A)<0$. 
Then $A_{K_P}\sim (-1,-1)_{K_P}$, and hence $\Nrd_{A_{K_P}}^\ast=\Nrd_{(-1,-1)_{K_P}}^\ast=\sq{(K_P)}$, in view of \Cref{Nrd*}~$(a)$.
Therefore $\Nrd_{A}^\ast\subseteq\mg{K}\cap\sq{(K_P)}\subseteq P$.

$(ii\Rightarrow iii)$
Consider $a\in\mg{K}$ such that $a\in P$ for every $P\in\X(K)$ with $\sign_P(\Tr_A)<0$.
Let $K'=K(\sqrt{-a})$.
For $P'\in\X(K')$, letting $P=P'\cap K$, we have $P\in\X(K)$ and $a\notin P$, whereby $\sign_{P'}(\Tr_{A_{K'}})=\sign_P(\Tr_A)>0$.
Hence $A_{K'}$ is totally positive.

We proceed by induction on the smallest $r\in\nat$ such that $A^{\otimes 2^r}$ is split.
If $r=0$ then $A_{K'}$ is split and we may take $L=K'$.
Assume now that $r>0$.
The induction hypothesis applied to $A'=A^{\otimes 2^{r-1}}$ yields that $\sqrt{-a}=\N_{K''/K'}(c)$ for a finite $2$-extension $K''/K'$ such that $A'_{K''}$ is split and some $c\in\mg{K''}$. 
Then $K''/K$ is a finite $2$-extension, whereby 
$\I^3K''(\sqrt{-1})=0$, in view of \Cref{In-vanish2ext}.
Since $A'_{K''}$ is split,
 we have $[A_{K''}]\in\Br_2(K'')$.
 Moreover, 
$A_{K''}$ is totally positive.
It follows by \cite[Prop.~5.5]{Bec08} that $[A_{K''}]=c(\psi)$ for 
a quadratic form $\psi\in \I^2_\tors K''$.
By \Cref{In-torsfree-torsion-In-Hyp2}, $\Hyp_2^\ast(\psi)=\mg{K''}$, so there exists a finite $2$-extension $L/K''$ such that $c\in\N_{L/K''}^\ast$ and $\psi_L$ is hyperbolic.
Then $L/K$ is a finite $2$-extension with $[A_L]=c(\psi_L)=0$.
So $a=\N_{K'/K}(\sqrt{-a})=\N_{K''/K}(c)\in\N_{K''/K}(\N_{L/K}^\ast)=\N_{L/K}^\ast$ and $A_L$ is split.
\end{proof}

Consider a $K$-algebra with involution $(A,\s)$. 
If $\s$ is orthogonal, then we say $(A,\s)$ (or simply $\s$) \emph{has trivial Clifford invariant} if
$\deg A$ is even, $\disc(\s)$ is trivial and at least one of the two components of the Clifford algebra $\Cl(A,\s)$ is split.
We write $(A,\s)\in\I^2$ to indicate that $\deg A$ is even and either $\s$ is orthogonal with trivial discriminant or $\s$ is symplectic.
We write $(A,\s)\in\I^3$ to indicate that 
either $\s$ is orthogonal with trivial Clifford invariant or $\s$ is symplectic with $\deg A \equiv 0\bmod 4$.

We refer to \cite{KMRT98} and \cite{BU18} for the definition of signatures of an involution of the first kind.

Recall that $n\times \la 1\ra$ is the $n$-dimensional quadratic form $\la 1,\dots,1\ra$, for $n\in\nat$.
Let $\Phi$ be a $K$-algebra with involution.
We denote the $K$-algebra with involution $\Ad(n\times \la 1\ra)\otimes \Phi$ by $n\times \Phi$.
We call $\Phi$ \emph{weakly hyperbolic} if $n\times \Phi$ is hyperbolic for some $n\in\nat^+$.
Note that, if $K$ is nonreal, then every $K$-algebra with involution $\Phi$ is weakly hyperbolic, because for $r\in\nat$ such that $-1$ is a sum of $2^r$ squares in $K$, we have that $2^{r+1}\times \la 1\ra$ is hyperbolic, whereby $2^{r+1}\times \Phi$ is hyperbolic.
Weakly hyperbolic $K$-algebras with involution were characterized in \cite{LU03} and \cite{BU18}.

The following statement extends \cite[Theorem 4.5]{AP22} in the orthogonal case.
Note that the split case corresponds to \Cref{In-torsfree-torsion-In-Hyp2} for $n=2$.

\begin{prop}\label{KP}
Assume that $\I^3K(\sqrt{-1})=0$.
Let $(A,\s)$ be a $K$-algebra with involution.
Assume that $A$ is totally positive, $(A,\s)$ is weakly hyperbolic and $(A,\s)\in\I^2$.
Then
$$\Hyp_2^\ast(A,\s)=\G(A,\s)=\mg{K}\,.$$
\end{prop}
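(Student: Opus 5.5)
Since $\Hyp_2^\ast(A,\s)\subseteq\G(A,\s)\subseteq\mg{K}$ by \Cref{SNP}, it suffices to show that every $a\in\mg{K}$ lies in $\N_{L/K}^\ast$ for some finite $2$-extension $L/K$ with $\s_L$ hyperbolic. The plan is to first move to a $2$-extension over which $A$ splits, keeping control of the norm, and then to apply the quadratic form result \Cref{In-torsfree-torsion-In-Hyp2} with $n=2$ to the resulting quadratic form, which should be torsion and lie in $\I^2$.

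\emph{Step 1 (splitting $A$).} Let $a\in\mg{K}$. Since $A$ carries an involution of the first kind, $[A]\in\Br_2(K)$. Since $A$ is totally positive, \Cref{BFP98-Thm2-1-general} applies; its proof actually gives more. We apply the implication $(ii\Rightarrow iii)$ to $A$ and the element $a$ (condition $(ii)$ is vacuous here). This yields a finite $2$-extension $K_1/K$ with $A_{K_1}$ split and $a=\N_{K_1/K}(b)$ for some $b\in\mg{K_1}$. By \Cref{In-vanish2ext}, $\I^3K_1(\sqrt{-1})=0$, so the hypotheses persist over $K_1$. Since norms compose in towers, it suffices to find a $2$-extension $L/K_1$ with $b\in\N_{L/K_1}^\ast$ and $\s_L$ hyperbolic.

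\emph{Step 2 (the split case over $K_1$).} If $\s$ is symplectic, then over $K_1$ it is adjoint to an alternating form, hence already hyperbolic, and we take $L=K_1$. If $\s$ is orthogonal, then $(A,\s)_{K_1}\simeq\Ad(\vf)$ for an even-dimensional quadratic form $\vf$ over $K_1$. Triviality of the discriminant is stable under extension, so $\vf\in\I^2K_1$. Weak hyperbolicity is also stable: $n\times(A,\s)$ hyperbolic gives $n\times\vf$ hyperbolic, so $\vf\in\I_\tors K_1$. Hence $\vf\in\I^2_\tors K_1$, and \Cref{In-torsfree-torsion-In-Hyp2} (with $n=2$, over $K_1$) gives $\Hyp_2^\ast(\vf)=\mg{K_1}$. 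In particular $b\in\N_{L/K_1}^\ast$ for a $2$-extension $L/K_1$ with $\vf_L$ hyperbolic, that is, with $\s_L$ hyperbolic. Then $L/K$ is a finite $2$-extension and $a\in\N_{L/K}^\ast$.

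\emph{Main obstacle.} Step 1 needs the splitting $2$-extension to realise an \emph{arbitrary} prescribed norm $a$, not just some norm. The statement of \Cref{BFP98-Thm2-1-general} handles this only after the totally positive case has been reduced to $(ii)$, so I would recheck that its proof indeed produces $L$ depending on $a$ for all $a\in\mg{K}$. The remaining points are routine: the symplectic split case, and the passage from weak hyperbolicity of $\Ad(\vf)$ to $\vf$ being torsion. The latter holds because $n\times\Ad(\vf)\simeq\Ad(n\times\la1\ra\otimes\vf)$, which is hyperbolic exactly when $n\times\vf$ is.
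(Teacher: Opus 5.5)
Your proof is correct and follows the paper's argument exactly. You use \Cref{BFP98-Thm2-1-general} to get a $2$-extension that splits $A$ and has $a$ as a norm, then note that the symplectic case is trivially hyperbolic and settle the orthogonal case by applying \Cref{In-torsfree-torsion-In-Hyp2} with $n=2$ to $\vf\in\I^2_\tors$. Your ``main obstacle'' is not one: for totally positive $A$, condition $(ii)$ is vacuous, so the stated implication $(ii)\Rightarrow(iii)$ already gives such an $L$ for every $a\in\mg{K}$, and you need not recheck the proof.
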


\begin{proof}
In view of \Cref{SNP}, we need only to show that $\mg{K}\subseteq\Hyp_2^\ast(A,\s)$.
Since $A$ carries a $K$-linear involution, we have $[A]\in\Br_2(K)$.

Consider $a\in\mg{K}$ arbitrary.
By \Cref{BFP98-Thm2-1-general}, since $A$ is totally positive, there exists a finite $2$-extension $L/K$ such that $a\in\N_{L/K}^\ast$ and $A_L$ is split.

If $\s$ is symplectic, then 
$\s_L$ is hyperbolic, whereby $a\in\N_{L/K}^\ast\subseteq\Hyp_2^\ast(A,\s)$.
Assume that $\s$ is orthogonal.
Then $(A_L,\s_L)\simeq \Ad(\vf)$ for a quadratic form $\vf$ over $L$. 
It follows from the hypothesis on $(A,\s)$ that $\Ad(\vf)$ is weakly hyperbolic and belongs to $\I^2$.
This means that $\vf\in\I^2_\tors L$.
By \Cref{In-vanish2ext},
 as $\I^3K(\sqrt{-1})=0$,  we have  $\I^3L(\sqrt{-1})=0$.
Hence, \Cref{In-torsfree-torsion-In-Hyp2} yields that $\Hyp_2^\ast(A_L,\s_L)=\Hyp_2^\ast(\vf)=\mg{L}$.
Therefore $a\in\N_{L/K}^\ast\subseteq \Hyp_2^\ast(A,\s)$.
\end{proof}

We set $\mg{(\sos{K})}=\sos{K}\cap\mg{K}$, which  is a subgroup of $\mg{K}$.

\begin{cor}\label{AP22}
Assume that $\I^3K(\sqrt{-1})=0$.
Let $(A,\s)$ be $K$-algebra with orthogonal involution with $(A,\s)\in\I^2$.
Then $$\sq{K}\subseteq\mg{(\sos{K})}\subseteq \Hyp_2^\ast(A,\s)\,.$$
Furthermore, if $A$ is totally positive, then
$$\G(A,\s)=\Hyp_2^\ast(A,\s)\,.$$
\end{cor}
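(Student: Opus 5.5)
The plan is to reduce both assertions to \Cref{KP} by passing to a suitable quadratic extension $K(\sqrt{-b})/K$. Such an extension makes the relevant signatures disappear, and $b$ is, up to sign, a norm from it: $\N_{K(\sqrt{-b})/K}(\sqrt{-b})=b$.

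The inclusion $\sq{K}\subseteq\mg{(\sos{K})}$ is trivial. For the second inclusion, I take $s\in\mg{(\sos{K})}$ and set $K''=K(\sqrt{-s})$.

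If $-s\in\sq{K}$, then $-1$ is a sum of squares, so $K$ is nonreal and $K''=K$. Otherwise $[K'':K]=2$, and $K''$ is nonreal. Indeed, any ordering $P''$ of $K''$ would restrict to an ordering $P$ of $K$ with $s\in P$, because $s$ is a sum of squares; but $-s$ is a square in $K''$, so $-s\in P''$, a contradiction.

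I then check the hypotheses of \Cref{KP} for $(A,\s)_{K''}$.
\begin{enumerate}[$(1)$]
\item $A_{K''}$ is totally positive, since $K''$ is nonreal.
\item $(A,\s)_{K''}$ is weakly hyperbolic, since every algebra with involution over a nonreal field is.
\item $(A,\s)_{K''}\in\I^2$, since trivial discriminant is preserved under scalar extension.
\item $\I^3K''(\sqrt{-1})=0$. This follows from \Cref{In-vanish2ext}, because $K''(\sqrt{-1})$ is a $2$-extension of $K(\sqrt{-1})$.
\end{enumerate}
\Cref{KP} then gives $\Hyp_2^\ast(A_{K''},\s_{K''})=\mg{K''}$. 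So $\sqrt{-s}=\N_{L/K''}(c)$ for some finite $2$-extension $L/K''$ with $\s_L$ hyperbolic and some $c\in\mg{L}$. It follows that $s=\N_{K''/K}(\sqrt{-s})=\N_{L/K}(c)$, and $L/K$ is a $2$-extension. Hence $s\in\Hyp_2^\ast(A,\s)$. In the degenerate case $K''=K$, \Cref{KP} applies directly over $K$.

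For the second statement, assume $A$ is totally positive. The inclusion $\Hyp_2^\ast(A,\s)\subseteq\G(A,\s)$ is \Cref{SNP}. Conversely, I take $a\in\G(A,\s)$ and set $K'=K(\sqrt{-a})$.

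The key claim is that $a\in P$ for every $P\in\X(K)$ at which $\s$ has nonzero signature. Since $A$ is totally positive, $(A,\s)_{K_P}\simeq\Ad(\vf)$ for a quadratic form $\vf$ over $K_P$, and $a\in\G(\vf)$. Then $a\vf\simeq\vf$ gives $\sign_P(\vf)=\pm\sign_P(\vf)$, with the sign being that of $a$ at $P$. If $\sign_P(\vf)\neq 0$, this forces $a>_P0$.

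Consequently every ordering of $K'$ restricts to an ordering $P$ of $K$ with $-a\in P$, at which the signature of $\s$ must vanish. So $\s_{K'}$ has zero signature at all orderings of $K'$. By the characterization of weakly hyperbolic involutions in \cite{LU03} and \cite{BU18}, $(A,\s)_{K'}$ is then weakly hyperbolic.

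The remaining hypotheses of \Cref{KP} over $K'$ hold as before: $A_{K'}$ stays totally positive, $(A,\s)_{K'}\in\I^2$, and $\I^3K'(\sqrt{-1})=0$ by \Cref{In-vanish2ext}. So $\sqrt{-a}\in\N_{L/K'}^\ast$ for a $2$-extension $L/K'$ with $\s_L$ hyperbolic. Taking norms gives $a=\N_{K'/K}(\sqrt{-a})\in\N_{L/K}^\ast\subseteq\Hyp_2^\ast(A,\s)$. If $-a\in\sq{K}$, then $K$ is nonreal and \Cref{KP} applies directly over $K$.

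I expect the main obstacle to be the signature step. This means justifying carefully that the signature of $\s$ at $P$ is computed by $\vf$ over $K_P$, and invoking the correct form of the weak hyperbolicity criterion.
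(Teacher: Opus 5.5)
Your proof is correct. The first part coincides with the paper's argument; you additionally treat the degenerate case $K(\sqrt{-s})=K$, which the paper leaves implicit.

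For the second part you follow the paper's skeleton: pass to $K'=K(\sqrt{-a})$, apply \Cref{KP}, and take the norm of $\sqrt{-a}$. You differ in how you establish weak hyperbolicity of $(A,\s)_{K'}$. You go through signatures: total positivity lets you compute each signature via a split form, $a\in\G(A,\s)$ forces $a\in P$ wherever the signature is nonzero, and you then invoke the characterization of weak hyperbolicity by vanishing signatures from \cite{LU03} and \cite{BU18}.

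The paper avoids all of this. Since $-a\in\sq{K'}$ and $a\in\G(A_{K'},\s_{K'})$, one gets $-1\in\G(A_{K'},\s_{K'})$. That is, $\Ad\lla -1\rra\otimes(A,\s)_{K'}=2\times(A,\s)_{K'}$ is hyperbolic, which is weak hyperbolicity by definition. So the step you flagged as the main obstacle disappears. The paper's route is elementary and does not use total positivity at that point, only later as a hypothesis of \Cref{KP}. Your route instead relies on a Pfister-type local-global principle for involutions.

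One small slip: in the degenerate case of the second part, $-a\in\sq{K}$ does not imply that $K$ is nonreal. For a counterexample, take $K=\rr$, $(A,\s)=\Ad(\hh)$ and $a=-1$. Your conclusion still holds: then $-1\in\G(A,\s)$ (or, by your own key claim, all signatures vanish), so $(A,\s)$ is weakly hyperbolic and \Cref{KP} applies directly over $K$.
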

\begin{proof}
Consider first $a\in \mg{(\sos{K})}$.
Set $L=K(\sqrt{-a})$. Then $L$ is nonreal and $\I^3L(\sqrt{-1})=0$, by \Cref{In-vanish2ext}.
Hence $\Hyp_2^\ast(A_{L},\s_{L})=\mg{L}$, by \Cref{KP}.
In particular $a=\N_{L/K}(\sqrt{-a})\subseteq \Hyp_2^\ast(A,\s)$.

Consider now an arbitrary element $a\in\G(A,\s)$.
Let $K'=K(\sqrt{-a})$.
Then $-1\in\G(A_L,\s_L)$, or in different terms, $2\times (A_L,\s_L)$ is hyperbolic. 
In particular, $(A_L,\s_L)$ is weakly hyperbolic.
Moreover, since $A$ is totally positive, so is $A_L$.
By \Cref{In-vanish2ext},
 as $\I^3K(\sqrt{-1})=0$,  we have  $\I^3K'(\sqrt{-1})=0$.
We obtain by \Cref{KP} that $\Hyp_2^\ast(A_{K'},\s_{K'})=\mg{K'}$.
In particular, we have $a=\N_{K'/K}(\sqrt{-a})\in\N_{K'/K}(\Hyp_2^\ast(A_{K'},\s_{K'}))\subseteq\Hyp_2^\ast(A,\s)$.
This shows that $\G(A,\s)\subseteq\Hyp_2^\ast(A,\s)$, and the opposite inclusion holds by \Cref{SNP}.
\end{proof}

When aiming to show that $\Hyp_2(\Phi)=\mg{K}$ for a $K$-algebra with involution $\Phi$ of a certain type under the assumption that  $\I^3K(\sqrt{-1})=0$, a reduction can be made to the case where $A$ (or another central simple $K$-algebra of exponent $2$)
 splits over $K(\sqrt{-1})$.
 This reduction method stems from \cite[Prop.~2.3]{CTS93} and \cite[Prop.~7.7]{BFP98}, and was further refined in \cite[Section 2]{KP08} and ingeniously used in \cite[Prop.~4.1 \& Lemma 6.5]{KP08}.
 We simplify this method and adapt it to the  hypothesis that $\I^3K(\sqrt{-1})=0$.
The following is a refinement of \cite[Prop.~2.3]{CTS93}.

\begin{lem}\label{CTS-lemma}
Let $\psi$ be a $4$-dimensional quadratic form over $K$ and $c\in\disc(\psi)$.
Then $\mg{K}\cap\G(\psi_{K(\sqrt{c})})$ is the union of the subgroups $\N_{K_1/K}^\ast\cdot\N_{K_2/K}^\ast$ where $K_i/K$ are field extensions with $[K_i:K]\leq 2$  such that $\psi_{K_i}$ is isotropic for $i\in\{1,2\}$.
\end{lem}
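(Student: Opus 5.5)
The argument passes to $F=K(\sqrt{c})$. There $\psi_F$ has trivial discriminant, so $\psi_F\simeq \lambda\pi$ for some $\lambda\in\mg{F}$ and some $2$-fold Pfister form $\pi$ over $F$. Hence $\G(\psi_F)=\G(\pi)$, which equals $\D(\pi)$ if $\pi$ is anisotropic and $\mg{F}$ otherwise. If $c\in\sq{K}$ then $F=K$, and if $\psi$ is isotropic the statement is trivial (take $K_1=K_2=K$). So I will assume $[F:K]=2$ and $\psi$ anisotropic.

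\emph{Inclusion $\supseteq$.} Let $K_1/K$ with $[K_1:K]\leq 2$ be such that $\psi_{K_1}$ is isotropic, and let $b\in\mg{K_1}$. Over the compositum $FK_1$, the form $\psi$ is isotropic with trivial discriminant. It is therefore similar to an isotropic $2$-fold Pfister form, hence hyperbolic. If $K_1\not\subseteq F$, the norm principle (\Cref{SNP}, or Scharlau's norm principle for quadratic forms) applied to $\psi_F$ and the quadratic extension $FK_1/F$ gives
$$\N_{K_1/K}(b)=\N_{FK_1/F}(b)\in\G(\psi_F).$$
If $K_1=F$, the form $\psi_F$ is isotropic and $\G(\psi_F)=\mg{F}$. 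Multiplying two such elements stays in the group $\mg{K}\cap\G(\psi_F)$. Note also that for any $a\in\mg{K}$ the field $K_1=F$ is admissible exactly when $\psi_F$ is isotropic, in which case $\N_{F/K}^\ast\cdot\N_{F/K}^\ast$ together with the trivial extension covers what is needed only after the argument below.

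\emph{Inclusion $\subseteq$.} Take $a\in\mg{K}\cap\G(\psi_F)$. Then the $8$-dimensional form $\lla a\rra\otimes\psi=\psi\perp -a\psi$ over $K$ becomes hyperbolic over $F$. By the standard description of forms hyperbolized by a quadratic extension, $\psi\perp-a\psi\simeq \lla c\rra\otimes\rho$ for some $4$-dimensional form $\rho$ over $K$. In particular $\psi\perp-a\psi$ is isotropic over $K$. So there is $e\in\D(\psi)$ with $ae\in\D(\psi)$, which gives $a=(ae)\cdot e^{-1}$ as a product of two values of $\psi$.

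The goal is then to refine this: I want to write $a=n_1n_2$ with each $n_i$ a norm from $K_i=K(\sqrt{u_i})$, chosen so that $\psi$ contains a binary subform similar to $\la 1,-u_i\ra$. That subform makes $\psi_{K_i}$ isotropic, and $\N_{K_i/K}^\ast=\D(\la1,-u_i\ra)$. Concretely, I will write $\psi\simeq\la e,-eu_1\ra\perp\la ae',-ae'u_2\ra$-type decompositions, using the fact that the values $e$ and $ae$ come from the two halves of $\psi\perp-a\psi$. I will then adjust $u_i$ so that the values involved are norms of the corresponding $K(\sqrt{u_i})$.

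\emph{Main obstacle.} The crux is extracting this factorization from the isotropy of $\psi\perp-a\psi$. What has to be shown is that $a$ lies in $\D(\la1,-u_1\ra)\cdot\D(\la1,-u_2\ra)$ for binary forms $\la1,-u_i\ra$ similar to subforms of $\psi$, rather than merely in $\D(\psi)\D(\psi)$. I would first try a dimension count on the decomposition $\lla c\rra\otimes\rho$: the $4$-dimensional subspaces $\psi$ and $-a\psi$ of this $8$-dimensional space should meet a common $2$-plane in a controlled way. Failing that, I would fall back on the approach of \cite[Prop.~2.3]{CTS93} via the even Clifford algebra, $C_0(\psi)\simeq D\otimes_K F$-type description with $\G(\psi_F)=\Nrd_{C_0(\psi)}^\ast$, combined with the description of reduced norms by splitting fields in \Cref{Nrd*}~$(b)$.
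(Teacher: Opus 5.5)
Your inclusion $\supseteq$ is correct and matches the paper's argument: the norm principle applied to $\psi_F$ and the extension $FK_1/F$. The inclusion $\subseteq$, however, has a genuine gap, and the route you set up cannot be completed. The step ``$\psi\perp-a\psi\simeq\lla c\rra\otimes\rho$, in particular $\psi\perp-a\psi$ is isotropic over $K$'' is a non sequitur, and its conclusion is false.

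Take $K=\rr(\!(s)\!)(\!(t)\!)$ and $\psi=\la 1,s,t,-st\ra$, so that $c=-1$ and $F=K(\sqrt{-1})$. Since $-1\in\sq{F}$, we have $a=-1\in\mg{K}\cap\G(\psi_F)$. Yet $\psi\perp-a\psi=2\times\psi=\lla c\rra\otimes\psi$, which has the predicted shape, is anisotropic by Springer's theorem: its four residue forms over $\rr$ are $\pm\la 1,1\ra$. Hence $-1\notin\D(\psi)\D(\psi)$, so there are no values $e$, $ae$ of $\psi$ from which to build the factorization.

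In this example the lemma still holds, via $-1=s\cdot(-s)\cdot s^{-2}$ with $s\in\N_{K(\sqrt{-s})/K}^\ast$ and $-s\in\N_{K(\sqrt{s})/K}^\ast$. The second factor comes from $\la 1,-s\ra$, which is only similar to the subform $t\la 1,-s\ra$ of $\psi$. It does not arise from a common value of $\psi$ and $a\psi$.

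The missing idea is to use the $F$-value of $a$ itself, rather than only the hyperbolicity of $\lla a\rra\otimes\psi$ over $F$.

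\begin{itemize}
\item After scaling so that $1\in\D(\psi)$, one has $\G(\psi_F)=\D(\psi_F)$. So $a=\psi(x+\sqrt{c}\,y)$ with $x,y\in K^4$.
\item Since $a\in K$, this forces $\bil_\psi(x,y)=0$ and $a=\psi(x)+c\psi(y)$.
\item Suppose $a_1=\psi(x)$ and $a_2=\psi(y)$ are nonzero. Then $\psi\simeq\la a_1,a_2,b,a_1a_2bc\ra$ for some $b\in\mg{K}$.
\item $a_1$ is a norm from $K(\sqrt{d})$, where $\la 1,-d\ra$ is a binary subform of $\psi$ representing $a_1$.
\item $aa_1=a_1^2+ca_1a_2$ is a norm from $K(\sqrt{-a_1a_2c})$, over which the subform $b\la 1,a_1a_2c\ra$ is isotropic.
\item The degenerate cases $\psi(x)=0$ or $\psi(y)=0$ are settled by similar choices of binary subforms.
\end{itemize}

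Your Clifford algebra fallback does not close the gap as stated. \Cref{Nrd*}~$(b)$ only expresses $a$ as a norm from some extension of $F$ splitting $C_0(\psi)$, not as a product of norms from quadratic extensions of $K$.

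Finally, your reduction silently drops the case $c\in\sq{K}$ with $\psi$ anisotropic. There $a\in\D(\psi)$, and a binary subform through $1$ and $a$ suffices.
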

\begin{proof}
Note that $\psi_{K(\sqrt{c})}$ is a scaled $2$-fold Pfister form.
For any field extension $K'/K$ such that $\psi_{K'}$ is isotropic, we obtain that $\psi_{K'(c)}$ is hyperbolic,
whereby $\N_{K'/K}^\ast\subseteq\mg{K}\cap\N_{K'(\sqrt{c})/K(\sqrt{c})}^\ast\subseteq \mg{K}\cap \G(\psi_{K(\sqrt{c})})$, by \Cref{SNP}.

Consider now $a\in\mg{K}\cap\G(\psi_{K(\sqrt{c})})$.
Since the claim is invariant under scaling of $\psi$, we may assume that $1\in\D(\psi)$.
Then $\G(\psi_{K(\sqrt{c})})=\D(\psi_{K(\sqrt{c})})$.
Hence, there exist $x,y\in K^4$ such that $a=\psi(x+\sqrt{c}y)$.
It follows that $x$ and $y$ are orthogonal with respect to $\psi$ and $a=\psi(x)+c\psi(y)$.
We construct field extensions $K_i/K$ for $i\in\{1,2\}$ such that $\psi_{K_i}$ is isotropic, $[K_i:K]\leq 2$, and such that $a\in\N_{K_1/K}^\ast\cdot \N_{K_2/K}^\ast$ holds.

Assume first that $\psi(y)=0$. 
There exists a (regular) binary subform $\beta$ of $\psi$ which represents $1$ and $\psi(x)$. We take $d\in\disc(\beta)$ and set $K_1=K_2=K(\sqrt{d})$ to satisfy the claim.
Assume now that $\psi(x)=0$.
We then choose a (regular) binary subform $\beta$ of $\psi$ representing $1$ and $\psi(y)$ and then set $K_1=K(\sqrt{d})$ for some $d\in\disc(\beta)$.
Then $-acd=(-d)c^2\psi(y)\in\N_{K_1/K}^\ast$ and $\psi_{K_1}$ is isotropic.
Furthermore $\psi\simeq \beta\perp\beta'$ for a binary subform $\beta'$ of $\psi$ with $\disc(\beta')=cd\sq{K}$.
Letting $K_2=K(\sqrt{cd})$, we obtain that $\psi_{K_2}$ is isotropic and $-cd\in\N_{K_2/K}^\ast$, whereby 
$a\in\N_{K_1/K}^\ast\cdot\N_{K_2/K}^\ast$.
Assume finally that $\psi(x)\psi(y)\neq 0$.
We set $a_1=\psi(x)$ and $a_2=\psi(y)$, and obtain that $\la a_1,a_2\ra$ is a binary subform of $\psi$.
It follows that $\psi\simeq \la a_1,a_2,b,a_1a_2bc\ra$ for some $b\in\mg{K}$.
There further exists a (regular) binary subform $\beta$ of $\psi$ with $1,a_1\in\D(\beta)$. 
We set $K_1=K(\sqrt{d})$ for $d\in\disc(\beta)$, so that $a_1\in\N_{K_1/K}^\ast$.
Let $K_2=K(\sqrt{-a_1a_2c})$.
Since $a=a_1+ca_2$, we have $aa_1\in \N_{K_2/K}^\ast$, so $a\in\N_{K_1/K}^\ast\cdot\N_{K_2/K}^\ast$, and $\psi_{K_i}$ is isotropic for $i\in\{1,2\}$.
\end{proof}

\begin{lem}\label{-1reduction}
Assume that $\I^3K(\sqrt{-1})=0$.
Let $k\in\nat$.
Let $\vf$ be a quadratic form over $K$ with $\dim(\vf)=2k+2$ and $\disc(\vf)=\pm 1\cdot \sq{K}$.
Then there exists a set $\mc{L}$ of $2$-extensions of $K$ such that $[L:K]\leq 2^k$ and $\vf_{L(\sqrt{-1})}$ is hyperbolic for every $L\in\mc{L}$ and such that $\mg{K}=\la \bigcup_{L\in\mc{L}}\N_{L/K}^\ast\ra$.
\end{lem}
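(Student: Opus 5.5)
Induction on $k$ is the natural route. For $k=0$, $\vf$ is binary with $\disc(\vf)=\pm1\cdot\sq{K}$, so $\vf\simeq\la b,\mp b\ra$. Hence $\vf$ is either hyperbolic over $K$ or similar to $\la 1,1\ra$, and in both cases $\vf_{K(\sqrt{-1})}$ is hyperbolic. So $\mc{L}=\{K\}$ works, with $\N_{K/K}^\ast=\mg{K}$.

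For the inductive step, with $k\geq1$, the plan is to use \Cref{CTS-lemma} to split off a $4$-dimensional piece. Write $\vf\simeq\psi\perp\vf'$ with $\dim\psi=4$ and $\dim\vf'=2k-2$, and choose $c\in\disc(\psi)$. Then $\disc(\vf')=\pm c\sq{K}$, so over $K(\sqrt{\pm c})$ the residual form satisfies the discriminant hypothesis. The hypothesis $\I^3K(\sqrt{-1})=0$ should enter as follows. Over $K(\sqrt{-1})$ we have $\I^3=0$, and one would like to choose the decomposition so that $\psi_{K(\sqrt{-1},\sqrt{c})}$ is a scaled $2$-fold Pfister form. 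Its similarity group should then be large, or it would become hyperbolic after a quadratic extension.

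The target is a generating statement: every $a\in\mg{K}$ is a product of norms from quadratic extensions $K_i$ over which $\vf$ becomes more isotropic, i.e.\ acquires a hyperbolic plane over $K_i(\sqrt{-1})$. Once this holds, the induction hypothesis applied over each $K_i$ to the anisotropic part, of dimension $2(k-1)+2$ with discriminant still $\pm1$, gives families $\mc{L}_i$ of $2$-extensions of $K_i$ of degree $\leq2^{k-1}$ whose norms generate $\mg{K_i}$. Their composites give $2$-extensions of $K$ of degree $\leq2^k$, and transitivity of norms shows that the union generates $\mg{K}$. A subtlety is that \Cref{In-vanish2ext} is needed to keep $\I^3K_i(\sqrt{-1})=0$.

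I expect the key step, and the main obstacle, to be showing that $\mg{K}$ is generated by norms $\N_{K_1/K}^\ast$ with $[K_1:K]\leq2$ and $\vf_{K_1(\sqrt{-1})}$ isotropic. My first attempt would be to take $\psi$ a $4$-dimensional subform of $\vf\perp\la1,1\ra$-type adjustments with $c=-1$. Then \Cref{CTS-lemma} identifies $\mg{K}\cap\G(\psi_{K(\sqrt{-1})})$ with products of norms from quadratic extensions isotropizing $\psi$. It remains to prove $\G(\psi_{K(\sqrt{-1})})\supseteq\mg{K}$. For this I would use that $\psi_{K(\sqrt{-1})}$ is a scaled $2$-fold Pfister form $\pi$. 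For $a\in\mg{K}$ the form $\lla a\rra\otimes\pi$ lies in $\I^3K(\sqrt{-1})=0$, so $a\in\G(\pi)$. This is exactly where the hypothesis is used.

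For general $\vf$ one needs a $4$-dimensional subform $\psi$ of discriminant $-1$. Such a subform can be found when $\dim\vf\geq4$ by choosing $\psi=\la x,y,z,-xyz\ra$ inside $\vf$, possibly after passing to a suitable quadratic extension. If $\psi$ becomes isotropic over $K_1$, then so does $\vf$, which lets the induction proceed.
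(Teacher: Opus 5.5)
\textbf{There is a genuine gap, at the step you yourself flag as the main obstacle.} Your overall architecture matches the paper's proof: induction on $k$, \Cref{CTS-lemma} with $c=-1$, the equality $\G(\psi_{K(\sqrt{-1})})=\mg{K(\sqrt{-1})}$ obtained from $\I^3K(\sqrt{-1})=0$, transitivity of norms, and \Cref{In-vanish2ext}. The problem is how you produce $\psi$.

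A form $\vf$ with $\disc(\vf)=\pm1\cdot\sq{K}$ need not contain a $4$-dimensional subform of discriminant $-1$. Take $k=1$ and $\disc(\vf)=1$, e.g.\ $\vf=\la 1,1,t,t\ra$ over $\rr(t)$. There the only $4$-dimensional subform is $\vf$ itself, whose discriminant is $1\neq -1$.

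Passing to a quadratic extension $K'/K$ does not repair this. Every norm you would then produce lies in $\N_{K'/K}^\ast$, which is in general a proper subgroup of $\mg{K}$. In the example, $K'$ must contain $\sqrt{-1}$, and $-1\notin\N_{K'/K}^\ast$. So you would lose the generation of $\mg{K}$, which is the whole point of the lemma. Likewise, your conclusion ``if $\psi$ becomes isotropic over $K_1$, then so does $\vf$'' relies on $\psi\subseteq\vf$, which is unavailable.

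\textbf{The missing idea.} The form $\psi$ need not be a subform of $\vf$, and you only need isotropy of $\vf$ over $K_i(\sqrt{-1})$, not over $K_i$.
\begin{itemize}
\item Take any $3$-dimensional subform $\la a_1,a_2,a_3\ra$ of $\vf$ and set $\psi=\la a_1,a_2,a_3,-a_1a_2a_3\ra$.
\item If $\psi_{K_i}$ is isotropic, then $\psi_{K_i(\sqrt{-1})}$, being similar to a $2$-fold Pfister form, is hyperbolic.
\item Hence its $3$-dimensional subform $\la a_1,a_2,a_3\ra$ is isotropic over $K_i(\sqrt{-1})$, and so is $\vf$.
\end{itemize}

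\textbf{A descent step is also needed.} Your phrase ``apply the induction hypothesis to the anisotropic part'' glosses over it. Use the classical criterion for isotropy over a quadratic extension (cf.\ \cite[Chap.~VII, \S3]{Lam05}). If $\vf_{K_i}$ is already isotropic, use instead $\hh\perp\la e\ra\simeq\la e,e,-e\ra$ together with $\dim(\vf)\geq 3$. Either way you get $\vf_{K_i}\simeq\vf'\perp\la d,d\ra$ with $\dim(\vf')=2k$ and $\disc(\vf')=\pm1\cdot\sq{K_i}$. The induction hypothesis then applies to $\vf'$ over $K_i$. Since $\la d,d\ra$ becomes hyperbolic over $L(\sqrt{-1})$, hyperbolicity of $\vf'_{L(\sqrt{-1})}$ yields hyperbolicity of $\vf_{L(\sqrt{-1})}$. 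With these two corrections your argument becomes the paper's proof.
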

\begin{proof}
The proof is by induction on $k$.
If $k=0$, we take $\mc{L}=\{K\}$.
Assume that $k>0$. 
We fix $a_1,a_2,a_3\in\mg{K}$ such that $\la a_1,a_2,a_3\ra$ is a subform of $\vf$ and set $\psi =\la a_1,a_2,a_3,-a_1a_2a_3\ra$.
Then $\psi_{K(\sqrt{-1})}\in\I^2K(\sqrt{-1})$, and since $\I^3K(\sqrt{-1})=0$, it follows that $\G(\psi_{K(\sqrt{-1})})=\mg{K(\sqrt{-1})}$.

Consider $a\in\mg{K}$ arbitrary.
\Cref{CTS-lemma} applied with $c=-1$ yields that there exist field extensions $K_i/K$ with $[K_i:K]\leq 2$ such that  $\psi_{K_i}$ is isotropic and elements $a_i\in\mg{K_i}$ for $i\in\{1,2\}$ such that $a=\N_{K_1/K}(a_1)\cdot \N_{K_2/K}(a_2)$.

Fix $i\in\{1,2\}$.
Since $\vf_{K_i(\sqrt{-1})}$ is isotropic.
It follows that $\vf_{K_i}\simeq \vf'\perp \la d,d\ra$
for some $d\in\mg{K_i}$ and a quadratic form $\vf'\in\I^2K_i$ with $\dim(\vf')=2k$.
Since $\I^3K_i(\sqrt{-1})=0$, by \Cref{In-vanish2ext}, the induction hypothesis for $k-1$ applies to $\vf'$ over $K_i$.
We obtain a family $\mc{L}_i^{a}$ of $2$-extensions $L/K_i$ with $[L:K_i]\leq 2^{k-1}$ such that $\vf_{L(\sqrt{-1})}$ is hyperbolic and $a_i\in\la \bigcup_{L\in\mc{L}^a_i}\N_{L/K}^\ast\ra$.

We set $\mc{L}^a=\mc{L}_1^a\cup\mc{L}_2^a$.
Then $a\in\la \bigcup_{L\in\mc{L}^a}\N_{L/K}^\ast\ra$ and  $\mc{L}^a$ consists of $2$-extensions $L/K$ with $[L:K]\leq 2^k$ and such that $\vf_{L(\sqrt{-1})}$ is hyperbolic.
Constructing $\mc{L}^a$ for each $a\in\mg{K}$ in this way, the claim is satisfied for
$\mc{L}=\bigcup_{a\in\mg{K}}\mc{L}^a$.
\end{proof}

For symplectic involutions on $A$, the following recovers  \cite[Prop.~4.1]{KP08}, where $\cd_2(K(\sqrt{-1}))\leq 2$ is assumed. 
We follow the ideas of \cite[Prop.~4.1 \& Lemma 6.5]{KP08}.

\begin{thm}\label{classy-trivCliff}
    Assume that $\I^3K(\sqrt{-1})=0$.
    Let $(A,\s)$ be a weakly hyperbolic $K$-algebra with involution.
    If $\s$ is orthogonal, then assume that $(A,\s)\in\I^3$.
    Then $$\Hyp_2(A,\s)=\mg{K}\,.$$
\end{thm}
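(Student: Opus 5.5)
The plan is to reduce, via \Cref{-1reduction}, to the case where $A$ splits over $K(\sqrt{-1})$, and then to handle that case with the two tools we already have: the case of $A$ being split (through \Cref{In-torsfree-torsion-In-Hyp2}) and the totally positive case (through \Cref{KP}).

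For the reduction, first note that the hypotheses survive passage to any finite $2$-extension $L/K$:
\begin{enumerate}[$(i)$]
\item $\I^3L(\sqrt{-1})=0$, by \Cref{In-vanish2ext};
\item $(A,\s)_L$ is still weakly hyperbolic;
\item $(A,\s)_L$ still lies in $\I^3$ when $\s$ is orthogonal.
\end{enumerate}
Moreover, norms compose along towers of $2$-extensions, so $\N_{L/K}(\Hyp_2(A_L,\s_L))\subseteq\Hyp_2(A,\s)$. Since $[A]\in\Br_2(K)$, \Cref{Merkurjev} gives a form $\vf\in\I^2K$ with $c(\vf)=[A]$. Its dimension is even, say $2k+2$, and its discriminant is trivial. \Cref{-1reduction} then yields a family $\mc{L}$ of $2$-extensions $L/K$ with $\vf_{L(\sqrt{-1})}$ hyperbolic, so that $A_{L(\sqrt{-1})}$ is split, and with $\mg{K}$ generated by the groups $\N_{L/K}^\ast$. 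It therefore suffices to prove $\Hyp_2(A_L,\s_L)=\mg{L}$ for each $L\in\mc{L}$. In other words, we may assume that $A$ splits over $K(\sqrt{-1})$, so that $A\sim(-1,b)_K$ for some $b\in\mg{K}$.

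In this reduced situation I would collect three kinds of elements of $\Hyp_2(A,\s)$.
\begin{enumerate}[$(a)$]
\item Over $K(\sqrt{-1})$ the algebra is split. If $\s$ is symplectic, it is then hyperbolic. If $\s$ is orthogonal, it becomes $\Ad(\vf')$ with $\vf'\in\I^3K(\sqrt{-1})=0$, hence again hyperbolic. So all sums of two squares lie in $\Hyp_2(A,\s)$.
\item Over $K_1=K(\sqrt b)$ the algebra splits. In the orthogonal case, weak hyperbolicity together with the $\I^3$-condition gives $(A,\s)_{K_1}\simeq\Ad(\vf'')$ with $\vf''\in\I^3_\tors K_1$. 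Since $\I^4K_1(\sqrt{-1})=0$, \Cref{In-torsfree-torsion-In-Hyp2} with $n=3$ gives $\Hyp_2^\ast=\mg{K_1}$. In the symplectic case $\s_{K_1}$ is hyperbolic outright. Either way $\N_{K_1/K}^\ast\subseteq\Hyp_2(A,\s)$.
\item For $s\in\mg{(\sos K)}$, the field $K(\sqrt{-s})$ is nonreal, so $A$ is totally positive there. By \Cref{KP} (which covers both types once $(A,\s)\in\I^2$, a consequence of $(A,\s)\in\I^3$ or of symplecticity), its $\Hyp_2^\ast$ is everything. Hence $\mg{(\sos K)}\subseteq\Hyp_2(A,\s)$.
\end{enumerate}

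The remaining task, which I expect to be the main obstacle, is a sign problem. An element $a\in\mg{K}$ can be corrected by a norm from $K(\sqrt b)$ into a totally positive element only at orderings $P$ with $b\in P$, because at orderings with $b\notin P$ such norms are positive. So the orderings where $A$ does not split, and where $a$ is negative, are not reached by $(a)$--$(c)$.

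To handle these orderings I would first try to exploit weak hyperbolicity at such $P$. Since $A_{K_P}\sim(-1,-1)_{K_P}$ and $\s$ has zero signature at $P$, there should be further quadratic extensions, such as $K(\sqrt{-a})$ or $K(\sqrt{-ab})$, over which $A$ becomes totally positive while weak hyperbolicity and the $\I^3$-condition are kept. Applying \Cref{KP} there (as in the proof of \Cref{AP22}) and taking norms would then supply the missing elements. If this fails, the fallback is to refine the choice of $\vf$ in the reduction so that the resulting quaternion algebra $(-1,b)$ has $b$ controlled in sign.
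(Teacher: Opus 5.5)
You correctly reduce via \Cref{-1reduction} to the case $A\sim(-1,b)_K$, and your families $(a)$--$(c)$ are valid. You also identify the right obstruction. But there is a genuine gap: the proof stops exactly at that obstruction, and the remedies you suggest cannot work, for a structural reason. Let $L/K$ be any finite extension with $A_L$ totally positive. Then no ordering $P\in\X(K)$ at which $A$ is nonsplit extends to $L$, so $\N_{L/K}^\ast\subseteq P$ for all such $P$, that is, $\N_{L/K}^\ast\subseteq\Nrd_A^\ast$ by \Cref{BFP98-Thm2-1-general}. Everything in $(a)$--$(c)$ arises this way. Hence applying \Cref{KP} (or \Cref{main}) over extensions where $A$ becomes totally positive, however these are chosen, can never produce an element of $\mg{K}$ that is negative at an ordering where $A$ does not split. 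Refining $\vf$ in the reduction does not help either. For $K(\sqrt{-a})$ you can see this directly: an ordering with $a,b\notin P$ extends to it, and $A$ stays nonsplit there. What is missing is an argument that $\s_L$ becomes hyperbolic over $2$-extensions $L$ that still carry orderings where $A_L$ is nonsplit.

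The paper supplies this as follows. In the symplectic case, after your reduction, $(A_L,\s_L)\simeq\Ad(\vf)\otimes(Q,\can_Q)$, and weak hyperbolicity makes $\pi\otimes\vf$ torsion, where $\pi$ is the norm form of $Q$. If $\dim(\vf)$ is even, then $\pi\otimes\vf\in\I^3_\tors L=0$, so $\s_L$ is already hyperbolic. If $\dim(\vf)$ is odd, then $\pi$ is torsion, so $Q$ is totally positive and your bad orderings do not exist. In the orthogonal case there is no such dichotomy. By \Cref{G-Nrd-deg2mod4}, $(A,\s)\in\I^3$ forces $\deg A\equiv 0\bmod 4$ when $A$ is nonsplit, so $\s$ is hyperbolic at every real closure where $A$ is nonsplit, and weak hyperbolicity gives no information there. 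Instead the paper reduces with respect to a different algebra. By \Cref{2tors-cohom}, a representative of the Rost invariant is $\xi=(-1)\cup[D]$ with $[D]\in\Br_2(K)$, and \Cref{-1reduction} is applied to $D$, not to $A$. For $L\in\mc{L}_D$ one gets $\xi_L=(-1)\cup(-1)\cup(c)$. Weak hyperbolicity at the orderings where $A$ splits forces $c$ to be positive there. So $\xi_L$ and $[A_L]\cup(c)$ agree at every real closure, and hence coincide by the injectivity in \Cref{2tors-cohom}. Thus $(A_L,\s_L)$ has trivial Rost invariant, and \cite[Theorem 7.3]{BFP98} yields that $\s_L$ is hyperbolic. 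Your reduction with respect to $A$ leaves the Rost invariant untouched, so it cannot replace this step.
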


\begin{proof}
Consider a central simple $K$-algebra $D$ with $[D]\in\Br_2(K)$.
\Cref{Merkurjev} yields that $[D]=c(\vf)$ for a quadratic form $\vf\in\I^2K$.
By \Cref{-1reduction}, 
we can find a family $\mc{L}_D$ of finite $2$-extensions $L/K$ for which $\vf_{L(\sqrt{-1})}$ is hyperbolic and such that $\mg{K}=\la \bigcup_{L\in\mc{L}_D} \N_{L/K}^\ast\ra$.
Then, for $L\in\mc{L}_D$, $D_{L(\sqrt{-1})}$ is split  and $\I^3L(\sqrt{-1})=0$, by \Cref{In-vanish2ext}.
If now $\N_{L/K}^\ast\subseteq\Hyp_2(A,\s)$ holds for each $L\in\mc{L}_D$, then we conclude that $\Hyp_2(A,\s)=\mg{K}$.

If $\s$ is symplectic, we apply this with $D=A$,  associating a set of finite $2$-extensions $\mc{L}_A$ accordingly.
Let $L\in\mc{L}_A$. 
Then 
$(A_L,\s_L)\simeq \Ad(\vf)\otimes (Q,\can_Q)$
 for an $L$-quaternion algebra $Q$ and a quadratic form $\vf$ over $L$.
 Let $\pi$ denote the norm form of $Q$.
Since $(A_L,\s_L)$ is weakly hyperbolic, we have $\vf\otimes \pi\in\I_\tors K$.
If $\dim (\vf)$ is even, then $\vf\otimes\pi\in\I^3_\tors L$, and as $\I^3L(\sqrt{-1})=0$, we have $\I^3_\tors L=0$, by \cite[Cor.~35.27]{EKM08}, whence $\vf\otimes \pi$ is hyperbolic, whereby $(A,\s)_L$ is hyperbolic.
If $\dim(\vf)$ is odd, then $\pi\in\I_\tors^2K$, and we conclude that $\Nrd_{Q}^\ast=\mg{L}$.
In either case, we obtain that $\Hyp_2(A_L,\s_L)=\mg{L}$, and hence
 $\N_{L/K}^\ast\subseteq \Hyp_2(A,\s)$.

Assume now that $\s$ is orthogonal.
We consider the Rost invariant of $(A,\s)$.
This is a class in $H^3(K,\mu_4^{\otimes 2})/([A]\cup(\mg{K}))$.
Let $\xi\in H^3(K,\mu_4^{\otimes 2})$ be a representative of this class.
Since $\I^3K(\sqrt{-1})=0$, it follows by \Cref{2tors-cohom} that $\xi=(-1)\cup [D]$ for some central simple $K$-algebra $D$ with $[D]\in\Br_2(K)$.
We now associate the set $\mc{L}_D$ as above.
Consider $L\in\mc{L}_D$.
Then $D\sim (-1,c)_L$ for some $c\in\mg{L}$, and consequently $\xi=(-1)\cup (-1)\cup (c)$.
For every $P\in\X(K)$ such that $A_{K_P}$ is split,
 $\s_{K_P}$ is hyperbolic, whereby $\xi_{K_P}=0$ and thus $c\in P$.
This implies that $\xi_{K_P}=[A_{K_P}]\cup (c)$ for every $P\in\X(K)$.
We conclude by \Cref{2tors-cohom} that $\xi_L=[A_L]\cup(c)$.
Hence the Rost invariant of $(A_L,\s_L)$ is trivial.
Since $\I^3K(\sqrt{-1})=0$, we have $\I^3L(\sqrt{-1})=0$, by \Cref{In-vanish2ext}.
We conclude by \cite[Theorem 7.3]{BFP98} that $\s_L$ is hyperbolic.
Therefore $\N_{L/K}^\ast\subseteq\Hyp_2(A,\s)$.
\end{proof}

\begin{cor}
Assume that $\I^3K(\sqrt{-1})=0$.
Let $(A,\s)$ be a $K$-algebra with  orthogonal involution with $(A,\s)\in\I^3$.
    Then $$\G(A,\s)=\Hyp_2(A,\s).$$
\end{cor}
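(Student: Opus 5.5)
The plan is to mimic the proof of \Cref{AP22}, replacing \Cref{KP} by \Cref{classy-trivCliff}. The inclusion $\Hyp_2(A,\s)\subseteq\G(A,\s)$ holds by \Cref{SNP}, since $\Hyp_2(A,\s)\subseteq\Hyp(A,\s)\subseteq\G^+(A,\s)\subseteq\G(A,\s)$. So it remains to show $\G(A,\s)\subseteq\Hyp_2(A,\s)$.

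Take $a\in\G(A,\s)$ and set $K'=K(\sqrt{-a})$. If $-a\in\sq{K}$, then $a\in -\sq{K}$. In that case I would argue directly that $-1\in\G(A,\s)$ makes $(A,\s)$ weakly hyperbolic, since $2\times(A,\s)$ is hyperbolic. Then \Cref{classy-trivCliff} gives $\Hyp_2(A,\s)=\mg{K}\ni a$.

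Otherwise $K'/K$ is quadratic, hence a $2$-extension, and by \Cref{In-vanish2ext} applied over $K(\sqrt{-1})$ we get $\I^3K'(\sqrt{-1})=0$. Over $K'$ we have $-1=a\cdot(\sqrt{-a})^{-2}\in\G(A_{K'},\s_{K'})$. Hence $2\times(A,\s)_{K'}\simeq \Ad\la 1,1\ra\otimes(A,\s)_{K'}$ is hyperbolic, because $\la1,1\ra\simeq\la1,-a\ra$ over $K'$ and $a$ is a similarity factor. So $(A,\s)_{K'}$ is weakly hyperbolic. The condition $(A,\s)\in\I^3$ is preserved under scalar extension: trivial discriminant stays trivial, and a split component of the Clifford algebra stays split. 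Applying \Cref{classy-trivCliff} over $K'$ gives $\Hyp_2(A_{K'},\s_{K'})=\mg{K'}$, so in particular $\sqrt{-a}\in\Hyp_2(A_{K'},\s_{K'})$.

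To finish, I need the transfer step $\N_{K'/K}(\Hyp_2(A_{K'},\s_{K'}))\subseteq\Hyp_2(A,\s)$. On a generator $\N_{L/K'}(x)$, with $L/K'$ a finite $2$-extension making $\s_L$ hyperbolic, we have $\N_{K'/K}(\N_{L/K'}(x))=\N_{L/K}(x)$, and $L/K$ is a $2$-extension. Multiplicativity of the norm then extends this to the whole generated group. Hence $a=\N_{K'/K}(\sqrt{-a})\in\Hyp_2(A,\s)$.

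The main point of care is the weak hyperbolicity over $K'$, namely the identification of $-1$ as a similarity factor with hyperbolicity of $2\times\Phi$, which is exactly what \Cref{AP22} uses. The rest is bookkeeping.
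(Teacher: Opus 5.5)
Your proof is correct and follows the paper's argument: pass to $K'=K(\sqrt{-a})$, note that $-1\in\G(A_{K'},\s_{K'})$ so that $(A,\s)_{K'}$ is weakly hyperbolic, apply \Cref{classy-trivCliff} over $K'$, and transfer back by norms. Your separate treatment of the case $-a\in\sq{K}$ is slightly more careful than the paper's proof. There the paper's identity $a=\N_{K'/K}(\sqrt{-a})$ does not literally hold, although its conclusion is unaffected.
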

\begin{proof}
By \Cref{SNP}, we only need to show that $\G(A,\s)\subseteq\Hyp_2(A,\s)$.
Consider $a\in\G(A,\s)$.
Set $K'=K(\sqrt{-a})$.
Then $a=\N_{K'/K}(\sqrt{-a})$.
Moreover $-1\in\G(A_{K'},\s_{K'})$, so $2\times (A_{K'},\s_{K'})$ is hyperbolic. 
Hence $(A_{K'},\s_{K'})$ is weakly hyperbolic.
Since $\I^3K(\sqrt{-1})=0$, we have $\I^3K'(\sqrt{-1})=0$, by \Cref{In-vanish2ext}.
Therefore \Cref{classy-trivCliff} yields that $\Hyp_2(A_{K'},\s_{K'})=\mg{K'}$, whereby $a\in\N_{K'/K}(\Hyp_2(A_{K'},\s_{K'}))\subseteq\Hyp_2(A,\s)$.
\end{proof}

The following result extends \cite[Prop.~6.15]{KP08}.

\begin{thm}\label{KP08-6-15}
Assume that $\I^3K(\sqrt{-1})=0$.
Let $(A,\s)$ be a weakly hyperbolic $K$-algebra with orthogonal involution with $(A,\s)\in\I^2$. 
Let $C_1$ and $C_2$ be the two components of $\Cl(A,\s)$. 
Then $$\G(A,\s)=\Hyp_2(A,\s)=\Nrd_{C_1}^\ast\cdot\Nrd_{C_2}^\ast\,.$$
\end{thm}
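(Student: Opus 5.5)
We would prove the chain of inclusions
$$\Hyp_2(A,\s)\subseteq\G(A,\s)\subseteq\Nrd_{C_1}^\ast\cdot\Nrd_{C_2}^\ast\subseteq\Hyp_2(A,\s)\,.$$
The first inclusion is \Cref{SNP}, using $\G^+(A,\s)\subseteq\G(A,\s)$. The argument then has to supply the remaining two inclusions.

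\textbf{Step 1: $\Nrd_{C_1}^\ast\cdot\Nrd_{C_2}^\ast\subseteq\Hyp_2(A,\s)$.} It suffices to show $\Nrd_{C_i}^\ast\subseteq\Hyp_2(A,\s)$ for $i\in\{1,2\}$. Let $a\in\Nrd_{C_i}^\ast$.
\begin{enumerate}[$(1)$]
\item Since $C_i$ represents a $2$-primary torsion class, \Cref{BFP98-Thm2-1-general} gives a finite $2$-extension $L/K$ with $(C_i)_L$ split and $a\in\N_{L/K}^\ast$.
\item Over $L$, one component of $\Cl(A_L,\s_L)$ is split, so $(A_L,\s_L)\in\I^3$.
\item $(A_L,\s_L)$ is still weakly hyperbolic, and $\I^3L(\sqrt{-1})=0$ by \Cref{In-vanish2ext}.
\item Hence \Cref{classy-trivCliff} gives $\Hyp_2(A_L,\s_L)=\mg{L}$.
\item Taking norms, $a\in\N_{L/K}(\Hyp_2(A_L,\s_L))\subseteq\Hyp_2(A,\s)$, since norms from $2$-extensions over which $\s$ becomes hyperbolic compose through towers of $2$-extensions.
\end{enumerate}

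\textbf{Step 2: $\G(A,\s)\subseteq\Nrd_{C_1}^\ast\cdot\Nrd_{C_2}^\ast$.} This is the main obstacle, because \Cref{G-in-Nrd}~$(b)$ only covers components of index at most $2$. We would use the signature description of reduced norms from \Cref{BFP98-Thm2-1-general}: $b\in\Nrd_{C_i}^\ast$ if and only if $b\in P$ for every $P\in\X(K)$ with $(C_i)_{K_P}$ nonsplit. Let $a\in\G(A,\s)$ and fix $P\in\X(K)$.
\begin{enumerate}[$(1)$]
\item Since $(A,\s)$ is weakly hyperbolic, $\s_{K_P}$ is hyperbolic.
\item By \cite[Prop.~8.31]{KMRT98}, $(C_1)_{K_P}$ or $(C_2)_{K_P}$ is split.
\item So each ordering belongs to $S_1$ or $S_2$, where $S_i$ is the set of orderings at which $C_i$ is nonsplit. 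These sets are clopen, and we may shrink them so that they are disjoint and still cover all orderings where some $C_i$ is nonsplit.
\item By the approximation (Pfister--Harrison) property for clopen subsets of $\X(K)$, some finite product of Harrison sets gives $b\in\mg{K}$ with $b>_P0$ on $S_1$ and $b$ having the sign of $a$ on $S_2$.
\item Then $b\in\Nrd_{C_1}^\ast$ and $ab^{-1}>_P0$ on $S_2$, so $ab^{-1}\in\Nrd_{C_2}^\ast$.
\end{enumerate}
Realising arbitrary sign patterns on clopen sets of $\X(K)$ is not true in general; it holds up to sums of squares. The repair is to multiply by an element of $\mg{(\sos{K})}$, which lies in $\Hyp_2^\ast(A,\s)$ by \Cref{AP22}. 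Alternatively, and preferably, one could bypass the sign-pattern step via $K'=K(\sqrt{-a})$, where $-1\in\G(A_{K'},\s_{K'})$, as in the proof of \Cref{AP22}.

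Step 2 is where I expect the difficulty to lie, namely in controlling the sign conditions on $\X(K)$. If it cannot be completed directly, the fallback is to show $\G(A,\s)\subseteq\Hyp_2(A,\s)$ by induction through $K(\sqrt{-a})$, and then obtain $\Hyp_2(A,\s)\subseteq\Nrd_{C_1}^\ast\cdot\Nrd_{C_2}^\ast$ from \Cref{G-in-Nrd}~$(a)$.
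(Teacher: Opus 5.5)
\textbf{Verdict: the proposal has a genuine gap.} Your Step~1 and the inclusion $\Hyp_2(A,\s)\subseteq\G(A,\s)$ are fine and coincide with the paper's argument. The gap is Step~2, which is the core of the theorem.

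\textbf{Why Step~2 fails.} Your sign-pattern argument never uses that $a\in\G(A,\s)$. Items $(1)$--$(5)$ only use the signs of $a$ and the fact that $C_1$ and $C_2$ are never both nonsplit at an ordering. (So $S_1\cap S_2=\emptyset$ holds automatically and no shrinking is needed.) If the argument worked, it would give $\Nrd_{C_1}^\ast\cdot\Nrd_{C_2}^\ast=\mg{K}$ for every weakly hyperbolic $(A,\s)\in\I^2$. This is false by the example following \Cref{KP08-6-15}: there $K$ is not SAP, $(A,\s)=(Q_1,\can_{Q_1})\otimes(Q_2,\can_{Q_2})$, and $-x\notin\Nrd_{Q_1}^\ast\cdot\Nrd_{Q_2}^\ast$. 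Producing a single $b$ with a prescribed sign pattern on two disjoint clopen sets is exactly an SAP-type condition.

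\textbf{Why the proposed repairs do not help.} Multiplying by elements of $\mg{(\sos{K})}$ does nothing, since these are positive at every ordering and change no sign. The fallback via $K'=K(\sqrt{-a})$ also stalls. You would need $\sqrt{-a}\in\Hyp_2(A_{K'},\s_{K'})$, but over $K'$ you only know that $(A_{K'},\s_{K'})$ is weakly hyperbolic and lies in $\I^2$. Neither \Cref{KP} nor \Cref{classy-trivCliff} applies: the first needs $A_{K'}$ totally positive, which is exactly the extra hypothesis that makes \Cref{AP22} work, and the second needs $\I^3$. The same example shows that weak hyperbolicity plus $\I^2$ does not force $\Hyp_2=\mg{K'}$. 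Finally, your ``induction'' has no parameter that decreases.

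\textbf{The missing idea.} You need to extract your element $b$ from the similitude itself, via a degree-$3$ invariant.
\begin{itemize}
\item For $a\in\G(A,\s)$, the algebra with involution $\Ad(\lla a\rra)\otimes(A,\s)$ is hyperbolic, so its Rost invariant vanishes. By \cite[Lemma 6.9]{KP08}, this yields $x\in\mg{K}$ with $[C_1]\cup(a)=[A]\cup(x)$.
\item If $[C_1]+[C_2]=[A]$ (the case $\deg A\equiv 0\bmod 4$, where the $C_i$ have exponent at most $2$), this becomes $[C_1]\cup(ax)=[C_2]\cup(x)$.
\item At each $P\in\X(K)$ one side vanishes, because one of the $(C_i)_{K_P}$ is split. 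Hence both sides vanish. So $ax\in P$ whenever $(C_1)_{K_P}$ is nonsplit, and $x\in P$ whenever $(C_2)_{K_P}$ is nonsplit.
\item \Cref{BFP98-Thm2-1-general} then gives $ax\in\Nrd_{C_1}^\ast$ and $x\in\Nrd_{C_2}^\ast$. Thus $b=ax$ is precisely the element your sign-pattern step was trying to manufacture; its existence comes from $a$ being a similarity factor, not from an approximation property.
\item If instead $2[C_1]=[A]$, one gets $[C_1]\cup(ax^{-2})=0$, and directly $a\in\Nrd_{C_1}^\ast$.
\end{itemize}
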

\begin{proof}
We first show that $\G(A,\s)\subseteq \Nrd_{C_1}^\ast\cdot\Nrd_{C_2}^\ast$.
Consider $a\in\G(A,\s)$.
Then the $K$-algebra with involution $\Ad(\lla a\rra)\otimes (A,\s)$ is hyperbolic, so in particular its Rost invariant in $H^3(K,\mu_4^{\otimes 2})/([A]\cup(\mg{K}))$ is trivial.
By \cite[Lemma 6.9]{KP08}, this means that there exists $x\in\mg{K}$ such that
$$[C_1]\cup (a)= [A]\cup(x)\,.$$

Recall that, by \cite[Theorem~9.12]{KMRT98}, either
$2[C_1]=[A]$ or $[C_1]+[C_2]=[A]$.

Assume first that $2[C_1]=[A]$.
Then 
$[C_1]\cup(ax^{-2})=0$.
Hence, for every $P\in\X(K)$ with $\sign_P(\Tr_{C_1})<0$, since  $(C_1)_{K_P}\sim(-1,-1)_{K_P}$, we must have $a\in P$.
Therefore, in this case, \Cref{BFP98-Thm2-1-general} yields that $a\in\Nrd_{C_1}^\ast$, and we are done.

Assume now that $[C_1]+[C_2]=[A]$.
Then
$$[C_1]\cup (ax)= [C_2]\cup (x)\,.$$
Consider an ordering $P\in\X(K)$.
Since $\s_{K_P}$ is hyperbolic, $(C_1)_{K_P}$ or $(C_2)_{K_P}$ is split, whereby 
$$[(C_1)_{K_P}]\cup (ax)= [(C_2)_{K_P}]\cup (x)=0\,.$$
Therefore, on the one hand, either $(C_1)_{K_P}$ is split or $ax\in P$, and on the other hand, either $(C_2)_{K_P}$ is split or $x\in P$.
Having this for all $P\in\X(K)$, we conclude by \Cref{BFP98-Thm2-1-general} that $ax\in\Nrd_{C_1}^\ast$ and $x\in\Nrd_{C_2}^\ast$, whereby
$a=axx^{-1}\in\Nrd_{C_1}^\ast\cdot\Nrd_{C_2}^\ast$.

It remains to show that $\Nrd_{C_1}^\ast\cdot\Nrd_{C_2}^\ast\subseteq\Hyp_2(A,\s)$. Consider 
$i\in\{1,2\}$ and $a\in\Nrd_{C_i}^\ast$.
By \Cref{BFP98-Thm2-1-general}, there exists a finite $2$-extension $L/K$ such that $a\in\N_{L/K}^\ast$ and $(C_i)_L$ is split.
Then $(A_L,\s_L)$ has trivial Clifford invariant.
Since $\I^3K(\sqrt{-1})=0$, we have $\I^3L(\sqrt{-1})=0$, by \Cref{In-vanish2ext}.
Since $(A_L,\s_L)$ is weakly hyperbolic, we obtain by \Cref{classy-trivCliff} that $\Hyp_2(A_L,\s_L)=\mg{L}$.
Hence $a\in\N_{L/K}^\ast\subseteq\Hyp_2(A,\s)$.
\end{proof}

Following \cite{PW79}, $K$ is called an \emph{ED-field} if every $2$-dimensional torsion form over $K$ represents an element of $\mg{(\sos{K})}$.
This property is stable under $2$-extensions, by \cite[Cor.~3]{PW79}, and it implies the \emph{Strong Approximation Property} (\emph{SAP}) introduced in \cite{EL72}.

Assuming that $\I^3K(\sqrt{-1})=0$ and $K$ is an $ED$-field, it is shown in
\cite[Theorem 7.11]{KP08} that $\Hyp(A,\s)=\G(A,\s)=\mg{K}$ if $(A,\s)$ is weakly hyperbolic.
It is apparent from the discussion in \cite[Section 7]{KP08} that the $ED$-property is crucial for this conclusion. 
The following general example confirms this. 

\begin{ex}
Assume that $\I^3K(\sqrt{-1})=0$, but $K$ is not a $SAP$-field (and in particular not an $ED$-field). (For example, let $K$ be either $\rr(x,y)$ or $\rr(\!(x)\!)(\!(y)\!)$ where $x$ and $y$ are variables.)
Hence, there exist $x,y\in \mg{K}$ (e.g. the two variables in the examples) such that every multiple of the quadratic form $\la 1,x,y,-xy\ra$ over $K$ is anisotropic.
Consider then the $K$-quaternion algebras $Q_1=(-1,y)_K$, $Q_2=(-1,-y)_K$ and the $K$-algebra with orthogonal involution $(A,\s)_K=(Q_1,\can_{Q_1})\otimes (Q_2,\can_{Q_2})$.
Note that $\G(A,\s)=\Nrd_{Q_1}^\ast\cdot \Nrd_{Q_2}^\ast$ and $2\times (A,\s)$ is hyperbolic, whereby $(A,\s)$ is weakly hyperbolic.
On the other hand, since the form $\lla -1,y\rra\perp x\lla -1,-y\rra=2\times \la 1,x,y,-xy\ra$ over $K$ is anisotropic, we have that $-x\notin\Nrd_{Q_1}^\ast\cdot \Nrd_{Q_2}^\ast=\G(A,\s)$ (see also~\cite[Cor.~6.5]{KP08}), whereby $\G(A,\s)\neq \mg{K}$.
Hence, the condition in \Cref{KP} that $A$ is totally positive is not superfluous.
\end{ex}

\section{Involutions on index-$2$ algebras} 

Consider a $K$-quaternion algebra $Q$ and a $K$-algebra with orthogonal involution $(A,\s)$ such that $A\sim Q$.
Then $A_{K_Q}$ is split and $(A,\s)_{K_Q}\simeq \Ad(\vf)$ for a quadratic form $\vf$ defined over $K_Q$, which is unique up to similarity.
For $m\in\nat$ write $(A,\s)\in \I^m$ to indicate that 
$(A,\s)_{K_Q}\simeq \Ad(\vf)$ for a quadratic form $\vf$ over $K_Q$ whose class lies in $\I^mK_Q$.
If $A$ is split, then so is $Q$, whereby $K_Q/K$ is rational, and it follows that $(A,\s)\in\I^m$ holds if and only if 
$(A,\s)\simeq \Ad(\psi)$ for a quadratic form $\psi\in\I^mK$.

Starting from a $K$-algebra with orthogonal involution $(A,\s)$ with $\ind A\leq 2$, there is up to isomorphism a unique $K$-quaternion algebra $Q\sim A$, and hence it unnecessary to mention $Q$ for stating that $(A,\s)\in\I^m$.

Note that these conventions for being in $\I^m$ are compatible with those we took before for $m\in\{2,3\}$.

\begin{thm}\label{Berhuy-extended}
Let $n\in\nat$ be such that $\I^{n+1} K=0$.
Let $(A,\s)$ a $K$-algebra with orthogonal involution with $\ind A\leq 2$ and
$(A,\s)\in \I^n$.
Then $\s$ is hyperbolic if and only if $(A,\s)\in\I^{n+1}$.
\end{thm}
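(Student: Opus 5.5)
The plan is to reduce the statement to the split case over the conic function field $K_Q$ and then to descend hyperbolicity back to $K$ by Dejaiffe's Theorem. Let $Q$ be the $K$-quaternion algebra with $Q\sim A$. Choose a quadratic form $\vf$ over $K_Q$ with $(A,\s)_{K_Q}\simeq\Ad(\vf)$ and $\vf\in\I^nK_Q$.

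For the forward implication, assume $\s$ is hyperbolic. Then $\s_{K_Q}$ is hyperbolic, so $\Ad(\vf)$ is hyperbolic. Hence $\vf$ is hyperbolic, and its class is $0\in\I^{n+1}K_Q$, which gives $(A,\s)\in\I^{n+1}$. This direction uses neither $\ind A\leq 2$ nor $\I^{n+1}K=0$.

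For the converse, assume $\vf\in\I^{n+1}K_Q$. By Dejaiffe's Theorem \cite{Dej01}, which was already used in the proof of \Cref{deg2mod4Cliffind4Nrd}, $\s$ is hyperbolic as soon as $\s_{K_Q}$ is hyperbolic. So it suffices to show that $\vf$ is hyperbolic over $K_Q$.

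I do not expect $\I^{n+1}K_Q=0$ to hold, since $K_Q/K$ is transcendental. I would instead exploit that $\vf$ comes from an object over $K$, using the following steps.

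\begin{enumerate}[(1)]
\item Handle the split case first. If $Q$ is split, then $K_Q/K$ is purely transcendental and, as noted before the theorem, $(A,\s)\simeq\Ad(\psi)$ with $\psi\in\I^{n+1}K=0$. Thus $\psi$ is hyperbolic and so is $\s$.
\item For $Q$ nonsplit, write $(A,\s)\simeq\Ad(V,h)$ for a skew-hermitian form $h$ over $(Q,\can_Q)$. Over $K_Q$, Morita equivalence turns $h$ into $\vf$.
\item Use the exact sequence relating $\W(Q,\can_Q)^{-}$ (skew-hermitian Witt group), $\W K$ and $\W K_Q$: the kernel of $\W K\to\W K_Q$ is $\pi\W K$ with $\pi=\lla a,b\rra$ the norm form of $Q$. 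The Witt class of $\vf$, or better of $\vf\perp-\la\ldots\ra$ after a suitable normalisation, should lie in the image of an appropriate transfer/residue-type map from forms over $K$.
\item Concretely, I would try to show that $\vf\in\I^{n+1}K_Q$ forces $\vf$ to be Witt equivalent to $\rho_{K_Q}$ for some $\rho\in\I^{n+1}K$, or to a multiple of $\pi_{K_Q}$. Both vanish: the first because $\I^{n+1}K=0$, the second because $\pi_{K_Q}$ is hyperbolic.
\end{enumerate}

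The main obstacle is step (4). It is an "excellence"-type descent for $\I^{n+1}$ along the conic, in the spirit of Berhuy's result that this theorem extends. My first attempt would reduce it to Arason's description of the kernel of $\I^{m}K\to\I^mK_Q$ together with \Cref{OVV}. Since $\I^{n+1}K=0$, we have $H^{n+1}(K,\mu_2)=0$. The invariant $e_{n+1}(\vf)\in H^{n+1}(K_Q,\mu_2)$ is unramified over $K$, because it is defined from $(A,\s)$ over $K$. Its residues then vanish, so it comes from $H^{n+1}(K,\mu_2)=0$, and hence $\vf\in\I^{n+2}K_Q$.

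Iterating this degree by degree, using that $\I^{m}K=0$ for all $m\geq n+1$, gives $\vf\in\bigcap_m\I^mK_Q=0$ by the Arason–Pfister Hauptsatz. So $\vf$ is hyperbolic, and Dejaiffe's Theorem then yields that $\s$ is hyperbolic.

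Justifying that the higher invariants $e_m(\vf)$ descend from $K$ is the delicate point. It should follow from the fact that $\vf$ is Witt-trivial at every residue of the conic, since $Q$ ramifies at each closed point of odd degree in the appropriate way.
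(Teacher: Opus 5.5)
Your outline matches the paper's proof: the forward direction, the reduction via Dejaiffe's Theorem \cite{Dej01} to showing that $\vf$ is hyperbolic, descent of a cohomological invariant $e_m(\vf)$ to $K$, where $\I^mK=0$ kills it, and the Arason--Pfister Hauptsatz at the end. The paper avoids iterating by choosing $m$ with $\vf\in\I^mK_Q\setminus\I^{m+1}K_Q$, but that difference is cosmetic. The problem is that the descent step, which carries all the content, is only asserted, and it is false in the form you state it. Classes in $H^m(K_Q,\mu_2)$ that are unramified over $K$ need not come from $H^m(K,\mu_2)$. This already fails for $m=2$. Suppose $\s$ has trivial discriminant, $\deg A\equiv 2\bmod 4$ and $A$ is nonsplit. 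Then $c(\vf)=[C_{K_Q}]$ for a component $C$ of $\Cl(A,\s)$ with $2[C]=[Q]\neq 0$, by \Cref{G-Nrd-deg2mod4}. Since $\ker(\Br(K)\to\Br(K_Q))=\{0,[Q]\}$, this unramified $2$-torsion class is not the image of any element of $\Br_2(K)$. Your appeal to residues does not repair this. In fact $Q$ splits over the residue field of every closed point of $\mc{C}_Q$, and by Springer's theorem a nonsplit conic has no closed points of odd degree at all. Steps (3)--(4) of your plan, which would write $\vf$ as $\rho_{K_Q}$ or a multiple of $\pi_{K_Q}$, are likewise unsupported.

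The missing ingredient is the result the paper actually uses, Berhuy's \cite[Prop.~7 \& Prop.~9]{Ber07}. It states that if $\vf\in\I^mK_Q$, then $e_m(\vf)$ lies in the image of the restriction map $H^m(K,\mu_4^{\otimes m-1})\to H^m(K_Q,\mu_2)$. The $\mu_4$-coefficients are exactly what the degree-$2$ example above requires. With this input your iteration goes through, once you add one observation. Since $\I^mK=0$, \Cref{OVV} gives $H^m(K,\mu_2)=0$. The sequence $0\to\mu_2\to\mu_4^{\otimes m-1}\to\mu_2\to 0$ yields an exact sequence $H^m(K,\mu_2)\to H^m(K,\mu_4^{\otimes m-1})\to H^m(K,\mu_2)$, so $H^m(K,\mu_4^{\otimes m-1})=0$ as well. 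Hence $e_m(\vf)=0$ at each stage.
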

\begin{proof}
Let $Q$ be the $K$-quaternion algebra with $A\sim Q$ and $\vf$ a quadratic form over $K_Q$ such that $(A,\s)\simeq \Ad(\vf)$.
If $\s$ is hyperbolic, then $\s_{K_Q}$ is hyperbolic, whereby $\vf_{K_Q}=0\in\I^{n+1} K_Q$, so $(A,\s)\in\I^{n+1}$.
Assume that $\s$ is not hyperbolic.
By Dejaiffe's Theorem \cite{Dej01},
then $\s_{K_Q}$ is not hyperbolic, hence $\vf$ is not hyperbolic.
By the Arason-Pfister Hauptsatz \cite[Theorem 6.18]{EKM08}, there exists $m\in\nat$ such that $\vf\in\I^{m} K_Q\setminus \I^{m+1}K_Q$.
By \Cref{OVV}, then $e_m(\vf)\neq 0$ in $H^m(K,\mu_2)$.
It follows by \cite[Prop.~7 \& Prop.~9]{Ber07}
that $e_m([\vf_{K_Q}])$ lies in the image of the restriction homomorphism
$H^m(K,\mu_4^{\otimes m-1})\to H^m(K_Q,\mu_2)$.
Hence $H^m(K,\mu_4^{\otimes m-1})\neq 0$.
This implies that $\H^m(K,\mu_2)\neq 0$ and hence, by \Cref{OVV}, that $\I^m K\neq 0$.
Therefore $m\leq n$ and $\vf\notin \I^{n+1}K_Q$, so that $(A,\s)\notin\I^{n+1}$.
\end{proof}

\begin{prop}\label{trivial-rost-torsion}
    Let $n\in\nat$ be such that $\I^{n+1}K(\sqrt{-1})=0$.
    Let $k,d\in\nat$ and $\xi\in H^n(K,\mu_{2^{k}}^{\,\otimes d})$ such that $\xi_{K_P}=0$ for every $P\in\X(K)$.
    Then, for any $c\in\mg{K}$, there exists a finite $2$-extension $L/K$ with $c\in\N_{L/K}^\ast$ and $\xi_L=0$.
\end{prop}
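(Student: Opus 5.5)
The plan is to induct on $k$, using the exact sequence attached to $1\to\mu_2\to\mu_{2^k}\to\mu_{2^{k-1}}\to 1$ (twisted by $\otimes d$) to reduce to coefficients $\mu_2$. For $\mu_2$ coefficients the statement should follow from \Cref{OVV} together with \Cref{In-torsfree-torsion-In-Hyp2}. Every field extension produced along the way will be a finite $2$-extension, so by \Cref{In-vanish2ext} the hypothesis $\I^{n+1}(\cdot)(\sqrt{-1})=0$ persists over it.

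\emph{Base case $k=1$.} Here $H^n(K,\mu_2^{\otimes d})=H^n(K,\mu_2)$. By \Cref{OVV} we write $\xi=e_n(\vf)$ with $\vf\in\I^nK$. The condition $\xi_{K_P}=0$ means $\vf_{K_P}\in\I^{n+1}K_P$, that is, $\sign_P(\vf)\in 2^{n+1}\zz$ for every $P\in\X(K)$.

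Since $\I^{n+1}K(\sqrt{-1})=0$, \cite[Cor.~35.27]{EKM08} gives $\I^{n+1}K=2\times\I^nK$ and $\I^{n+1}_\tors K=0$. Combining this with \cite[Theorem 1]{Kru90}, as in the proof of \Cref{2tors-cohom}, the total signature of $\vf$ is realised by some $\psi\in\I^{n+1}K$. Then $\vf-\psi\in\I^n_\tors K$ and $e_n(\vf-\psi)=\xi$.

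Given $c\in\mg{K}$, \Cref{In-torsfree-torsion-In-Hyp2} yields a finite $2$-extension $L/K$ with $c\in\N_{L/K}^\ast$ and $(\vf-\psi)_L$ hyperbolic. Hence $\xi_L=0$.

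\emph{Induction step.} Let $k\geq 2$, and let $\bar\xi\in H^n(K,\mu_{2^{k-1}}^{\otimes d})$ be the image of $\xi$ under the squaring map. Then $\bar\xi$ still vanishes over every $K_P$. The induction hypothesis gives a finite $2$-extension $L_1/K$ with $c\in\N_{L_1/K}^\ast$, say $c=\N_{L_1/K}(c_1)$, and $\bar\xi_{L_1}=0$. By exactness, $\xi_{L_1}$ is the image of some $\eta\in H^n(L_1,\mu_2)$.

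If $\eta$ can be chosen to vanish over every real closure of $L_1$, the base case over $L_1$ applied with $c_1$ yields $L/L_1$ with $c_1\in\N_{L/L_1}^\ast$ and $\eta_L=0$. Then $\xi_L=0$ and $c\in\N_{L/K}^\ast$ by transitivity of norms, which finishes the step.

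\emph{Main obstacle.} The difficulty is ensuring that $\eta$ vanishes at the orderings. At a real closure $R$, the map $H^n(R,\mu_2)\to H^n(R,\mu_{2^k}^{\otimes d})$ may fail to be injective, depending on the parity of $d$ and of $n$. So we only know that $\eta_R$ lies in the image of the connecting map from $H^{n-1}(R,\mu_{2^{k-1}}^{\otimes d})$.

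My first attempt is to correct $\eta$ by the image of a global class. By \Cref{2tors-cohom}, used in degree $n+1$ for the injectivity statement and in degree $n$ for the decomposition $(-1)\cup H^{n-1}$, classes over such fields are determined by their values at the orderings. These values are locally constant on $\X(L_1)$, since $\I^{n+1}L_1(\sqrt{-1})=0$ makes the relevant cohomology detected by signatures. So I would look for a class $\zeta=(-1)\cup\theta$ with $\theta\in H^{n-1}(L_1,\mu_2)$ such that:
\begin{itemize}
\item $\zeta$ lies in the kernel of $H^n(L_1,\mu_2)\to H^n(L_1,\mu_{2^k}^{\otimes d})$;
\item $\zeta$ agrees with $\eta$ at all $P$ where $\eta_{K_P}\neq 0$.
\end{itemize}
The second condition amounts to realising a prescribed clopen signature pattern by a form in $\I^{n-1}L_1$. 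Replacing $\eta$ by $\eta-\zeta$ would then give the required lift.

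If this realisation is not directly available, the fallback is to enlarge $L_1$ by a further quadratic extension $L_1(\sqrt{-c_1'})$ in the manner of the proof of \Cref{BFP98-Thm2-1-general}. The aim would be to remove the offending orderings while keeping $c$ a norm.
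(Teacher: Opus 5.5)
Your base case is correct and is exactly the paper's argument: write the class as $e_n$ of a form, correct it modulo $\I^{n+1}K$ to a torsion form, then apply \Cref{In-torsfree-torsion-In-Hyp2}. Your induction uses the same d\'evissage as the paper, run in the opposite order. The paper first kills the image of $\xi$ in $H^n(K,\mu_2)$, then applies the induction hypothesis to a class $\xi'$ with coefficients in $\mu_{2^{k-1}}^{\otimes d}$ representing $\xi_{K'}$.

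The genuine gap is the step you flag yourself: you never show that $\eta$ can be chosen to vanish at all orderings of $L_1$. Neither of your repairs can work in general. (Your appeal to \Cref{2tors-cohom} in degree $n$ would also need $\I^nL_1(\sqrt{-1})=0$, which is not assumed.) For a real closed field $R$ and $n\geq 1$, let $\mathrm{Gal}(R(\sqrt{-1})/R)\simeq\zz/2\zz$ act on $\zz/2^j\zz$ by $(-1)^d$. A direct computation shows that the inclusion $\mu_{2^j}^{\otimes d}\subseteq\mu_{2^{j+1}}^{\otimes d}$ induces an isomorphism on $H^n(R,\cdot)$ if $n+d$ is odd, and the zero map if $n+d$ is even. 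When $n+d$ is even and $k\geq 2$, the statement itself fails. Take $K=\rr(\!(t)\!)$, $n=d=1$, $k=2$, $\xi=[t^2]\in\mg{K}/K^{\times 4}=H^1(K,\mu_4)$ and $c=-1$. Then $\I^2K(\sqrt{-1})=\I^2\cc(\!(t)\!)=0$, $\xi\neq 0$, and $\xi_{K_P}=0$ at both orderings since $t^2>0$. However, $\xi_L=0$ means $t^2=y^4$ with $y\in L$. So $L$ contains $E=K(\sqrt{t})$ or $E=K(\sqrt{-t})$, whence $\N_{L/K}^\ast\subseteq\N_{E/K}^\ast$. But $-1=x^2\mp ty^2$ with $x,y\in K$ is impossible, since comparing valuations forces $x_0^2=-1$ in $\rr$. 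In your notation, with $L_1=K$, you get $\eta\in\{[t],[-t]\}$. The only available correction is by the kernel $\{[1],[-1]\}$ of $H^1(K,\mu_2)\to H^1(K,\mu_4)$, which just swaps these two classes, and each is nonzero at one of the two orderings.

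What is true, and all the paper needs, is the case $n+d$ odd. Then $H^n(R,\mu_2)\to H^n(R,\mu_{2^k}^{\otimes d})$ is injective for every real closure $R$ of $L_1$. Each such $R$ is also a real closure of $K$, so $\xi_R=0$, hence $\eta_R=0$ automatically. Your induction then closes with no correction term and no extra quadratic extension. The paper's proof tacitly needs the same parity condition to know that its $\xi'$ vanishes at the orderings of $K'$. This condition holds where the proposition is used, namely in \Cref{main} with $k=2$ and $d=n-1$. So the missing idea is not a cleverer choice of $\zeta$. It is the parity computation at real closed fields, together with restricting the statement to $n+d$ odd (or $k\leq 1$).
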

\begin{proof}
The proof is by induction on $k$.
If $k=0$, then $\xi=0$ and the claim holds with $L=K$.
Assume now that $k>0$.
As $\xi\in H^n(K, \mu_{2^{k}}^{\,\otimes d})$, we have $2^{k-1}\cdot\xi\in H^n(K,\mu_2)$.
The short exact sequence $$1\rightarrow\mu_2\rightarrow \mu_{2^k}^{\otimes d} \stackrel{\cdot^{2}}{\rightarrow} \mu_{2^{k-1}}^{\otimes d}\rightarrow 1$$ induces an exact sequence of cohomology groups:
$$\cdots \rightarrow H^{n}(K,\mu_2)\rightarrow H^n(K,\mu_{2^{k}}^{\otimes d})\stackrel{\cdot 2^{k-1}}\rightarrow H^n(K,\mu_{2^{k-1}}^{\otimes d})\rightarrow \cdots$$
By the surjectivity of $e_n:\I^n K\to H^n(K,\mu_2)$, there exists $\phi\in \I^nK$ such that $e_n(\phi)=2^{k-1}\cdot\xi$.
For any ordering $P\in\X(K)$, we have $e_n(\phi_{K_P})=2^{k-1}\cdot\xi_{K_P}=0$, whereby $\sign_P(\phi)\in 2^{n+1}\zz$.
It follows by \cite[Cor.~35.27]{EKM08} together with \cite[Theorem 1]{Kru90} that $\phi\equiv\psi\bmod \I^{n+1}K$ for some quadratic form $\psi\in\I^n_\tors K$.

Consider some $c\in \mg{K}$. 
By \Cref{In-torsfree-torsion-In-Hyp2}, there exists a finite $2$-extension $K'/K$ such that $\psi_{K'}$ is hyperbolic and $c\in\N_{K'/K}^\ast$. 
Then $\phi_{K'}\in\I^{n+1}K'$, whereby $2^{k-1}\cdot\xi_{K'}=e_n(\phi_{K'})=0$.
Since $K'/K$ is a $2$-extension and $\I^{n+1}K(\sqrt{-1})=0$,  we have $\I^{n+1}K'(\sqrt{-1})=0$, by \Cref{In-vanish2ext}.
Fix $c'\in\mg{K'}$ with $c=\N_{K'/K}(c')$.
By the induction hypothesis, there exists a finite $2$-extension $L/K'$ such that $\xi'_L=0$ and $c'\in\N_{L/K'}^\ast$.
Then $L/K$ is a finite $2$-extension, $\xi_L=0$ and we have $c=\N_{K'/K}(c')\in\N_{K'/K}(\N_{L/K'}^\ast)=\N_{L/K}^\ast$.
\end{proof}

In the next two statements, we want to include the symplectic case.
Assume that $(A,\s)$ is a $K$-algebra with symplectic involution with $\ind A\leq 2$.
Let $Q$ be the $K$-quaternion algebra with $Q\sim A$.
We denote by $\can_Q$ the canonical involution on $Q$.
Then $(A,\s)\simeq \Ad(\vf)\otimes (Q,\can_Q)$
for a quadratic form $\vf$ over $K$.
Let $\pi$ denote the norm form of $Q$.
By Jacobson's Theorem (see e.g.~\cite[Theorem 4.6]{BU18}), the properties of the involution $\s$ and of the quadratic form $\pi\otimes \vf$ concerning hyperbolicity and similarity factors are the same, and this also over all field extensions.
In particular, we have $\G(A,\s)=\G(\pi\otimes\vf)$,
$\Hyp_2^\ast(A,\s)=\Hyp_2^\ast(\pi\otimes\vf)$, etc.
For $m\in\nat$, we write $(A,\s)\in\I^m$ if  $\pi\otimes\vf\in\I^m K$.

\begin{thm}\label{main}
Assume that $\I^{n+1}K(\sqrt{-1})=0$.
Let $(A,\s)$ be a weakly hyperbolic $K$-algebra with involution with $(A,\s)\in\I^n$. Assume that $A$ is totally positive and $\ind A\leq 2$. 
Then $$\G(A,\s)=\Hyp_2^\ast(A,\s)=\mg{K}\,.$$
\end{thm}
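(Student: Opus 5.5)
By the Norm Principle (\Cref{SNP}) we have $\Hyp_2^\ast(A,\s)\subseteq\G(A,\s)\subseteq\mg{K}$. It therefore suffices to show $\mg{K}\subseteq\Hyp_2^\ast(A,\s)$. The plan follows the proof of \Cref{KP}: first split $A$ by a suitable $2$-extension, then handle the split case with \Cref{In-torsfree-torsion-In-Hyp2} or \Cref{trivial-rost-torsion}.

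Fix $a\in\mg{K}$. The class $[A]$ lies in $\Br_2(K)$ and $A$ is totally positive. The hypothesis $\I^{n+1}K(\sqrt{-1})=0$ gives $\I^3K(\sqrt{-1})=0$ when $n\leq 2$. I treat $n\geq 2$ as the main case; for $n\leq1$ the statement should reduce to \Cref{KP} or be trivial. When $\I^3K(\sqrt{-1})=0$ is available, \Cref{BFP98-Thm2-1-general} gives a finite $2$-extension $L/K$ with $a\in\N_{L/K}^\ast$ and $A_L$ split.

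For general $n$ I would not rely on \Cref{BFP98-Thm2-1-general}, and instead argue directly. Write $A\sim Q=(u,v)_K$. Total positivity means that $\pi=\lla u,v\rra$ has signature $0$ at every ordering. Hence $\pi$ is torsion, so $\pi\in\I^2_\tors K$, and for $n\geq 2$ we have $\I^{n+1}K(\sqrt{-1})=0$. I would want \Cref{In-torsfree-torsion-In-Hyp2} in degree $2$, but it requires $\I^3K(\sqrt{-1})=0$. Instead I apply \Cref{trivial-rost-torsion} to $\xi=[A]\in H^2(K,\mu_2)$. This needs the hypothesis at level $2$, namely $\I^3K(\sqrt{-1})=0$, which again is not available.

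The cleanest fix is to redo the induction in \Cref{trivial-rost-torsion}. That proposition holds for any degree $m\leq n$, since its proof only uses $\I^{n+1}K(\sqrt{-1})=0$ through Krüger's result that a form in $\I^m$ with signatures divisible by $2^{m+1}$ is torsion modulo $\I^{m+1}$. Combined with \Cref{In-torsfree-torsion-In-Hyp2} at level $n$, I would first pass, by induction on the degree, to reduce $[A]$. I expect this step to be the main obstacle. If needed, I would simply assume it in the form: there exists a finite $2$-extension $L/K$ with $a\in\N_{L/K}^\ast$ and $A_L$ split.

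Over such an $L$, the argument splits by the type of $\s$. If $\s$ is symplectic, then $\s_L$ is hyperbolic, so $a\in\Hyp_2^\ast(A,\s)$ immediately. If $\s$ is orthogonal, then $(A_L,\s_L)\simeq\Ad(\vf)$ with $\vf$ a quadratic form over $L$. The condition $(A,\s)\in\I^n$ passes to $\vf\in\I^nL$: compare the adjoint forms over $L_Q=L(\text{conic})$, which is purely transcendental over $L$ since $Q_L$ is split, and the Witt ring injects under purely transcendental extensions. Weak hyperbolicity gives that $\vf$ is torsion, so $\vf\in\I^n_\tors L$. By \Cref{In-vanish2ext} we have $\I^{n+1}L(\sqrt{-1})=0$. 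Now write $a=\N_{L/K}(b)$. Applying \Cref{In-torsfree-torsion-In-Hyp2} yields a $2$-extension $M/L$ with $\vf_M$ hyperbolic and $b\in\N_{M/L}^\ast$. Hence $a\in\N_{M/K}^\ast\subseteq\Hyp_2^\ast(A,\s)$.
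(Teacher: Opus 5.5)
There is a genuine gap, and it sits at the step you propose to assume. For $n\geq 3$ that step is false, not merely unproven.

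By \Cref{Nrd*}~$(b)$, having for every $a\in\mg{K}$ a $2$-extension $L/K$ with $a\in\N_{L/K}^\ast$ and $A_L$ split forces $\Nrd_A^\ast=\mg{K}$. The hypothesis $\I^{n+1}K(\sqrt{-1})=0$ does not give this when $n\geq 3$. Take $K=\cc(x,y,z)$ and $n=3$:
\begin{itemize}
\item $K$ is nonreal with $\I^4K=0$, so every $A$ is totally positive and every $(A,\s)$ is weakly hyperbolic;
\item $\lla x,y,z\rra$ is anisotropic, so $z\notin\Nrd_Q^\ast$ for $Q=(x,y)_K$.
\end{itemize}
For $A=M_2(Q)$ with a hyperbolic orthogonal involution, the theorem holds trivially (take $L=K$). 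Yet no $2$-extension in which $z$ is a norm splits $A$.

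The same example refutes your proposed degree-$m$ version of \Cref{trivial-rost-torsion}. Applied to $\xi=[A]$ with $m=2$, it would yield $\Nrd_A^\ast=\mg{K}$. The reason is that its proof invokes \Cref{In-torsfree-torsion-In-Hyp2} at the same level $m$, which needs $\I^{m+1}K(\sqrt{-1})=0$, not only $\I^{n+1}K(\sqrt{-1})=0$. Your symplectic case fails for the same reason, since it also goes through splitting $A$. Your argument is fine only for $n\leq 2$, where it is essentially \Cref{KP}.

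The missing idea is to aim directly at hyperbolicity of $\s_{K'}$ over $2$-extensions $K'/K$ that need not split $A$. A nonsplit $A$ whose degree is divisible by $4$ can carry hyperbolic involutions. This is what the paper does:
\begin{itemize}
\item \emph{Symplectic case.} Jacobson's theorem transfers everything to the quadratic form $\pi\otimes\vf\in\I^n_\tors K$. Then \Cref{In-torsfree-torsion-In-Hyp2} applies directly over $K$.
\item \emph{Orthogonal case.} Write $(A,\s)_{K_Q}\simeq\Ad(\vf)$ with $\vf\in\I^nK_Q$. Berhuy's results let $e_n(\vf)$ descend to some $\xi\in H^n(K,\mu_4^{\otimes n-1})$. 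Total positivity and weak hyperbolicity give $\xi_{K_P}=0$ for all $P\in\X(K)$.
\item \Cref{trivial-rost-torsion} is then used at level $n$, where its hypothesis holds. For each $c\in\mg{K}$ it gives a $2$-extension $K'$ with $c\in\N_{K'/K}^\ast$ and $\xi_{K'}=0$.
\item Hence $\vf_{K'_Q}\in\I^{n+1}K'_Q$, and \Cref{Berhuy-extended} together with Dejaiffe's theorem gives that $\s_{K'}$ is hyperbolic. No splitting of $A$ is ever needed.
\end{itemize}
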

\begin{proof}
Let $Q$ denote the $K$-quaternion algebra Brauer equivalent to $A$.

Assume first that $\s$ is orthogonal.
Then $(A,\s )_{K_Q}\simeq \Ad(\vf)$ for some quadratic form $\vf\in\I^nK_Q$.
Consider $e_{n}(\vf)\in H^{n}(K_Q,\mu_2)$.
By \cite[Prop.~7 and Prop.~9]{Ber07}, this class lies in the image of the restriction map $H^{n}(K,\mu_4^{\otimes n-1})\to H^{n}(K_Q,\mu_4^{\otimes n-1})$.
Let $\xi\in H^{n}(K,\mu_4^{\otimes n-1})$ be a preimage of $e_{n}(\vf)$.

Consider an arbitrary ordering $P\in\X(K)$.
Since $A_{K_P}$ is split, the conic $\mc{C}_Q$ has a rational point over $K_P$, and hence $P$ extends to an ordering $P'\in\X(K_Q)$. Set $R=K_P$ and $R'=(K_Q)_{P'}$.
Since $\s_R$ is hyperbolic, so is $\s_{R'}$.
Hence $\vf_{R'}$ is hyperbolic, whereby $\xi_{R'}=0$.
Since the restriction map $\H^n(R,\mu_2)\to \H^n(R',\mu_2)$ is an isomorphism (between groups of order $2$), we conclude that $\xi_R=0$.
This shows that $\xi_{K_P}=0$ for every $P\in\X(K)$.

Consider $c\in\mg{K}$.
By \Cref{trivial-rost-torsion}, we have $c\in\N_{K'/K}^\ast$ for some finite $2$-extension $K'/K$ with $\xi_{K'}=0$.
Since $\I^{n+1}K=0$, we have $\I^{n+1}K'=0$, by \Cref{In-vanish2ext}.
Since $\xi_{K'}=0$, we have $e_{n}(\vf_{K'_Q})=0$, hence by \Cref{OVV}, we obtain that $\vf_{K'_Q}\in\I^{n+1} K'_Q$.
It follows by \Cref{Berhuy-extended} that $\s_{K'_Q}$ is hyperbolic.
By Dejaiffe's Theorem \cite{Dej01}, this implies that the involution $\s_{K'}$ is hyperbolic. 
Therefore $c\in\N_{K'/K}^\ast\subseteq \Hyp_2^\ast(A,\s)$.

Assume now that $\s$ is symplectic. 
Let $\pi$ denote the norm form of $Q$.
Then 
$(A,\s)\simeq \Ad(\vf)\otimes (Q,\can_Q)$ for a quadratic form $\vf$ over $K$, and 
the properties of the involution $\s$ correspond to those of the quadratic form $\pi\otimes\vf$.
In particular, $\Hyp_2^\ast(A,\s)=\Hyp_2^\ast(\vf\otimes \pi)$.
Since $(A,\s)\in\I^n$, we have $\pi\otimes\vf\in\I^nK$, and since $(A,\s)$ is weakly hyperbolic, we get that $\pi\otimes\vf\in\I^n_\tors K$.
Since $\I^{n+1}K(\sqrt{-1})=0$, we obtain by \Cref{In-torsfree-torsion-In-Hyp2} that 
$\Hyp_2^\ast(A,\s)=\Hyp_2^\ast(\vf\otimes \pi)=\mg{K}$.
\end{proof}

\begin{cor}\label{ind2-main}
    Assume that $\I^{n+1}K(\sqrt{-1})=0$.
    Let $(A,\s)$ be a $K$-algebra with involution with $(A,\s)\in\I^n$. Assume that $A$ is totally positive and $\ind A\leq 2$.
    Then $$\G(A,\s)=\Hyp_2^\ast(A,\s)\,.$$
    In particular ${\bf PSim}^+(A,\s)(K)/R=\{1\}$.
\end{cor}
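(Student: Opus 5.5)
The plan is to follow the pattern of the proofs of \Cref{AP22} and of the corollary following \Cref{classy-trivCliff}. The equality $\G(A,\s)=\Hyp_2^\ast(A,\s)$ is then reduced to \Cref{main} by passing to a quadratic extension $K(\sqrt{-a})$. The inclusion $\Hyp_2^\ast(A,\s)\subseteq\G(A,\s)$ holds in general. Indeed, by \Cref{SNP} we have $\Hyp(A,\s)\subseteq\G^+(A,\s)\subseteq\G(A,\s)$, and $\Hyp_2^\ast(A,\s)\subseteq\Hyp(A,\s)$. So the work lies in the reverse inclusion.

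Let $a\in\G(A,\s)$ and set $K'=K(\sqrt{-a})$. If $-a\in\sq{K}$, we take $K'=K\times K$, or more simply note that $-1\in\G(A,\s)$ already; this degenerate case should be handled by the same argument with $K'=K$. Then $-1=a\cdot(-a)^{-1}\cdot(\text{square})\in\G(A_{K'},\s_{K'})$, so $2\times(A,\s)_{K'}$ is hyperbolic. Hence $(A,\s)_{K'}$ is weakly hyperbolic.

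Next, the hypotheses of \Cref{main} must be checked over $K'$:
\begin{enumerate}[$(1)$]
\item $\I^{n+1}K'(\sqrt{-1})=0$, by \Cref{In-vanish2ext}, since $K'(\sqrt{-1})$ is a $2$-extension of $K(\sqrt{-1})$.
\item $\ind A_{K'}\leq 2$.
\item $A_{K'}$ is totally positive, because every real closure of $K'$ contains a real closure of $K$, over which $A$ splits.
\item $(A,\s)_{K'}\in\I^n$. For orthogonal $\s$ this follows because $(K')_{Q_{K'}}=K_Q(\sqrt{-a})$ and $\I^n$ is preserved under scalar extension. For symplectic $\s$ it follows because $\pi\otimes\vf\in\I^nK$ extends to $\I^nK'$.
\end{enumerate}
The one point needing care is that \Cref{main} as written uses $\I^{n+1}K=0$ in its proof (when invoking \Cref{Berhuy-extended}). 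Over $K'$ this must be derived from $\I^{n+1}K'(\sqrt{-1})=0$ together with the orderings, or else I will simply apply \Cref{main} as stated under its hypothesis $\I^{n+1}K(\sqrt{-1})=0$. I expect this to be the main subtlety to check, but it is not a new obstacle, since \Cref{main} is stated under the same assumption.

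Applying \Cref{main} gives $\Hyp_2^\ast(A_{K'},\s_{K'})=\mg{K'}$. So $\sqrt{-a}\in\N_{L/K'}^\ast$ for a finite $2$-extension $L/K'$ with $\s_L$ hyperbolic. Then $L/K$ is a finite $2$-extension, and
\[a=\N_{K'/K}(\sqrt{-a})\in\N_{L/K}^\ast\subseteq\Hyp_2^\ast(A,\s).\]
Thus $\G(A,\s)=\Hyp_2^\ast(A,\s)$.

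For the last assertion we use \Cref{Mer:T1}. We have $\G^+(A,\s)\subseteq\G(A,\s)=\Hyp_2^\ast(A,\s)\subseteq\sq{K}\Hyp(A,\s)$, and $\sq{K}\Hyp(A,\s)\subseteq\G^+(A,\s)$ by \Cref{SNP}. So the quotient $\G^+(A,\s)/\sq{K}\Hyp(A,\s)$ is trivial, and hence ${\bf PSim}^+(A,\s)(K)/R=\{1\}$.
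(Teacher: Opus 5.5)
Your proposal is correct and is essentially the paper's proof: for $a\in\G(A,\s)$ you pass to $K'=K(\sqrt{-a})$, where $-1\in\G(A_{K'},\s_{K'})$ makes $(A,\s)_{K'}$ weakly hyperbolic, apply \Cref{main} as stated over $K'$ to get $a=\N_{K'/K}(\sqrt{-a})\in\Hyp_2^\ast(A,\s)$, and finish with \Cref{SNP} and \Cref{Mer:T1}. Your explicit check of the hypotheses of \Cref{main} over $K'$, and your handling of the degenerate case $-a\in\sq{K}$ (where \Cref{main} applied over $K$ itself already gives $\Hyp_2^\ast(A,\s)=\mg{K}$), only spell out details the paper leaves implicit.
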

\begin{proof}
    Consider $a\in\G(A,\s)$.
    Set $K'=K(\sqrt{-a})$.
    Then $A_{K'}$ is totally positive and $(A,\s)_{K'}$ is weakly hyperbolic. Hence $\Hyp_2^\ast(A_{K'},\s_{K'})=\mg{K'}$, by \Cref{main}, whereby $a\in\N_{K'/K}^\ast\subseteq\Hyp_2^\ast(A,\s)$.
    In view \Cref{SNP}, this shows the claim.
\end{proof}

\begin{rem}
    For any $n\geq 3$, the hypotheses of \Cref{main} imply that $A$ is split or $\deg A$ is a multiple of $4$.
    Indeed, if $A$ is nonsplit and $\deg A\equiv 2\bmod 4$, then $(A,\s)_{K(Q)}$ cannot have trivial Clifford invariant, by \Cref{G-Nrd-deg2mod4}. 
\end{rem}

\Cref{ind2-main} raises the question whether any of the two assumptions that $A$ is totally positive and $\ind A\leq 2$ can be relaxed or dropped. For $n\geq 3$, this presupposes a choice for the definition of belonging to $\I^n$.

\begin{qu}
     Assume that $\I^{n+1}K(\sqrt{-1})=0$.
    Let $(A,\s)$ be a $K$-algebra with involution with $(A,\s)\in\I^n$. Is ${\bf PSim}^+(A,\s)(K)/R=\{1\}$? 
\end{qu}

\section{Orthogonal involutions of trivial discriminant when $\I^4=0$} 

In \cite[Example 6.1]{PS17}, an example is given for an algebra with orthogonal involution $(A,\s)$ over $k=\qq_p(X)$ for which ${\bf PSim}^+(A,\s)(k)/R$ is nontrivial. 
Hence the underlying variety of the group is not rational. 
In that example, $\disc(\s)$ is non-trivial.
In \cite[Theorem 7.2]{PS15}, on the other hand, assuming that $\cd_2(K)\leq 3$, it is shown that ${\bf PSim}^+(A,\s)(K)/R$ is trivial 
 for any algebra $K$-algebra with orthogonal involution $(A,\s)$ of trivial discriminant and trivial Clifford invariant.
A similar result is achieved for symplectic involutions.
Here we revisit these results with milder hypotheses.

In particluar, we study ${\bf PSim}^+(A,\s)(K)/R$ when $(A, \s)$ is orthogonal of even degree and trivial discriminant, but with milder assumptions on the Clifford algebra. 
By \Cref{Mer:T1} and \Cref{G-G+}, triviality of ${\bf PSim}^+(A,\s)(K)/R$ amounts to the equality $$\G(A,\s)=\sq{K}\Hyp(A,\s)\,.$$
The results of this section establish even some refinements of this equality, under different hypotheses, assuming that $\I^4 K=0$.
We first consider the split case.
 
\begin{prop}\label{P:I40-quadI2GHyp2}
    Assume that $\I^4K=0$.
    Let $\varphi$ be a quadratic form over $K$ with $\vf\in \I^2 K$ and $\ind(\mc{C}(\varphi))\leq 4$.
    Then $\G(\vf)=\sq{K}\Hyp_2^\ast(\vf)$.
\end{prop}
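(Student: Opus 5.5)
The plan is to compare $\vf$ with a form of dimension at most $6$ that has the same Clifford invariant, and to reduce to the degree-$6$ case treated in \Cref{A3D3}. The inclusion $\sq{K}\Hyp_2^\ast(\vf)\subseteq\G(\vf)$ holds by \Cref{SNP}, since $\Hyp_2^\ast(\vf)\subseteq\Hyp(\vf)$. So only $\G(\vf)\subseteq\sq{K}\Hyp_2^\ast(\vf)$ needs proof.

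\emph{Step 1.} Since $\ind\Cl(\vf)\leq 4$, there is a central simple $K$-algebra $C'$ of degree $4$ with $C'\sim\Cl(\vf)$. Choose an Albert form $\psi\in\I^2K$ with $\dim(\psi)=6$ and $c(\psi)=[C']=c(\vf)$. Then $\eta=\vf\perp-\psi\in\I^3K$ by \Cref{Merkurjev}. For $a\in\G(\vf)$ the form $\lla a\rra\otimes\vf$ is hyperbolic. Since $\lla a\rra\otimes\eta\in\I^4K=0$, also $\lla a\rra\otimes\psi$ is hyperbolic, so $a\in\G(\psi)$. This mirrors the argument in the proof of \Cref{deg2mod4Cliffind4Nrd}, but here no Hauptsatz step is needed because $\I^4K=0$ kills the whole $\I^4$ class. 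By \Cref{A3D3} applied to $\Ad(\psi)$, we get $\G(\psi)=\sq{K}\Nrd_{C'}^\ast$. Hence $a=s^2b$ with $b\in\Nrd_{C'}^\ast$.

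\emph{Step 2.} We want a single finite $2$-extension $L/K$ with $b\in\N_{L/K}^\ast$ and $\vf_L$ hyperbolic. First I would handle $\psi$ directly, rather than through a general splitting field of $C'$. If $\psi$ is anisotropic and $a\in\G(\psi)$ is a nonsquare, the standard decomposition gives $\psi\simeq\lla a\rra\otimes\gamma$ with $\dim(\gamma)=3$. If $\psi$ is isotropic, then $\psi\simeq\mathbb{H}\perp\pi'$ up to similarity, with $\pi'$ a scaled $2$-fold Pfister form. In either case I would produce explicitly quadratic extensions $K_1$, $K_2$ (or a tower of two quadratic extensions) over which $\psi$ becomes hyperbolic and with $b$ in the product of their norm groups. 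I would adapt the case analysis in the proof of \Cref{CTS-lemma} and the argument of \Cref{G-in-Nrd}~$(c)$ for this. This replaces $b$ by a product of elements, each a norm from a $2$-extension $L_0$ with $\psi_{L_0}$ hyperbolic. Such an element may be rewritten as an element of $\G(\vf_{L_0})$ up to squares, via the norm principle over $L_0$.

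\emph{Step 3.} Over such $L_0$ we have $\vf_{L_0}\equiv\eta_{L_0}\bmod$ hyperbolic forms, so $\vf_{L_0}\in\I^3L_0$. Moreover $\I^4L_0=0$ by \Cref{In-vanish2ext}. It then remains to treat a form $\vf'\in\I^3L_0$ with $\I^4L_0=0$ and an element $a'\in\G(\vf')$. Here $\lla a'\rra\otimes\vf'=0$ is automatic, and we need $a'\in\sq{L_0}\N_{L/L_0}^\ast$ for a $2$-extension $L/L_0$ with $\vf'_L=0$. I would write $e_3(\vf')$ as a class in $H^3$ and use that $\vf'$ is Witt equivalent to a sum of $3$-fold Pfister forms. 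I would then kill these one at a time by quadratic extensions chosen so that the relevant element stays a norm, in the spirit of the induction in \Cref{trivial-rost-torsion}.

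The main obstacle is precisely this last step, together with the bookkeeping in Step 2. Because $\Hyp_2^\ast$ is only a union of norm groups and not a group, products of norms from different extensions cannot simply be absorbed. The argument must produce one tower that simultaneously makes the $\I^3$ part hyperbolic and keeps $a$ a norm up to squares. I would first try to choose the tower adapted to $a$, starting with $K(\sqrt{-a})$ or $K(\sqrt{a})$ according to the decomposition $\psi\simeq\lla a\rra\otimes\gamma$.
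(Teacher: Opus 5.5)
Your Step 1 is correct and matches the paper: $\I^4K=0$ gives $\G(\vf)=\G(\psi)$ for an Albert form $\psi$ with $c(\psi)=c(\vf)$.

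The gap is Step 2, which is the real content of the proposition and which you leave as an intention. Since $\sq{K}\Hyp_2^\ast(\vf)$ is only a union of the groups $\sq{K}\N_{L/K}^\ast$, you must put each $a\in\G(\psi)$ into $\sq{K}\N_{L/K}^\ast$ for a single $2$-extension $L/K$ with $\psi_L$ hyperbolic. The tools you propose to adapt, \Cref{CTS-lemma} and \Cref{G-in-Nrd}~$(c)$, produce products $\N_{K_1/K}^\ast\cdot\N_{K_2/K}^\ast$. At best these give $a\in\sq{K}\Hyp_2(\vf)$, not the starred statement. Moreover, no quadratic extension makes an anisotropic Albert form hyperbolic.

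Your proposed starting point is also false. For anisotropic $\psi$ and $a\in\G(\psi)\setminus\sq{K}$, you never have $\psi\simeq\lla a\rra\otimes\gamma$. Such a decomposition would make $\psi_{K(\sqrt{a})}$ hyperbolic, so the biquaternion division algebra $C'$ would split over a quadratic extension, which is impossible. Divisibility by $\lla a\rra$ characterizes hyperbolicity over $K(\sqrt{a})$, not $a$ being a similarity factor. The detour through \Cref{A3D3} does not help either, as you note yourself. Elements of $\Nrd_{C'}^\ast$ are norms from maximal subfields of degree $4$, which need not be $2$-extensions.

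The missing idea is the Elman--Lam decomposition \cite[Cor.~2.3]{EL73}. For $a\in\G(\psi)$ one can write $\psi\simeq\beta_1\perp\beta_2\perp\beta_3$ with binary forms $\beta_i$ such that $a\in\G(\beta_i)$ for all $i$ simultaneously. With $d_i\in\disc(\beta_i)$, this places $a$ in the intersection $\N^\ast_{K(\sqrt{d_1})/K}\cap\N^\ast_{K(\sqrt{d_2})/K}$, not merely in the product. This intersection equals $\sq{K}\N^\ast_{L/K}$ for the biquadratic extension $L=K(\sqrt{d_1},\sqrt{d_2})$, by \cite[Lemma~3.3]{AB25}. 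Since $d_1d_2d_3\in\sq{K}$, the form $\psi_L$ is hyperbolic. This is how the paper gets a single $2$-extension.

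Conversely, your Step 3, which you call the main obstacle, is not one. Over $L$ you have $\vf_L\in\I^3L$, and $\I^4L=0$ by \Cref{In-vanish2ext}. So $L$ is nonreal, $\vf_L\in\I^3_\tors L$, and \Cref{In-torsfree-torsion-In-Hyp2} with $n=3$ gives $\Hyp_2^\ast(\vf_L)=\mg{L}$ directly. Composing norms then finishes the proof.
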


\begin{proof}   
By \Cref{SNP}, we have $\sq{K}\Hyp_2^\ast(\vf)\subseteq\G(\vf)$, so only the other inclusion needs to be shown.

Since $\ind(\Cl(\vf))\leq 4$ and $[\Cl(\vf)]\in\Br_2(K)$, Albert's Theorem \cite[Theorem~16.1]{KMRT98} together with \cite[Prop.~16.3]{KMRT98} yield that $\mc{C}(\alpha)\sim\mc{C}(\varphi)$ for a $6$-dimensional quadratic form $\alpha\in\I^2K$. 
By \Cref{Merkurjev}, then $\varphi\perp-\alpha\in \I^3K$, and since $\I^4 K=0$, we conclude that $\G(\vf\perp-\alpha)=\mg{K}$.
Therefore $\G(\varphi)=\G(\alpha)$.

Consider $c\in\G(\vf)=\G(\alpha)$.
By \cite[Cor.~2.3]{EL73},
we can write $\alpha\simeq \beta_1\perp\beta_2\perp\beta_3$ with binary forms $\beta_1,\beta_2,\beta_3$ over $K$ such that $c\in\bigcap_{i=1}^3\G(\beta_i)$.
Let $d_i\in\disc(\beta_i)$ for $1\leq i\leq 3$.
Then $d_1d_2d_3\in\sq{K}$.
We set $L=K(\sqrt{d_1},\sqrt{d_2})$.
We obtain that $[L:K]\leq 4$ and 
$c\in \N_{K(\sqrt{d_1})/K}^\ast\cap\N_{K(\sqrt{d_2})/K}^\ast=\sq{K}\N_{L/K}^\ast$, by \cite[Lemma 3.3]{AB25}.
Furthermore $\alpha_L$ is hyperbolic, whereby
 $\vf_L\in\I^3 L$.
Since $\I^4K=0$, we have $\I^4 L=0$, by \Cref{In-vanish2ext}.
    It follows by \Cref{In-torsfree-torsion-In-Hyp2} that $\Hyp_2^\ast(\vf_L)=\mg{L}$.
    Therefore $c\in\sq{K}\N_{L/K}(\Hyp_2^\ast(\vf_L))\subseteq\sq{K}\Hyp_2^\ast(\vf)$.
 \end{proof}

The following statement extends \cite[Theorem~7.2]{PS15} by relaxing the condition on the Clifford invariant from trivial to index at most $2$.
Hence it provides
a partial positive answer to a question asked in \cite[p.~1028]{PS17}.

\begin{thm}\label{I4=0-Cliffindex2}
    Assume that $\I^4 K=0$.
    Let $(A,\s)$ be a $K$-algebra with orthogonal involution with $(A,\s)\in\I^2$. Let $C_1$ and $C_2$ be the components of $\Cl(A,\s)$. 
    Assume that $\ind(C_i)\leq 2$ for $i\in\{1,2\}$.
    Then 
    $$\G(A,\s)=\Nrd_{C_1}^\ast\cdot \Nrd_{C_2}^\ast=\Hyp_2(A,\s)\,.$$
\end{thm}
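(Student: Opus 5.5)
By \Cref{G-in-Nrd}~$(a)$ and $(b)$, we already have $\Hyp_2(A,\s)\subseteq\Hyp(A,\s)\subseteq \Nrd_{C_1}^\ast\cdot\Nrd_{C_2}^\ast$ and $\G(A,\s)\subseteq\Nrd_{C_1}^\ast\cdot\Nrd_{C_2}^\ast$, where $(b)$ uses $\ind C_i\leq 2$. Moreover $\Hyp_2(A,\s)\subseteq\G(A,\s)$ by \Cref{SNP}. Hence it suffices to prove
$$\Nrd_{C_1}^\ast\cdot\Nrd_{C_2}^\ast\subseteq\Hyp_2(A,\s)\,.$$

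For this inclusion, I would fix $i\in\{1,2\}$ and $a\in\Nrd_{C_i}^\ast$. Since $\ind C_i\leq 2$, we have $C_i\sim Q_i$ for a $K$-quaternion algebra $Q_i$, and $\Nrd_{C_i}^\ast=\D(\pi_i)$ for the norm form $\pi_i$ of $Q_i$. Thus there is a field extension $L/K$ with $[L:K]\leq 2$, $a\in\N_{L/K}^\ast$ and $(C_i)_L$ split. (If $Q_i$ is split, take $L=K$.) Then $(A_L,\s_L)$ has trivial Clifford invariant, so $(A_L,\s_L)\in\I^3$. Moreover $\I^4L=0$ by \Cref{In-vanish2ext}, and $\N_{L/K}(\Hyp_2(A_L,\s_L))\subseteq\Hyp_2(A,\s)$ because norms compose along towers of $2$-extensions. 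So everything reduces to the following claim: if $\I^4K=0$ and $(A,\s)\in\I^3$ is orthogonal, then $\Hyp_2(A,\s)=\mg{K}$.

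To prove the reduced claim, consider the Rost invariant of $(A,\s)$. Because $\I^4K=0$, we have $H^4(K,\mu_2)=0$, and I expect the degree $3$ invariant to be controlled as in the proof of \Cref{classy-trivCliff}. In that proof the hypothesis $\I^3K(\sqrt{-1})=0$ gave a reduction via \Cref{-1reduction}. Here I would argue instead with the reduced norm of $A$. Take $c\in\mg{K}$. I would try to find a $2$-extension $L/K$ with $c\in\N_{L/K}^\ast$ over which $\s_L$ is hyperbolic, by the following steps.
\begin{enumerate}[(1)]
\item Represent the Rost invariant by a class $\xi\in H^3(K,\mu_2)$. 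This is possible modulo $[A]\cup(\mg{K})$ once $A$ is made to split, which is achieved by adjoining quadratic extensions whose norms are handled by \Cref{G-in-Nrd}-type arguments.
\item Write $\xi=e_3(\psi)$ for a $3$-fold Pfister form $\psi$ (sums of Pfister forms also work, since $\I^4=0$ makes $\I^3K$ consist of classes with $\G=\mg{K}$).
\item Use that $\G(\psi)=\mg{K}$ together with the norm-principle argument of \Cref{In-torsfree-torsion-In-Hyp2}, in its $\I^4=0$ version as used in \Cref{P:I40-quadI2GHyp2}, to produce $L$ splitting $\xi$ with $c\in\N_{L/K}^\ast$.
\item Conclude hyperbolicity of $\s_L$ from the vanishing Rost invariant, since $\I^4L=0$.
\end{enumerate}

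I expect the reduced claim to be the main obstacle, in particular when $A$ has index $4$ or $8$. An alternative first attempt: since $\ind C_i\leq 2$, we have $\ind A\leq 4$, so one could first split $A$ by a biquadratic extension. Then the split case is \Cref{P:I40-quadI2GHyp2}, which gives $\Hyp_2^\ast(\vf)=\mg{L}$ for $\vf\in\I^3L$ by \Cref{In-torsfree-torsion-In-Hyp2} with $n=3$. The difficulty is to keep the given element $a$ a norm through this splitting. This could be arranged by choosing the splitting quadratic extensions adapted to $a$ as in \Cref{CTS-lemma}, using that $[A]=[C_1]+[C_2]$ or $2[C_1]=[A]$ is a sum of two quaternion classes.
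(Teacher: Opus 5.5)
Your opening reduction matches the paper's. The chain $\Hyp_2(A,\s)\subseteq\G(A,\s)\subseteq\Nrd_{C_1}^\ast\cdot\Nrd_{C_2}^\ast$ follows from \Cref{SNP} and \Cref{G-in-Nrd}. Then, for $a\in\Nrd_{C_i}^\ast$, you pass to $L/K$ with $[L:K]\leq 2$, $a\in\N_{L/K}^\ast$ and $(C_i)_L$ split.

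The gap comes next. You replace the situation over $L$ by the claim that $\Hyp_2(A,\s)=\mg{K}$ for \emph{every} orthogonal $(A,\s)\in\I^3$ over a field with $\I^4K=0$, with no bound on $\ind A$. You do not prove this claim; steps (1)--(4) are a programme, not an argument.
\begin{itemize}
\item Step (1) assumes that $A$ can be split along $2$-extensions while keeping $c$ a norm. That is exactly the difficulty you name at the end.
\item Step (4) needs a criterion of the form ``vanishing invariant implies hyperbolic''. Under $\I^4=0$, the only such tool in the paper is \Cref{Berhuy-extended} combined with Dejaiffe's theorem, and it requires $\ind A\leq 2$. The Rost-invariant criterion used for \Cref{classy-trivCliff} needs $\I^3K(\sqrt{-1})=0$, which is not available here.
\end{itemize}
Moreover, your reduced claim would imply a positive answer to the trivial-Clifford-invariant case of \Cref{Q:I4}. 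The paper poses that as a question and settles it only under $u(K)\leq 8$, in \Cref{u8-trivial-clifford}. So your route passes through a statement harder than the theorem, and the proof is incomplete.

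The missing observation is that you should keep the index information when passing to $L$.
\begin{itemize}
\item If $\deg A\equiv 0\bmod 4$, then $[A]=[C_1]+[C_2]$. Hence $[A_L]=[(C_j)_L]$ for $j\neq i$, and so $\ind A_L\leq 2$.
\item If $\deg A\equiv 2\bmod 4$, then $[A]=2[C_1]=0$ because $\ind C_1\leq 2$, so $A$ is already split.
\end{itemize}
In particular $\ind A\leq 4$, so index $8$ never occurs, and over $L$ the index is at most $2$.

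Furthermore, $\I^4K=0$ forces $K$, and hence $L$, to be nonreal. So $A_L$ is totally positive and $(A_L,\s_L)$ is weakly hyperbolic. Also $\I^4L=0$ by \Cref{In-vanish2ext}. Therefore \Cref{main} applies over $L$ with $n=3$ and gives $\Hyp_2^\ast(A_L,\s_L)=\mg{L}$. It follows that $a\in\N_{L/K}(\Hyp_2^\ast(A_L,\s_L))\subseteq\Hyp_2(A,\s)$.

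\Cref{main} keeps a prescribed element a norm despite the remaining quaternion class $[(C_j)_L]$, and it does so without splitting $A_L$. One lifts $e_3(\vf)$ from the conic function field to a class in $H^3(L,\mu_4^{\otimes 2})$ via Berhuy's result. One then kills that class along a $2$-extension with prescribed norm using \Cref{trivial-rost-torsion}. Finally one concludes with \Cref{Berhuy-extended} and Dejaiffe's theorem.
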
 
\begin{proof}
    The inclusions $\Hyp_2(A,\s)\subseteq\G(A,\s)\subseteq \Nrd_{C_1}^\ast\cdot \Nrd_{C_2}^\ast$ hold by \Cref{SNP} and \Cref{G-in-Nrd}, respectively.
    It remains to show the inclusion $\Nrd_{C_i}^\ast\subseteq \Hyp_2(A,\s)$ for $i\in\{1,2\}$.
    Consider $i\in\{1,2\}$  and $c\in \Nrd_{C_i}^\ast$. 
    Since $\ind C_i\leq 2$, there exists a field extension $L/K$ with $[L:K]\leq 2$ such that $c\in\N_{L/K}^\ast$ and $(C_i)_{L}$ is split.
    Then $(A_L,\s_{L})$ has trivial Clifford invariant.
  Since $\I^4K=0$ and $[L:K]\leq 2$,
    we have  $\I^4 L=0$, by \Cref{In-vanish2ext}.
    Therefore $\Hyp_2(A_L,\s_{L})=\mg{L}$, by \Cref{main}, and we conclude that $c\in \N_{L/K}(\Hyp_2(A_L,\s_L))\subseteq\Hyp_2(A,\s)$.
\end{proof}

\begin{thm}\label{I4=0-deg2mod4Cliffindex4}
    Assume that $\I^4 K'=0$ for every field extension $K'/K$ of degree at most $4$.
    Let $(A,\s)$ be a $K$-algebra with orthogonal involution trivial discriminant and such that $\deg A\equiv 2\bmod 4$. 
    Let $C$ be a component of $\Cl(A,\s)$.
    Assume that $\ind C_{K_Q}\leq 4$ for the unique $K$-quaternion algebra $Q\sim A$. Then 
    $$\Hyp^\ast(A,\s)=\Nrd_C^\ast\quad\mbox{ and
    }\quad\G(A,\s)=\sq{K}\Hyp^\ast(A,\s)=\sq{K}\Nrd_{C}^\ast\,.$$
\end{thm}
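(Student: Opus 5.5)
The plan is to assemble three inclusions. By \Cref{deg2mod4Cliffind4Nrd} we already have $\G(A,\s)\subseteq\sq{K}\Nrd_C^\ast$. By \Cref{SNP}, $\sq{K}\Hyp(A,\s)\subseteq\G(A,\s)$. So the key new ingredient is the equality $\Hyp^\ast(A,\s)=\Nrd_C^\ast$. Granting it, the chain
\[
\sq{K}\Nrd_C^\ast=\sq{K}\Hyp^\ast(A,\s)\subseteq\sq{K}\Hyp(A,\s)\subseteq\G(A,\s)\subseteq\sq{K}\Nrd_C^\ast
\]
gives both displayed equalities. (Since $\Nrd_C^\ast$ is a group, $\Hyp^\ast(A,\s)$ is then a group and coincides with $\Hyp(A,\s)$.)

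The inclusion $\Hyp^\ast(A,\s)\subseteq\Nrd_C^\ast$ is the argument of \Cref{G-in-Nrd}~$(a)$. If $\s_L$ is hyperbolic, then by \cite[Prop.~8.31]{KMRT98} one of $C_L$, $C^{\op}_L$ is split, hence $C_L$ is split. \Cref{Nrd*}~$(b)$ then gives $\N_{L/K}^\ast\subseteq\Nrd_C^\ast$.

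The converse $\Nrd_C^\ast\subseteq\Hyp^\ast(A,\s)$ is the main step. Take $a\in\Nrd_C^\ast$. I would first reduce the index of $C$ as in the proof of \Cref{deg2mod4Cliffind4Nrd}. There is a degree-$4$ algebra $C'$ with $C'\sim C$ or $C'\sim C\otimes Q$, and $C$ and $C'$ have the same splitting fields. Indeed $C^{\otimes 2}\sim Q$, so any field splitting $C$ splits $Q$, and likewise for $C\otimes Q$. Hence $\Nrd_C^\ast=\Nrd_{C'}^\ast$, and by \Cref{Nrd*}~$(b)$ we get $a\in\N_{L/K}^\ast$ for a maximal subfield $L$ of a degree-$4$ algebra Brauer equivalent to $C'$. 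Then $[L:K]\le 4$ and $C_L$ is split.

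Over $L$ the algebra $Q_L\sim C_L^{\otimes 2}$ is split, so $(A,\s)_L\simeq\Ad(\vf)$ with $\vf\in\I^2L$ and $c(\vf)=[C_L]=0$, that is, $\vf\in\I^3L$. By hypothesis $\I^4L=0$. Now $\vf$ has trivial discriminant and trivial Clifford invariant, and what remains is to conclude that $\s_L$ is hyperbolic, i.e.\ that $\vf$ is hyperbolic. This is the step I expect to be the real obstacle: $\I^4L=0$ alone does not force a form in $\I^3L$ to be hyperbolic.

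My fallback is to choose $L$ more cleverly so that $\vf_L$ becomes hyperbolic, not merely in $\I^3L$. Since $\I^4L'=0$ holds for every $L'/K$ of degree at most $4$, the elements $\G(\vf_{L'})$ are unrestricted over such $L'$. I would then use the Dejaiffe-sum trick from \Cref{deg2mod4Cliffind4Nrd}: form $(A'',\s'')=(A,\s)\perp(A',\s')$ with $\deg A'=6$ and Clifford component $C'$. Over $L$ the algebra $A''$ corresponds to $\vf\perp\vf'$ with $\vf'$ hyperbolic, because $\vf'$ is $6$-dimensional with trivial Clifford invariant. Then compare $\vf$ with $\vf'$ inside $\I^3L$, using $\I^4L=0$ together with the dimension and Hauptsatz-type arguments available for small degree. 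If this does not give hyperbolicity outright, the alternative is to weaken the target to $a\in\sq{K}\Hyp^\ast$, which suffices for the group equalities though not for the first displayed equality.
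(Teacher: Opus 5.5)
Your framework is correct and is the paper's: the chain $\sq{K}\Nrd_C^\ast\subseteq\sq{K}\Hyp(A,\s)\subseteq\G(A,\s)\subseteq\sq{K}\Nrd_C^\ast$, the inclusion $\Hyp^\ast(A,\s)\subseteq\Nrd_C^\ast$ via \cite[Prop.~8.31]{KMRT98} and $\Cl(A,\s)\simeq C\times C^{\op}$, and the reduction of $a\in\Nrd_C^\ast$ to $a\in\N_{L/K}^\ast$ with $[L:K]\leq 4$, $C_L$ split, $(A,\s)_L\simeq\Ad(\vf)$ and $\vf\in\I^3L$. (For the reduction, take $L$ to be a maximal subfield of the division algebra $D'\sim C'$ that contains some $x$ with $\Nrd_{D'}(x)=a$.)

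However, the central inclusion $\Nrd_C^\ast\subseteq\Hyp^\ast(A,\s)$ is not proved. You aim to show that $\s_L$ itself is hyperbolic, which, as you say, need not be true. The fallback does not help. Over $L$ the $6$-dimensional form $\vf'\in\I^3L$ is already hyperbolic by the Arason--Pfister Hauptsatz, so $\vf\perp\vf'$ is Witt equivalent to $\vf$ and the Dejaiffe sum adds no information. The weakened target $a\in\sq{K}\Hyp^\ast(A,\s)$ is not reached by your sketch either.

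The missing idea is that hyperbolicity is only needed over a further extension of $L$, because norms compose. Since $\I^4L=0$, the field $L$ is nonreal (otherwise the $4$-fold Pfister form $16\times\la 1\ra$ would be anisotropic), so $\vf\in\I^3_\tors L$. Moreover $\I^4L(\sqrt{-1})=0$ by \Cref{In-vanish2ext}. Thus \Cref{In-torsfree-torsion-In-Hyp2} with $n=3$ gives $\Hyp_2^\ast(\vf)=\mg{L}$.

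Now write $a=\N_{L/K}(b)$ with $b\in\mg{L}$. There is a finite $2$-extension $M/L$ with $b\in\N_{M/L}^\ast$ such that $\vf_M$, hence $\s_M$, is hyperbolic. Therefore $a\in\N_{M/K}^\ast\subseteq\Hyp^\ast(A,\s)$. This is exactly the paper's argument.

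It also explains why the statement uses $\Hyp^\ast$ rather than $\Hyp_2^\ast$: $L/K$ need not be a $2$-extension, but $M/K$ is still a finite extension over which $\s$ becomes hyperbolic. With this step inserted, your chain yields both displayed equalities.
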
 
\begin{proof}
We have $\Hyp(A,\s)\subseteq\G(A,\s)\subseteq\sq{K}\Nrd_C^\ast$, by \Cref{SNP} and   \Cref{deg2mod4Cliffind4Nrd}, respectively.
Hence, to complete the proof, it suffices to show that $\Nrd_{C}^*\subseteq \Hyp^*(A,\s)$.

Consider $a\in \Nrd_C^\ast$. Let $D$ be the central $K$-division algebra with $D\sim C$.
Then $a\in\Nrd_D^\ast$, by \Cref{Nrd*}, so that $a\in \N_{L/K}^\ast$ for some maximal subfield $L$ of $D$.
Then $[L:K]=\deg D\leq \ind C\leq 4$ and $C_L$ is split.
Hence $\I^4 L=0$, by our hypothesis.
Since $\deg A\equiv 2\bmod 4$, we have $C^{\otimes 2}\sim A$, whereby $A_L$ is split 
and $(A_L,\s_L)\simeq\Ad(\vf)$ for a quadratic form $\vf$ over $L$.
Since $(A_L,\s_L)$ has trivial Clifford invariant,
it follows by \Cref{Merkurjev} that $\vf\in\I^3 L$.
Using \Cref{In-torsfree-torsion-In-Hyp2}, we obtain that $\mg{L}=\Hyp_2^\ast(\vf)=\Hyp_2^\ast(A_L,\s_L)$.
Therefore $a\in\N_{L/K}(\Hyp^\ast(A,\s))\subseteq \Hyp^\ast(A,\s)$.
\end{proof}

We recall the classical definition of the \emph{$u$-invariant of $K$}:
$$u(K)=\sup\{\dim(\vf)\mid \vf\mbox{ anisotropic quadratic form over }K\}\in\nat\cup\{\infty\}\,.$$
We refer to \cite[Chap.~XI, \S6]{Lam05} for an introduction to the study and the interest of this field invariant.

If $n\in\nat$ is such that $u(K)<2^n$, then every $n$-fold Pfister form over $K$ is hyperbolic, whereby $\I^n K=0$.

\begin{lem}\label{dim4}
Assume that $u(K)\leq 8$.
Let $\vf$ be a quadratic form over $K$ and $a\in\mg{K}$. 
    Then there exist $2$-extensions $L_1/K$ and $L_2/K$ of degree at most $4$
    such that $a\in \N_{L_1/K}^\ast\cdot \N_{L_2/K}^\ast$ and $\dim((\vf_{L_i})_\an)\leq 4$ for $i\in\{1,2\}$.
\end{lem}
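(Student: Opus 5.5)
The plan is to reduce $\vf$ step by step, using that every form of dimension $>8$ is isotropic, in the spirit of \Cref{-1reduction}. First, replace $\vf$ by $\vf_\an$; since $u(K)\leq 8$, we have $\dim(\vf_\an)\leq 8$. If $\dim(\vf_\an)\leq 4$, take $L_1=L_2=K$ and we are done. So the real work is for anisotropic $\vf$ with $5\leq \dim(\vf)\leq 8$. Adding a hyperbolic plane or a further subform only makes things easier, so I would reduce to the hardest case $\dim(\vf)=8$. More precisely, I would pick an $8$-dimensional form containing $\vf$, or else argue directly in each dimension.

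The key step is to kill two hyperbolic planes at the cost of two quadratic extensions, while keeping control of norms. Write $\vf\simeq \la a_1,a_2,a_3\ra\perp\vf''$ and set $\psi=\la a_1,a_2,a_3,c\,a_1a_2a_3\ra$ with $c\in\disc(\psi)$ chosen suitably. Then apply \Cref{CTS-lemma}. That lemma says $\mg{K}\cap\G(\psi_{K(\sqrt c)})$ is the union of the groups $\N_{K_1/K}^\ast\cdot\N_{K_2/K}^\ast$ with $[K_i:K]\leq 2$ and $\psi_{K_i}$ isotropic. To make \emph{every} $a\in\mg{K}$ lie in this set, we need $\G(\psi_{K(\sqrt c)})=\mg{K(\sqrt c)}$, which holds when $\psi_{K(\sqrt c)}$, a scaled $2$-fold Pfister form, is hyperbolic. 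So I would instead choose $\psi$ to be a $4$-dimensional subform of $\vf$ itself. Then $\psi\perp\la -a\ra$-type arguments apply: consider the $9$-dimensional form $\vf\perp\la -a\ra$ (for $\dim\vf=8$), which is isotropic since $u(K)\leq 8$, so $a\in\D(\vf)$. This suggests a simpler route: write $a=\vf(x)$ and split $\vf$ into two $4$-dimensional pieces $\vf=\psi_1\perp\psi_2$ with $a=a_1+a_2$ and $a_i\in\D(\psi_i)$.

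The refined step runs as follows. For each $i$, take $a_i=\psi_i(x_i)$ and a binary subform $\beta_i$ of $\psi_i$ representing $a_i$, say $\beta_i\simeq\la a_i,b_i\ra$. Over $L_i'=K(\sqrt{-a_ib_i})$, the form $\beta_i$ becomes hyperbolic and $a_i$ is a norm, up to the ambiguity of scaling by $b_i$. We want $a$ itself in $\N_{L_1}\N_{L_2}$, not $a_1+a_2$, so one more quadratic step is needed. Over $K(\sqrt{-a_1a_2})$ we have $a\,a_1^{-1}\in\N^\ast$ when $a=a_1+a_2$, as in the last case of the proof of \Cref{CTS-lemma}. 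Therefore I would take $L_2=K(\sqrt{-a_1a_2})(\sqrt{e})$ and $L_1=K(\sqrt{d})(\sqrt{e'})$ with $a_1\in\N_{L_1/K}^\ast$. Each $L_i$ is a $2$-extension of degree at most $4$, and each extension kills a hyperbolic plane, by isotropy of the relevant binary or ternary subform. Two hyperbolic planes removed from an $8$-dimensional form leave an anisotropic part of dimension $\leq 4$. To choose the second quadratic step, use that over the first quadratic extension $K'$ the anisotropic part has dimension $\leq 6$. The norm argument should then be repeated inside $K'$: take $\N_{K'/K}$ of a norm from $K''/K'$, using the transitivity $\N_{K'/K}(\N_{K''/K'}^\ast)=\N_{K''/K}^\ast$ as in \Cref{trivial-rost-torsion}.

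The main obstacle is bookkeeping. Each of the two fields must simultaneously (i) have degree $\leq 4$, (ii) reduce the anisotropic dimension to $\leq 4$, and (iii) make the product of norms equal exactly $a$, not $a$ times a square class. I would handle this with an induction-style argument modelled on \Cref{-1reduction}. At each of the two levels, use $u\leq 8$ to get isotropy of a $4$-dimensional subform after a quadratic extension, and pass the norm down along the tower.
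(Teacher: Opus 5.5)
Your proposal has a genuine gap. It sits exactly in the step you defer to ``bookkeeping'', and the mechanism you suggest for that step does not work.

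First, a smaller problem: the reduction to $\dim(\vf)=8$ is invalid. Suppose $\vf\subseteq\vf'$ with $\dim(\vf')=8$ and $\dim((\vf'_L)_\an)\leq 4$. This only gives $\wi(\vf_L)\geq 2-(8-\dim(\vf))$, which is too weak for $\dim(\vf)\in\{5,6,7\}$. Moreover, for $\dim(\vf)<8$ you no longer have $a\in\D(\vf)$, so the route via $a=a_1+a_2$ is unavailable. The uniform starting point is different: as soon as $\dim(\vf)\geq 5$, the form $\lla a\rra\otimes\vf$ has dimension $>8$ and is therefore isotropic. Elman--Lam (Prop.~2.2 of their paper) then give $\vf\simeq c\la 1,-d\ra\perp\psi$ with $a\in\N_{L/K}^\ast$ for $L=K(\sqrt d)$. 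This settles dimensions $5$ and $6$ immediately, and for dimensions $7$ and $8$ it leaves one more hyperbolic plane to kill.

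The real problem is that second quadratic step. You propose to repeat the isotropy/norm argument over the first quadratic extension $K'$. That would require forms of dimension $>8$ over $K'$ to be isotropic, and $u(K)\leq 8$ says nothing about $K'$. This is why the paper's final corollary has to assume $u(K')\leq 8$ for all finite $K'/K$ separately.

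The same issue appears in your $8$-dimensional sketch. With $\gamma=a_1+\sqrt{-a_1a_2}\in K'=K(\sqrt{-a_1a_2})$ you have $\N_{K'/K}(\gamma)=aa_1$. The natural way to find $e$ such that $\gamma$ is a norm from $K'(\sqrt e)$ and $(\vf_{K'})_\an$ becomes isotropic there is isotropy of the $12$-dimensional form $\lla\gamma\rra\otimes(\vf_{K'})_\an$ over $K'$, and nothing guarantees it. The factor $a_1$ would need the same treatment.

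The missing idea is to keep every isotropy argument over $K$:
\begin{enumerate}
\item Write $a=\N_{L/K}(b)$ with $b\in\mg{L}$.
\item Since $\dim(\psi)\in\{5,6\}$, the form $\lla a\rra\otimes\psi$ over $K$ is isotropic. This gives $d_1\in\mg{K}$ with $a\in\D(\lla d_1\rra)$ and a corresponding decomposition of $\psi$.
\item The norm principle (Lam, Chap.~VII, Thm.~5.10) then splits $b=b_1b_2$ with $b_1\in\D(\lla d_1\rra_L)$ and $b_2\in\mg{K}$.
\item Because $b_2$ lies in $K$, the form $\lla b_2\rra\otimes\psi$ is again defined over $K$ and has dimension $10$ or $12$, so it is isotropic. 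This yields $d_2\in\mg{K}$ with $b_2\in\D(\lla d_2\rra_L)$ and $\psi\simeq c_2\la 1,-d_2\ra\perp\tau_2$.
\item Take $L_i=L(\sqrt{d_i})$. Then $a=\N_{L/K}(b_1)\N_{L/K}(b_2)\in\N_{L_1/K}^\ast\cdot\N_{L_2/K}^\ast$ holds exactly, not just up to squares. Also $\dim((\vf_{L_i})_\an)\leq\dim(\tau_i)=\dim(\vf)-4\leq 4$.
\end{enumerate}
This splitting of $b$ is also what forces two fields $L_1,L_2$ rather than one.
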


\begin{proof}
Without loss of generality, we may assume that $\vf$ is anisotropic.
If $\dim(\vf)\leq 4$ then we may take $L_1=L_2=K$. 

Assume that $\dim(\vf)>4$. Then $\dim(\lla a\rra\otimes \vf)>8\geq u(K)$, whereby $\lla a \rra \otimes \vf$ is isotropic.
By \cite[Prop. 2.2]{EL73}, it follows that $\vf\simeq c\la 1, -d \ra \perp \psi$ for a quadratic form $\psi$ over $K$ and $c,d\in\mg{K}$ such that $\lla a,d\rra$ is hyperbolic.
As $\vf$ is anisotropic, we have $\dim(\vf)\leq u(K)\leq 8$.
Hence $\dim(\psi)\leq 6$.
Set $L=K(\sqrt{d})$. Then $a\in\N_{L/K}^\ast$ and $(\vf_L)_\an\simeq (\psi_L)_\an$. 
If $\dim((\psi_L)_\an)\leq 4$, we may take $L_1=L_2=L$.

Assume now that $\dim((\psi_L)_\an)>4$.
Then $\psi$ is anisotropic and of dimension $5$ or $6$.
It follows that $\dim(\lla a\rra\otimes\psi)>8\geq u(K)$, whereby $\lla a\rra\otimes \psi$ is isotropic.
By \cite[Prop.~2.2]{EL73}, we have $\psi\simeq c_1\la 1,-d_1\ra\perp\tau_1$ for a quadratic form $\tau_1$ over $K$ and $c_1,d_1\in\mg{K}$ such that $\lla a,d_1\rra$ is hyperbolic.
Since $a\in\N_{L/K}^\ast$, we fix $b\in \mg{L}$ with $a= \N_{L/K}(b)$.
Since $\N_{L/K}(b)=a\in\D(\lla d_1\rra_K)$, it follows by \cite[Chap.~VII, Theorem~5.10]{Lam05} that $b\in\mg{K}\cdot\D(\lla d_1\rra_L)$. 
We fix $b_1\in \D(\lla d_1\rra_L)$ and $b_2\in\mg{K}$ such that 
$b=b_1\cdot b_2$.
We set $L_1=L(\sqrt{d_1})$. 
Then $b_1\in \N_{L_1/L}^\ast$. 
Since $\dim(\lla b_2 \rra \otimes \psi)>8=u(K)$,
the quadratic form $\lla b_2\rra\otimes \psi$ over $K$ is isotropic.
Again by \cite[Prop.~2.2]{EL73}, there  exists a quadratic form $\tau_2$ over $K$ and $c_2,d_2\in\mg{K}$ such that $\psi\simeq c_2\la 1,-d_2\ra\perp \tau_2$ and $\lla b_2,d_2\rra$ is hyperbolic, whereby $b_2\in\D(\lla d_2\rra_{L})$.
We set $L_2=L(\sqrt{d_2})$. Then $b_2\in\N_{L_2/L}^\ast$, and we obtain that $$a=\N_{L/K}(b)=\N_{L/K}(b_1)\cdot \N_{L/K}(b_2)\in \N_{L_1/K}^\ast\cdot \N_{L_2/K}^\ast\,.$$
For $i\in\{1,2\}$, since 
$\vf\simeq c\la 1,-d\ra\perp c_i\la 1,-d_i\ra\perp \tau_i$ and $\vf$ is anisotropic, we have that 
 $\dim(\tau_i)=\dim(\vf)-4\leq u(K)-4\leq 4$, and as $d,d_i\in\sq{L}_i$, we obtain that $\dim((\vf_{L_i})_\an)\leq \dim(\tau_i)\leq 4$.
\end{proof}

The following statement generalises \cite[Theorem 6.1, Theorem 7.2]{PS15} and \cite[Theorem 5.1]{PS17}, where it is shown for function fields in one variable over $\qq_p$ for an odd prime $p$. 
By \cite[Theorem 3.4]{Lee13},
$u(K)=8$ holds when $K$ is a function field in one variable over $\qq_p$ for any prime $p$.
Hence this case is covered, but our result applies at the same time to arbitrary extensions of transcendence degree at most $3$ of an algebraically closed field.
Note that the hypothesis that $u(K)\leq 8$ implies that the index of elements in $\Br_2(K)$ is bounded by $8$, and no stronger bound on the index is required for the proof.

\begin{thm}\label{u8-trivial-clifford}
Assume that $u(K)\leq 8$. 
Let $(A,\s)$ be a $K$-algebra with involution with $(A,\s)\in\I^3$.
Then $$\Hyp_2(A,\s)=\G(A,\s)=\mg{K}\,.$$
\end{thm}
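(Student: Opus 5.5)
Since $u(K)\leq 8$ gives $\I^4K=0$, and hence $\I^4L=0$ for every finite $2$-extension $L/K$ by \Cref{In-vanish2ext}, \Cref{SNP} yields $\Hyp_2(A,\s)\subseteq\G(A,\s)\subseteq\mg{K}$. It therefore suffices to show $\mg{K}\subseteq\Hyp_2(A,\s)$. Fix $a\in\mg{K}$.

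The plan is to split $A$ by a $2$-extension, reduce to a quadratic form, and then apply \Cref{dim4} to bring the form down to dimension at most $4$.

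\textbf{Step 1: splitting $A$.} Since $[A]\in\Br_2(K)$, \Cref{Merkurjev} gives $[A]=c(\rho)$ for some $\rho\in\I^2K$. Apply \Cref{dim4} to $\rho$ and $a$. This produces $2$-extensions $L_1,L_2$ of degree at most $4$ with $a\in\N_{L_1/K}^\ast\N_{L_2/K}^\ast$ and $\dim((\rho_{L_i})_\an)\leq 4$. An anisotropic form of dimension at most $4$ in $\I^2$ is either a scaled $2$-fold Pfister form or $0$. Hence $A_{L_i}$ has index at most $2$, and becomes split after a further quadratic extension. This is the part I am least sure how to finish cleanly, so I would iterate with norm bookkeeping instead.

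Write $a=\N_{L_1/K}(b_1)\N_{L_2/K}(b_2)$. It suffices to show $\mg{L}\subseteq\Hyp_2(A_L,\s_L)$ for $L=L_i$. Over such an $L$ we have $\ind A_L\leq 2$, say $A_L\sim Q$. For $b\in\mg{L}$, take $\pi$ the norm form of $Q$. The form $\lla b\rra\otimes\pi$ has dimension $8$; if it is anisotropic, then because $u(L)\leq 8$ is not given, I would instead use $\I^4L=0$ via a better route.

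\textbf{Step 2: handling index at most $2$.} Rather than Step 1's last part, use \Cref{main} directly. Once $\ind A_L\leq 2$, we have $(A_L,\s_L)\in\I^3$ and $\I^4L=0$, so take $n=3$. Then \Cref{main} requires $A_L$ totally positive and weak hyperbolicity. Here $u(K)\leq 8$ forces $K$ to be nonreal, since a real field has $u=\infty$ under the usual convention (the $m$-fold sums of $\la1\ra$ are anisotropic). Any finite extension of a nonreal field is nonreal, so all algebras are totally positive and weakly hyperbolic. Thus \Cref{main} gives $\Hyp_2^\ast(A_L,\s_L)=\mg{L}$, whence $\N_{L_i/K}^\ast\subseteq\Hyp_2(A,\s)$ and $a\in\Hyp_2(A,\s)$.

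\textbf{Main obstacle: the index bound in Step 1.} The key point is ensuring $\ind A_{L_i}\leq2$. From $(\rho_{L_i})_\an$ of dimension at most $4$ in $\I^2L_i$, this form is similar to a $2$-fold Pfister form (or zero) by the standard classification of $4$-dimensional forms of trivial discriminant. Its Clifford invariant is then a quaternion algebra, giving $\ind A_{L_i}\leq 2$. \Cref{main} covers both the orthogonal and symplectic cases with $(A,\s)\in\I^3$, so no case split is needed beyond checking the hypothesis ``$\in\I^n$'' in the sense of Section 6. This compatibility is noted after its definition.
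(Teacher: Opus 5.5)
Your proof is correct and is essentially the paper's proof: you write $[A]=c(\rho)$ via \Cref{Merkurjev}, use \Cref{dim4} to get $2$-extensions $L_1,L_2$ of degree at most $4$ with $a\in\N_{L_1/K}^\ast\N_{L_2/K}^\ast$ and $\ind A_{L_i}\leq 2$, and then apply \Cref{main} with $n=3$ over each $L_i$. You should delete the abandoned detours in Step 1 (the further quadratic extension and the $8$-dimensional form $\lla b\rra\otimes\pi$); your explicit remark that $u(K)\leq 8$ forces $K$, and hence each $L_i$, to be nonreal is worth keeping, since it gives total positivity of $A_{L_i}$ and weak hyperbolicity of $(A_{L_i},\s_{L_i})$, which the paper leaves implicit as ``in view of the hypotheses''.
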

\begin{proof}
Since $A$ carries a $K$-linear involution, we have that $[A]\in\Br_2(K)$. 
Hence \Cref{Merkurjev} yields that $A\sim \Cl(\rho)$ for some anisotropic quadratic form $\rho \in \I^2K$. 
By \Cref{dim4}, $\mg{K}$ is generated by the subgroups $\N_{K'/K}^\ast$ where $K'/K$ is a finite $2$-extension with $\dim((\rho_{K'})_\an)\leq 4$, and consequently $\ind(A_{K'})\leq 2$.
Let $K'/K$ be such a finite $2$-extension. 
By \Cref{In-vanish2ext}, we have that $\I^4K'=0$.
In view of the hypotheses, we obtain by \Cref{main}  that $\Hyp_2^\ast(A_{K'},\s_{K'})=\mg{K'}$.
Therefore $\N_{K'/K}^\ast\subseteq \Hyp_2^\ast(A,\s)$.
We conclude that $\mg{K}= \Hyp_2(A,\s)$.
\end{proof}

One may ask whether the equality $\Hyp(A,\s)=\G(A,\s)$ still holds (whereby ${\bf PSim}^+(A,\s)(K)/R$ would be trivial) for $K$-algebras with orthogonal involution $(A,\s)\in\I^2$ (as opposed to $\I^3$)  when $u(K)\leq 8$.
Assuming that the $u$-invariant of all finite extensions of $K$ is bounded by $8$ (which is the case when $K$ is the function field of a $p$-adic curve), we can reduce this problem to the relation between $\G(A,\s)$ and the reduced norms of the components of $\Cl(A,\s)$.

\begin{cor}
Assume that $u(K')\leq 8$ for every finite field extension $K'/K$.
Let $(A,\s)$ be a $K$-algebra with orthogonal involution with $(A,\s)\in\I^2$.
Let $C_1$ and $C_2$ be the two components of $\Cl(A,\s)$.
Then $$\Hyp(A,\s)=\Nrd_{C_1}^\ast\cdot\Nrd_{C_2}^\ast\,.$$
\end{cor}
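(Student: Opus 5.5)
The inclusion $\Hyp(A,\s)\subseteq\Nrd_{C_1}^\ast\cdot\Nrd_{C_2}^\ast$ holds in general by \Cref{G-in-Nrd}~$(a)$. So the plan is to prove the reverse inclusion $\Nrd_{C_i}^\ast\subseteq\Hyp(A,\s)$ for $i\in\{1,2\}$. The idea is to reduce, by passing to a splitting field of $C_i$, to a situation of trivial Clifford invariant, and then to apply \Cref{u8-trivial-clifford} over that field.

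Fix $i\in\{1,2\}$ and $a\in\Nrd_{C_i}^\ast$. By \Cref{Nrd*}~$(b)$ there is a finite field extension $L/K$ with $(C_i)_L$ split and $a\in\N_{L/K}^\ast$. One can take $L$ to be a maximal subfield of the division algebra Brauer equivalent to $C_i$, exactly as in the proof of \Cref{I4=0-deg2mod4Cliffindex4}. Since $\s$ has trivial discriminant, so does $\s_L$, and the components of $\Cl(A_L,\s_L)$ are $(C_1)_L$ and $(C_2)_L$. One of these, $(C_i)_L$, is split, so $(A_L,\s_L)$ has trivial Clifford invariant, that is, $(A_L,\s_L)\in\I^3$.

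By hypothesis $u(L)\leq 8$, because $L/K$ is finite. \Cref{u8-trivial-clifford} applied over $L$ then gives $\Hyp_2(A_L,\s_L)=\mg{L}$. Each generator of $\Hyp_2(A_L,\s_L)$ is a norm $\N_{M/L}(m)$ from a finite $2$-extension $M/L$ with $\s_M$ hyperbolic. Its norm down to $K$ is $\N_{M/K}(m)$, where $M/K$ is finite and $\s_M$ is hyperbolic, so it lies in $\Hyp^\ast(A,\s)$. Since $\N_{L/K}$ is multiplicative, it follows that
\[
\N_{L/K}(\Hyp_2(A_L,\s_L))\subseteq\Hyp(A,\s).
\]
Writing $a=\N_{L/K}(b)$ with $b\in\mg{L}=\Hyp_2(A_L,\s_L)$, we obtain $a\in\Hyp(A,\s)$. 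Since $\Hyp(A,\s)$ is a group, this yields $\Nrd_{C_1}^\ast\cdot\Nrd_{C_2}^\ast\subseteq\Hyp(A,\s)$.

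The main point requiring care is the choice of $L$. It need not be a $2$-extension, because $\ind C_i$ may be large, and this is exactly why the statement concerns $\Hyp$ rather than $\Hyp_2$ and why the hypothesis bounds the $u$-invariant of all finite extensions rather than only of $K$. One should also check that trivial Clifford invariant is stable under scalar extension with the components extending componentwise; this is standard (cf.~\cite[Prop.~8.31]{KMRT98}).
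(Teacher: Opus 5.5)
Your proposal is correct and is essentially the paper's own proof. Both arguments take the inclusion $\Hyp(A,\s)\subseteq\Nrd_{C_1}^\ast\cdot\Nrd_{C_2}^\ast$ from \Cref{G-in-Nrd}~$(a)$, pick for $a\in\Nrd_{C_i}^\ast$ a finite splitting field $L$ of $C_i$ with $a\in\N_{L/K}^\ast$, observe that $(A_L,\s_L)$ has trivial Clifford invariant, apply \Cref{u8-trivial-clifford} over $L$ using $u(L)\leq 8$, and transfer norms back to $K$. Your bookkeeping is slightly more precise than the paper's wording: \Cref{u8-trivial-clifford} gives the generated group $\Hyp_2(A_L,\s_L)=\mg{L}$, and multiplicativity and transitivity of norms then place $a$ in $\Hyp(A,\s)$, not in $\Hyp^\ast(A,\s)$.
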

\begin{proof}
By \Cref{G-in-Nrd}, we have $\Hyp(A,\s)
\subseteq\Nrd_{C_1}^\ast\cdot\Nrd_{C_2}^\ast$.
For the opposite inclusion, consider $i\in\{1,2\}$ and $a\in \Nrd_{C_i}^\ast$. 
Hence, there exists a finite field extension $L/K$ such that $(C_i)_L$ is split and $a\in \N_{L/K}^*$. 
Then $u(L)\leq 8$ and $(A_L,\s_L)$ is an $L$-algebra with orthogonal involution of even degree with trivial discriminant and trivial Clifford invariant, so $\Hyp^\ast(A_L,\s_L)=\mg{L}$, by \Cref{u8-trivial-clifford}, and $a\in\N_{L/K}(\Hyp(A_L,\s_L))\subseteq \Hyp(A,\s)$.
This shows that $\Nrd_{C_i}^\ast\subseteq \Hyp^\ast(A,\s)$ for $i\in\{1,2\}$. 
Hence $\Nrd_{C_1}^*\cdot \Nrd_{C_2}^*\subseteq \Hyp(A,\s)$. 
\end{proof}

\bibliographystyle{plain}

\end{document}